\newcommand{\norm}[1]{\left\lVert#1\right\rVert}
\def \N{\mathbb N}
\def \Z{\mathbb Z}
\def \R{\mathbb R}
\def \C{\mathbb C}
\def \P{{\mathbb P}}
\def \pa{{\partial}}
\def \Fc{{\mathcal F}}
\def \O{\mathcal{O}}
\def \T{\mathbb{T}}
\def \E{\mathbb{E}}
\numberwithin{equation}{section}
\theoremstyle{plain}
\newtheorem{thm}{Theorem}[section]
\newtheorem{lem}[thm]{Lemma}
\newtheorem{prop}[thm]{Proposition}
\newtheorem{cor}[thm]{Corollary}
\theoremstyle{definition}
\theoremstyle{remark}
\newtheorem{rk}[thm]{Remark}
\theoremstyle{plain}
\theoremstyle{remark}
\theoremstyle{plain}
\newcommand{\ee}[1]{{\mathbb E}\left[ #1 \right]}
\newcommand{\pp}[1]{{\mathbb P}\left(#1\right)}
\title{Large deviations principle for the cubic NLS equation}
\author[M. A. Garrido]{Miguel Angel Garrido}
\address{Department of Statistics, Columbia University, New York, NY}
\email{mag2319@columbia.edu}
\author[R. Grande]{Ricardo Grande}
\address{D\'epartement de math\'ematiques et applications, \'Ecole normale sup\'erieure, CNRS, PSL University, 75005 Paris, France}
\email{ricardo.grande@ens.fr} 
\thanks{R. Grande  was  supported by the  Simons Collaboration Grant on Wave Turbulence}
\author[K. M. Kurianski]{Kristin M. Kurianski}
\address{Department of Mathematics, California State University Fullerton, Fullerton, CA}
\email{kkurianski@fullerton.edu}
\author[G. Staffilani]{Gigliola Staffilani}
\address{Department of Mathematics, Massachusetts Institute of Technology, Cambridge, MA}
\email{gigliola@math.mit.edu}
\thanks{G. Staffilani was  supported by the NSF grant 
DMS-1764403, DMS-2052651 and the Simons Collaboration Grant on Wave Turbulence}
\begin{document}
	
	\begin{abstract}
In this paper, we present a probabilistic study of rare phenomena of the cubic nonlinear Schr\"odinger equation on the torus in a weakly nonlinear setting. This equation has been used as a model to numerically study the formation of rogue waves in deep sea. Our results are twofold: first, we introduce a notion of criticality and prove a Large Deviations Principle (LDP) for the subcritical and critical cases. Second, we study the most likely initial conditions that lead to the formation of a rogue wave, from a theoretical and numerical point of view. Finally, we propose several open questions for future research.
\end{abstract}

	\maketitle

\section{Introduction}

\subsection{Motivation and background}

In the last few decades there has been considerable research in the field of dispersive equations, and particularly on the nonlinear Schr\"odinger (NLS) equation:
\begin{equation}\label{eq:intro_NLS}
\begin{cases}
 i\pa_t u + \Delta u = \mu \, |u|^2\, u, \qquad \mu=\pm 1,\\
 u|_{t=0}=u_0\, .
 \end{cases}
\end{equation}
One of the most fundamental questions is that of existence and uniqueness of solutions. The question of well-posedness on $\R^d$ was extensively studied in the 1980s and the 1990s for subcritical initial data  \cite{CazenaveWeissler,GinibreVelo,Kato,Tsutsumi} and later for energy and mass critical initial data \cite{B-crit, Gri, I-team, Vis, Dod}.
Well-posedness on the torus $\T^d$ received much attention in the 1990s, mainly due to Bourgain's breakthrough paper \cite{Bourgain-lwp}, which was followed by  work in  \cite{Herr-all, Io-Pa, Yue}.
These results are \emph{deterministic}, in the sense that one shows that a solution exists for \emph{every} initial datum $u_0$ in a certain space, typically Sobolev spaces such as $H^s$ where the regularity $s$ must be larger or equal to a critical value $s_c$.

Beyond existence of solutions, it is important to understand the long-time behaviour of such solutions. In $\R^d$ solutions to \eqref{eq:intro_NLS} with $\mu=1$ and in the critical and subcritical case scatter (sometimes under some technical conditions), see for example \cite{GinibreVelo, I-team, Vis, Dod, TVZ, YX}, while in the super critical case blow-up may occur, see the recent work \cite{MRRS}. In the case $\mu=-1$, some solutions to \eqref{eq:intro_NLS} blow up in finite time, and therefore they cannot be extended indefinitely, see for example  \cite{MR-blow, MRRS}. In contrast to this generic behavior, certain types of 
initial data give rise to traveling waves and soliton solutions, see \cite{Alejo} and the references therein. These solutions display an unusual long-term behavior, and as a result they are considered \emph{rare phenomena}.

On the torus $\T^d$, scattering is not possible so one expects different asymptotics. In particular, it is of great physical interest  to analyse  phenomena related to   weak turbulence, and more  precisely that  of  energy transfer. With this we mean the expected behaviour of certain initial data with frequency localised support that  evolve into solutions whose support lives mostly in higher frequencies. This phenomenon is connected to the question of asymptotic growth of Sobolev norms of solutions to the Schr\"odinger equation whose study was started by Bourgain \cite{Bourgain-growth} and later continued in \cite{Sta-p, Soh, StW, DG, Deng, HPSW, PTV17, BertiMaspero, CarlesFaou, Iteam, GHHMP}  among others. In addition to such theoretical results, there are many interesting simulations and experimental results about  transfer of energy to high frequencies, see for example  \cite{MMT, CSS, HP, HPSW}.

In 1988, the work of Lebowitz, Rose, and Speer \cite{LRS}, followed by Bourgain's papers on invariant measures\footnote{There have been many recent advances on this topic, see \cite{Deng-Nahmod-Yue,Deng-Nahmod-Yue-2} and the references therein.}  \cite{Bourgain-inv1,Bourgain-inv2}, started the probabilistic study of solutions to the nonlinear Schr\"odinger equation in a context in which until then harmonic and Fourier analysis had been the predominant and preferred tools of investigation. In such works, one typically considers random initial data, whose Fourier coefficients are normally distributed. This viewpoint has been extensively used to obtain \emph{generic} results which hold for  almost every initial datum (i.e. almost surely). For example, this additional flexibility led in many cases to an improved well-posedness theory, see for example \cite{DLM, Deng-Nahmod-Yue,Deng-Nahmod-Yue-2}. The random data theory has also been fundamental in the study of the long-term behaviour of solutions to \eqref{eq:intro_NLS} on $\T^d$ that we mentioned above. Indeed, one of the most interesting lines of research in this direction is the derivation of the Wave Kinetic Equation (WKE), which connects the study of the {\it typical} size of the Fourier coefficients of the solution to \eqref{eq:intro_NLS} to the study of solutions to a kinetic equation \cite{BGHS, CollotGermain, CollotGermain2, DengHani,DengHani2, StaffilaniTran}. All such works make the most of probabilistic estimates to argue that non-generic behavior is unlikely, but few of them focus on the study of such non-generic behavior.

In this paper, we study rogue waves as an interesting and still not well understood example of rare phenomena. Oceanographers generally define rogue waves to be deep-water waves whose amplitude exceeds twice the characteristic wave height expected for the given surface conditions \cite{VandenEijnden,Dysthe-review}. Over the past decades, there has been considerable interest in modelling rogue waves, both because analytically their origins are still mysterious and  because their  sudden appearances   pose a threat for a variety of naval infrastructure.

Several evolution equations have been classically used as models for the study of rogue waves. Two of the most popular models are given by the cubic Schr\"odinger equation and the Dysthe equation. The Dysthe equation has been the subject of many experimental and computational works, for instance \cite{VandenEijnden,Dysthe,FarazmandSapsis}; as well as some theoretical results, e.g. \cite{GKS,Saut}. A few variants of the NLS equation have also been used to model rogue wave formation, see \cite{OnoratoVE,Onorato-mechanism} among others. Note that all these models are derived from the incompressible Navier-Stokes equation, after  truncating an asymptotic expansion of the modulation of a wavetrain at various orders \cite{Dysthe,Saut}. 

There are several theories to explain the precise mechanism for the formation of rogue waves \cite{Onorato-mechanism}, but this phenomenon is still the subject of much debate. In the case of the focusing NLS equation in $\R$, Bertola and Tovbis obtained a precise description of a deterministic class of solutions near their peak \cite{BertolaTovbis}. One of the goals of this paper is to provide a characterization of rogue waves of the weakly nonlinear NLS equation on $\T=[0,2\pi]$ with random initial data.

The results in this paper are inspired by some recent work of Dematteis, Grafke, Onorato, and Vanden-Eijnden \cite{OnoratoVE,VandenEijnden,VandenEijnden2}. In these papers, the authors conjecture the existence of a Large Deviation Principle that may be used to predict and study the formation of rogue waves. More precisely, they consider an equation akin\footnote{Technically, the equations in \cite{OnoratoVE} and \cite{VandenEijnden} are different, but the methods and the underlying theory that the authors propose are similar, see \cite{VandenEijnden2}. Interestingly, their numerics and experimental results seem to hold regardless of the equation, with some natural modifications.} to \eqref{eq:intro_NLS} with $\mu=-1$ on a circle of fixed size $L$, with random initial data $u_0$
%whose Fourier coefficients are normally distributed in such a way that most solutions to the equation have a similar height which can be normalised to be 1. Loosely speaking, a rogue wave of height $z>0$ at time $t>0$ would naturally belong to the \blue{event} \red{(What are the elements of this set? Solutions? $\norm{u(t)}_{L^{\infty}_x}$ is a random variable so this is just an event in the corresponding probability space.)}
%\begin{equation}\label{Dtz}
%\mathcal{D}(t,z):=\lbrace\ \sup_{x\in [0,L]} |u(t,x)|>z \ \rbrace .
%\end{equation}
of the form
\begin{equation}\label{eq:intro_dataVE}
u_0(x)=\sum_{|k|\leq N} \vartheta_k e^{ik\cdot x}.
\end{equation}
Here, the set of initial data is parametrized by $\vartheta=(\vartheta_k)_{|k|\leq N}\in \C^{2N+1}$. In order to make $u_0$ random, the authors endow the space of parameters $\C^{2N+1}$ with a probability measure such that $\left\{\vartheta_k\right\}_{|k|\le N}$ are independent and identically distributed (i.i.d.) complex Gaussian random variables with $\E \vartheta_k= \E\vartheta_k^2=0$ and $\E|\vartheta_k|^2=c_k^2$, for some fast-decaying $c_k>0$. 

For $t>0$ and $z>0$ fixed, the set of initial data that generate a rogue wave of height at least $z$ at time $t$ is given by
\begin{equation}\label{Dtz}
\mathcal{D}(t,z):=\left \lbrace (\vartheta_k)_{|k|\leq N}\in \C^{2N+1} \Big| \sup_{x\in [0,L]} |u(t,x \, | \, \vartheta)| >z \ \right \rbrace 
\end{equation}
where $u$ is the solution to the equation with initial data \eqref{eq:intro_dataVE}.

In order to quantify the likelihood of $\mathcal{D}(t,z)$, Dematteis et al.\ propose a theoretical framework that allows them to conclude a large deviations principle (LDP), provided that the minimization problem
\begin{equation}\label{eq:intro_minimizerVE}
\vartheta^{\ast}(z)= \underset{\vartheta\in\mathcal{D}(t,z)}{\mathrm{argmin}} \,\, \mathcal{I}(\vartheta).
\end{equation}
has a unique solution, where
\[
\mathcal{I}(\vartheta)  = \max_{y\in\C^{2N+1}} [\,\langle y,\vartheta\rangle - S(y) ] \qquad \mbox{and}\quad 
S(y) = \log \E e^{\langle y, \vartheta\rangle}.
\]
In particular, for a fixed $t>0$ and as $z\rightarrow \infty$, they claim that
\begin{equation}\label{eq:intro_LDP_VE}
\log \P ( \mathcal{D}(t,z)) = -\mathcal{I}(\vartheta^{\ast}(z))+ o(1).
\end{equation}

The goal of this paper is to prove rigorously conjecture \eqref{eq:intro_LDP_VE} in the case of the NLS equation with a weak nonlinearity. The application of 
the theoretical framework proposed by Dematteis et al.\ requires verifying a set of strong assumptions that are very difficult to check in practice, such as bounds on the gradient $\nabla_{\vartheta}\sup_{x\in [0,L]} |u(t,x \, | \, \vartheta)|$ or the convexity of the set $\mathcal{D}(t,z)$, so that the existence of a unique minimizer for \eqref{eq:intro_minimizerVE} is guaranteed.

%(See \Cref{sec:MLE}, \Cref{rk:NonUniquenessMinimizer} for further details.)

\noindent For these reasons, we follow a different approach that allows us to establish an LDP of the form
\begin{equation}\label{eq:intro_ourLDP}
\log \P \left( \sup_{x\in\T} |u(t,x)|>z\right) = - I(z) +o(1), \quad \mbox{as}\ z\rightarrow\infty ,
\end{equation}
where we can derive the rate function $I$ without having to solve the minimization problem in \eqref{eq:intro_minimizerVE}. In \eqref{eq:intro_ourLDP}, $u$ is the solution to the equation with initial data of the form \eqref{eq:intro_dataVE} with an infinite number of Fourier modes ($N=\infty$). See \Cref{thm:mainLDP} below for a precise statement.

In the second part of the paper, we investigate the following question: ``Conditioned on the fact that we observe a rogue wave of height at least $z >0$ at time $t >0$, what is the most likely initial datum that generates such phenomenon?'' To answer this question, we recover the minimization problem given by the right-hand side of \eqref{eq:intro_minimizerVE}, and show that it admits a two-parameter family of solutions. In addition, we can construct neighborhoods $\mathcal{U}(t,z)$ around the family of minimizers whose probability is almost identical to that of $\mathcal{D}(t,z)$. See \Cref{thm:MLE} below for a precise statement.

%Dematteis et al. then conjecture that, for fixed $t>0$ and as $z\rightarrow\infty$, the probability that a solution to said equation is a rogue wave of height $z$ satisfies a large deviation principle (LDP) of the form
%\begin{equation}\label{eq:intro_LDP_VE}
%\P\left( \sup_{x\in [0,L]} |u(t,x)|>z\right) = e^{-I(t,z)}
%\end{equation}
%\blue{for some unknown function $I(t,z)$} \red{(What is $I(t,z)$? They never say)}. Moreover, they claim that this probability is dominated by a special solution given by some initial datum $u_0^{\ast}$ which is formally a minimizer to a certain optimization problem of the form:
%\begin{equation}\label{eq:intro_minimizer}
%I(t,z) = \frac{1}{2}\, \min_{u_0 \in\mathcal{D}(t,z)} \norm{u_0}_{\mathrm{C}}^2,
%\end{equation}
%where $\mathcal{D}(t,z)$ was defined in \eqref{Dtz}, and $\norm{\cdot}_{\mathrm{C}}$ may be explicitly found in \cite{VandenEijnden} . These conjectures \red{(Why plural? In my view \eqref{eq:intro_LDP_VE} and \eqref{eq:intro_minimizer} are two separate conjectures, but let's see what Miguel thinks.)} are supported numerically and experimentally. In fact, one of the most interesting aspects of this work is the predictive power of the minimizer $u_0^{\ast}$ of \eqref{eq:intro_minimizer}. Their simulations show that most solutions living in $\mathcal{D}(t,z)$ (where $t,z$ take various values) are reasonably small perturbations of the solution given by $u_0^{\ast}$, especially near the rogue wave.
 
\subsection{Statement of results}

In this paper, we focus on the cubic Schr\"odinger equation on the torus $\T=[0,2\pi]$ with a weak nonlinearity:
\begin{equation}\label{eq:intro_weakNLS}
\begin{cases}
i\pa_t u + \Delta u = \mu \, \varepsilon^{\alpha} |u|^2\, u\\
u(t,x)|_{t=0}=u_0
\end{cases}
\end{equation}
for  $\alpha\geq 0$ and $\mu=\pm 1$, and where the parameter $\varepsilon>0$ controls the size of the nonlinearity. We consider 
random initial data of the form:
\begin{equation}\label{eq:intro_data}
    u_0 (x)= \sum_{k\in\Z} c_k \eta_k e^{ikx}
\end{equation}
where
\begin{enumerate}[(1)]
    \item the coefficients have the form $c_k=a\,e^{-b|k|}$, or $c_k=a\,e^{-b|k|^2}$, for all $k\in\Z$, where $a,b>0$ are fixed; and
    \item $\{\eta_k\}_{k\in\Z}$ are independent, identically distributed, complex Gaussian random variables with $\E\eta_k=0$, $\E\eta_k\eta_j=0$ and $\E \eta_k \overline{\eta}_j =\delta_{kj}$ for $k,j\in\Z$.
\end{enumerate}

Assumption (1) is based on the choice of coefficients in \cite{VandenEijnden}, but our results extend to other families of coefficients, see \Cref{rk:coeff}. In the sequel we assume that $a=1$ since we can rescale the equation if that is not the case.

Our first result is a rigorous proof of \eqref{eq:intro_ourLDP} for solutions to \eqref{eq:intro_weakNLS} with $\alpha=2$ and initial data of the form \eqref{eq:intro_data}.

\begin{thm}[Large Deviation Principle]\label{thm:mainLDP} Consider the NLS equation on the circle $\T=[0,2\pi]$:
\begin{equation}\label{eq:NLS_mainLDP}
\begin{cases}
i\pa_t u + \Delta u = \varepsilon^2 |u|^2\, u \\
u(0,x)=\sum_{k\in\Z} c_k \eta_k e^{ik x} 
\end{cases}
\end{equation}
with initial data as in \eqref{eq:intro_data}. Consider the probability of seeing a large wave of height $z(\varepsilon):=z_0\, \varepsilon^{-1/2}>0$ at time $t>0$ and fixed $z_0>0$. If $t\lesssim \varepsilon^{-1}$ we have that
\begin{equation}\label{eq:mainLDP}
\lim_{\varepsilon\rightarrow 0^{+}} \varepsilon\,\log \P \left( \sup_{x\in\T} |u(t,x)| > z_0 \, \varepsilon^{-1/2} \right) =- \frac{z_0^2}{\sum_{k\in\Z} c_k^2}.
\end{equation}
\end{thm}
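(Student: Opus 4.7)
The plan is to approximate $u$ by the free Schr\"odinger evolution of the random initial datum and reduce the problem to a classical Gaussian LDP. Write $u(t,x)=u_L(t,x)+w(t,x)$, where $u_L(t,x)=e^{it\Delta}u_0(x)=\sum_{k\in\Z}c_k\eta_k e^{i(kx-k^2 t)}$ and, by Duhamel,
\[
w(t)=-i\varepsilon^2\int_0^t e^{i(t-s)\Delta}\bigl(|u|^2 u\bigr)(s)\,ds.
\]
I aim to show both (i) that $u_L$ satisfies $\varepsilon\log\P(\sup_x|u_L(t,x)|>z_0\varepsilon^{-1/2})\to -z_0^2/\sigma^2$ with $\sigma^2:=\sum_k c_k^2$, and (ii) that $\|w(t)\|_{L^\infty_x}$ is subdominant to $z_0\varepsilon^{-1/2}$ on an event whose complement has probability exponentially smaller than $e^{-z_0^2/(\varepsilon\sigma^2)}$.

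For (i), the crucial observation is that $\{e^{-ik^2 t}\eta_k\}_k$ has the same joint law as $\{\eta_k\}_k$, so the distribution of $u_L(t,\cdot)$ on $\T$ is independent of $t$: a centered complex Gaussian process with pointwise variance $\sigma^2$. Since $u_L(t,x_0)\sim\mathcal{CN}(0,\sigma^2)$, the exact tail $\P(|u_L(t,x_0)|>z)=e^{-z^2/\sigma^2}$ yields the lower bound $\P(\sup_x|u_L(t,x)|>z)\ge e^{-z^2/\sigma^2}$. The matching upper bound follows by applying the Borell--TIS inequality to the real Gaussian process $(x,\theta)\mapsto\operatorname{Re}(e^{-i\theta}u_L(t,x))$, which has pointwise variance $\sigma^2/2$ and whose supremum equals $\sup_x|u_L(t,x)|$; since $\E\sup_x|u_L|=O(\sqrt{\log z})$ is negligible at the scale $z=z_0\varepsilon^{-1/2}$, one obtains $\P(\sup_x|u_L|>z)\le\exp(-z^2/\sigma^2(1+o(1)))$.

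For (ii), the plan is a Duhamel bootstrap driven by Gaussian concentration. First, extend the tail above to a space--time bound $\P(\sup_{s\in[0,t],\,x\in\T}|u_L(s,x)|>K)\le\exp(-K^2/\sigma^2(1+o(1)))$ by time-discretizing with step controlled by $\|\partial_s u_L\|_{L^\infty_x}\le \sum_k k^2 c_k|\eta_k|$ (finite a.s.\ thanks to the fast decay of $c_k$) and applying Borell--TIS on each slice. On the favorable event $\{\sup_{s,x}|u_L|\le K\}$, Duhamel together with $\|u\|_{L^\infty_{s,x}}\le \|u_L\|_{L^\infty_{s,x}}+\|w\|_{L^\infty_{s,x}}$ and $\|w(t)\|_{L^\infty_x}\le \varepsilon^2 t\,\|u\|_{L^\infty_{s,x}}^3$ closes a bootstrap giving $\|u\|_{L^\infty_{s,x}}\lesssim K$ and $\|w(t)\|_{L^\infty_x}\lesssim \varepsilon^2 t K^3\lesssim \varepsilon K^3$ for $t\lesssim\varepsilon^{-1}$. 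Choosing $K=c\varepsilon^{-1/2}$ with $c>z_0$ makes the failure probability $e^{-c^2/(\varepsilon\sigma^2)}$ subdominant to the target rate $e^{-z_0^2/(\varepsilon\sigma^2)}$. The pieces are then combined through
\[
\bigl\{\sup_x|u(t,x)|>z_0\varepsilon^{-1/2}\bigr\}\subset\bigl\{\sup_x|u_L(t,x)|>(z_0-\delta)\varepsilon^{-1/2}\bigr\}\cup\bigl\{\|w(t)\|_{L^\infty_x}>\delta\varepsilon^{-1/2}\bigr\}
\]
and its mirror inclusion for the lower bound, sending $\delta\downarrow 0$ at the end.

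The main obstacle is closing the bootstrap for $w$ at the correct order: if one allowed $\|u\|_{L^\infty_x}$ to be of the critical size $\varepsilon^{-1/2}$, a naive Duhamel bound would yield $\|w\|_{L^\infty_x}\sim \varepsilon^2\cdot\varepsilon^{-1}\cdot\varepsilon^{-3/2}=\varepsilon^{-1/2}$, the same scale as $u$ itself, which would destroy the LDP rate. The argument can succeed only because the LDP scale permits us to pay a probabilistic cost $e^{-(c^2-z_0^2)/(\varepsilon\sigma^2)}$ (arbitrarily close to the target rate as $c\downarrow z_0$) to truncate $\|u_L\|_{L^\infty}\le c\varepsilon^{-1/2}$, and the constraint $t\lesssim\varepsilon^{-1}$---with implicit constant possibly depending on $z_0$---is exactly what keeps $\varepsilon^2 t K^3$ admissible. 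Making the ratio $\|w(t)\|_{L^\infty_x}/\varepsilon^{-1/2}$ tend to $0$ uniformly in $\delta$, as the $\delta\downarrow 0$ step formally requires, is the quantitative point that the proof must handle, probably through an iterative Duhamel scheme or the extraction of a gauge/normal-form correction whose phase-rotation piece does not affect the modulus.
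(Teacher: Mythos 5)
Your treatment of the linear LDP via Borell--TIS is a legitimate alternative to the paper's route (which bounds $\sup_x|u_L|\le\sum_k c_k R_k$ and applies G\"artner--Ellis), and the lower bound via the pointwise Rayleigh law is identical. However, the reduction of the nonlinear problem to the linear one has two genuine gaps.

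First, the inequality $\|w(t)\|_{L^\infty_x}\le\varepsilon^2 t\,\|u\|_{L^\infty_{s,x}}^3$ used to close the bootstrap is false, because the Schr\"odinger propagator $e^{i(t-s)\Delta}$ is \emph{not} bounded on $L^\infty(\T)$. A Fourier multiplier $\sum_k a_k e^{ikx}\mapsto\sum_k a_k e^{ikx-ik^2(t-s)}$ can take a function with small $L^\infty$ norm (due to cancellations) into one with much larger $L^\infty$ norm (when the phases align), and this phase-alignment mechanism is precisely what drives the large waves being studied. One must instead carry out the Duhamel/bootstrap argument in a Banach algebra that embeds continuously in $L^\infty$ and on which $e^{it\Delta}$ acts isometrically; the paper uses the Fourier--Lebesgue space $\Fc L^{0,1}$ (and later $\Fc L^{2,1}$), whose norm is a weighted $\ell^1$ of Fourier coefficients, so the propagator leaves the norm invariant. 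Your $L^\infty$ argument cannot be repaired as written without moving to such a space.

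Second, even after fixing the function space, the linear approximation alone cannot reach the critical regime $t\sim\varepsilon^{-1}$. In $\Fc L^{0,1}$ the Duhamel bound gives $\|u(t)-e^{-it\Delta}u_0\|_{\Fc L^{0,1}}\lesssim\varepsilon^2 t\,\varepsilon^{-3/2}$ on the good event; at $t=d_1\varepsilon^{-1}$ this is $\sim\varepsilon^{-1/2}$, the \emph{same} order as the signal, so the $\delta\downarrow0$ step cannot be carried out and the rate function is not recovered. You acknowledge this scaling obstruction but propose to absorb it into the implicit constant in $t\lesssim\varepsilon^{-1}$; that would weaken the theorem to $t\ll\varepsilon^{-1}$, not prove the stated result. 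The paper handles the critical case by replacing the linear flow with a \emph{resonant approximation} $u_{\mathrm{app}}(t,x)=e^{2it\varepsilon^2 M}\sum_k c_k\eta_k e^{ikx+i\varepsilon^2 tc_k^2|\eta_k|^2-itk^2}$, which is an exact solution of the resonant truncation of the equation, captures the $O(\varepsilon^{-1})$-time dynamics, and (via the rotation-invariance of the law of $\eta_k$, Lemma~\ref{thm:GaussianInvariance}) has exactly the same pointwise Gaussian distribution as $u_L$, hence the same LDP. The error $u-u_{\mathrm{app}}$ is then shown to be $O(\varepsilon^{-1/2+\delta})$ in $\Fc L^{2,1}$ for some fixed $\delta>0$ by a multi-step bootstrap with integration by parts in time to exploit the nonresonance $\Omega\neq0$. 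Without this ingredient the proof does not reach critical times.
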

\begin{rk}
The requirement that $t\lesssim \varepsilon^{-1}$ is connected to the criticality of the problem. This notion of criticality is discussed in \Cref{rk:criticality}. The proof of \Cref{thm:mainLDP} is based on approximating the solution $u$ by a function that satisfies \eqref{eq:mainLDP}. For subcritical times $t\ll \varepsilon^{-1}$, a linear approximation suffices. For critical times $t=\O (\varepsilon^{-1})$, the linear approximation is not enough, and we must use a resonant approximation instead.
\end{rk}

\begin{rk}\label{rk:coeff} The coefficients $c_k$ given in \eqref{eq:intro_data} are based on those in \cite{VandenEijnden}, which are chosen to model observations in the North Sea. Finding the optimal family of coefficients (in terms of decay) so that \eqref{eq:mainLDP} holds is an interesting open question.
\end{rk}
\begin{rk}
We have stated \Cref{thm:mainLDP} for the defocusing equation, but our results extend to the focusing equation. In fact the sign of the nonlinearity has no effect in our results since we are in a weakly nonlinear regime. As we will show below, our results are strongly based on the special form that a linear solution takes in the 1D setting, as well as the resonant set in this dimension. We speculate though that a possible extension to a 2D setting may be possible for irrational tori. There in fact, it has been proved \cite{HPSW} that the irrationality of the torus completely decouples  the resonant set into two 1D resonant sets, one for each  coordinate. 
\end{rk}

\begin{rk} The cubic NLS equation on $\T$ is an integrable system, however our results are related to neither integrability nor blow-up. We note that our  initial data live in $\cap_{s>0} H^s (\T)$ almost surely\footnote{By Bourgain's work \cite{Bourgain-lwp} one first obtains local well-posedness in $L^2$ in an interval of time depending on the $L^2$ norm of the initial data, then by using conservation of mass, the local 
well-posedness and preservation of regularity  guarantee the result for all times in $\cap_{s>0} H^s (\T)$. More on this in Remark \ref{rk:GWP} below.}, thus a unique global solution exists almost surely. The question of rogue waves is related to the $L^{\infty}(\T)$-norm of the solutions and their growth over time. Most solutions to the equation will barely grow over time. As shown in \Cref{sec:MLE}, rogue waves will typically peak at a certain point in space and time and decrease in size quickly after.
\end{rk}

The factor of $\varepsilon^2$ in front of the cubic nonlinearity in  \Cref{thm:mainLDP} is motivated by the standard notation in the PDE literature. However, our techniques can handle the more general scenarios presented below. For the sake of concreteness, we will limit ourselves to the setting in \Cref{thm:mainLDP} in the rest of the paper.

\begin{thm}\label{thm:generalLDP} Consider the NLS equation on the torus $\T=[0,2\pi]$:
\[
\begin{cases}
i\pa_t u + \Delta u = \varepsilon^{\alpha}\, |u|^2\, u\\
u(0,x)=\sum_{k\in\Z} c_k \eta_k e^{ik x} ,
\end{cases}
\] 
where $\alpha>0$, and initial data as in \eqref{eq:intro_data}. Consider the probability of seeing a large wave of height $z(\varepsilon):=z_0 \varepsilon^{-1/2}$ at time $t\sim \varepsilon^{-\beta}$ for some $\beta\in\R$. Then we have that:
\begin{itemize}
\item If $\alpha-1>\beta$, then \eqref{eq:mainLDP} holds.
\item If $\alpha-1 =\beta$ and $\beta>0$, then \eqref{eq:mainLDP} holds.
\end{itemize}
\end{thm}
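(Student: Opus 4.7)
Since both parts of the theorem target the same rate function $z_0^2/\sum_k c_k^2$ already established in \Cref{thm:mainLDP}, the plan is to reuse the approximation scheme of that theorem and verify that its error bounds have enough slack in the more general $(\alpha,\beta)$ regime. The starting point is that the linear flow $u^L(t,x):=\sum_k c_k\eta_k e^{i(kx-k^2 t)}$ is stationary in $t$: multiplication of each $\eta_k$ by $e^{-ik^2 t}$ preserves the distribution, so $u^L(t,\cdot)\stackrel{d}{=}u_0$ for every $t\in\R$. By Gaussian concentration on $L^\infty(\T)$,
\begin{equation*}
\lim_{\varepsilon\to 0^+}\varepsilon\log\P\Bigl(\sup_{x\in\T}|u^L(t,x)|>z_0\varepsilon^{-1/2}\Bigr)=-\frac{z_0^2}{\sum_k c_k^2},
\end{equation*}
uniformly in $t$, so it is enough to approximate $u$ by $u^L$ (subcritical) or by a resonant variant $u^{\mathrm{res}}$ (critical) with remainder $o(\varepsilon^{-1/2})$ on an event of probability $1-e^{-C/\varepsilon}$ for $C$ larger than $z_0^2/\sum_k c_k^2$.

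In the subcritical regime $\alpha-1>\beta$, I would write $u=u^L+w$ with
\begin{equation*}
w(t)=-i\varepsilon^\alpha\int_0^t e^{i(t-s)\Delta}\bigl(|u|^2 u\bigr)(s)\,ds
\end{equation*}
and run a Picard bootstrap in $H^s$ for some $s>1/2$. On the event $\{\sup_x|u_0|\le K\varepsilon^{-1/2}\}$ with $K$ chosen just above $z_0$, whose complement has probability $\sim e^{-K^2/(\varepsilon\sum_k c_k^2)}$, the algebra property of $H^s$ combined with the unitarity of $e^{i\tau\Delta}$ on $H^s$ gives
\begin{equation*}
\|w\|_{L^\infty([0,t]\times\T)}\lesssim \varepsilon^\alpha t\cdot (K\varepsilon^{-1/2})^3\lesssim K^3\varepsilon^{\alpha-\beta-3/2}=o(\varepsilon^{-1/2}),
\end{equation*}
where the last equality uses $\alpha-1>\beta$. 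The logarithmic-equivalence argument from \Cref{thm:mainLDP} then transfers the LDP of $u^L$ to $u$.

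In the critical regime $\alpha-1=\beta>0$ one has $\varepsilon^\alpha t=\varepsilon$, and on the event $\{\sup_x|u|>z_0\varepsilon^{-1/2}\}$ at least one $|\eta_k|^2$ is of order $\varepsilon^{-1}$, making the diagonal-resonant phase $\varepsilon^\alpha c_k^2|\eta_k|^2 t$ of order one. Accordingly I would replace $u^L$ by
\begin{equation*}
u^{\mathrm{res}}(t,x)=e^{-2i\varepsilon^\alpha M t}\sum_{k\in\Z}c_k\eta_k\exp\bigl(i\varepsilon^\alpha c_k^2|\eta_k|^2 t-ik^2 t\bigr)e^{ikx},\qquad M:=\sum_j c_j^2|\eta_j|^2,
\end{equation*}
obtained by restricting the cubic nonlinearity to the diagonal resonances $\{k_1=k\}\cup\{k_3=k\}$ and solving the resulting ODE. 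The crucial point is distributional: writing $\eta_k=|\eta_k|e^{i\theta_k}$ with $\theta_k$ uniform and independent of $|\eta_k|$, the per-mode rotation by $\varepsilon^\alpha c_k^2|\eta_k|^2 t$ is $\sigma(|\eta_j|:j\in\Z)$-measurable and acts identically at every $x$, so by rotation invariance of $(\theta_k)_k$ we have $u^{\mathrm{res}}(t,\cdot)\stackrel{d}{=}u^L(t,\cdot)$. The non-resonant remainder $u-u^{\mathrm{res}}$ is controlled by integration by parts against the oscillatory factor $e^{is(k_1^2-k_2^2+k_3^2-k^2)}$ (a normal-form transformation), which produces a bound $\lesssim \varepsilon^\alpha(\log\varepsilon^{-1})^C$ uniformly on $[0,C\varepsilon^{-\beta}]$, again $o(\varepsilon^{-1/2})$.

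The main obstacle is quantitative rather than structural: transferring an LDP at rate $e^{-C/\varepsilon}$ requires the remainder bounds above to hold not merely with high probability but with probability $1-e^{-C'/\varepsilon}$ for some $C'>C$. This forces the deterministic Duhamel and normal-form estimates to be upgraded to concentration estimates for multilinear Gaussian polynomials (typically via hypercontractivity on Wiener chaos), precisely as in the proof of \Cref{thm:mainLDP}. Since all error terms depend on $(\alpha,\beta)$ only through $\varepsilon^\alpha t\sim \varepsilon^{\alpha-\beta}$, the estimates from \Cref{thm:mainLDP} transfer verbatim under $\alpha-1\ge\beta$, with strict inequality yielding the linear regime and equality (with $\beta>0$, so that the resonant phase does not trivially vanish) yielding the resonant one.
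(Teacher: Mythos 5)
Conceptually your plan matches the paper's: linear approximation for subcritical times, resonant approximation (with the key distributional observation that per-mode rotations by $\varepsilon^\alpha c_k^2|\eta_k|^2 t$ preserve the complex Gaussian law, which is the paper's \Cref{thm:GaussianInvariance}) for critical times, a bootstrap to control the error, and log-equivalence to transfer the LDP. However, the conditioning step in your subcritical argument has a genuine gap. The event $\{\sup_x|u_0|\le K\varepsilon^{-1/2}\}$ gives no control on an algebra norm of $u_0$, since the embedding only goes one way ($\norm{u_0}_{L^\infty}\lesssim\norm{u_0}_{H^s}$, not the reverse), so the $H^s$ Picard iteration cannot be initialized from it; the cube of an $L^\infty$-bounded function need not be bounded in any Sobolev norm uniformly. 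The paper conditions instead on $\{\norm{u_0}_{\Fc L^{0,1}}\le\varepsilon^{-1/2-\delta}\}$ (and $\{\norm{u_0}_{\Fc L^{2,1}}\le d_2\varepsilon^{-1/2}\}$ with $d_2$ large for the critical case), since $\Fc L^{s,1}$ is both an algebra and embedded in $L^\infty(\T)$; the complement has log-probability of order $-\varepsilon^{-1-2\delta}$ (resp.\ $-d_2^2/\varepsilon$ with $d_2$ large enough), which is subleading. Your threshold $K$ ``just above $z_0$'' is also tighter than necessary and would force sharper constants in the bootstrap; a threshold growing slightly faster than $\varepsilon^{-1/2}$ is both easier and costs nothing at the LDP scale.

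Your closing paragraph also misattributes what the paper needs: it does not invoke hypercontractivity or multilinear Gaussian concentration. Once the initial datum is conditioned to lie in a ball of a Banach algebra, the Duhamel/normal-form estimates are entirely deterministic, and the only probabilistic input is the one-dimensional tail of $\norm{u_0}_{\Fc L^{0,1}}=\sum_k c_k R_k$ (or $\norm{u_0}_{\Fc L^{2,1}}$, which the paper bounds by a sum of the same type with $b$ halved). That quantity is a sum of independent Rayleigh variables, exactly what the G\"artner--Ellis analysis in \Cref{sec:linearLDP} already treats, so no Wiener-chaos machinery is required. This is a structural simplification, and recognizing it is the main point your proposal misses: the randomness enters only through the initial data, never through the nonlinear iterates.
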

\begin{rk}\label{rk:criticality}
Let us motivate the range of parameters in \Cref{thm:generalLDP}. First we rewrite \eqref{eq:intro_weakNLS} by using Duhamel formula:
\[ 
u(t) = e^{it\Delta} u_0 + u_{\mathrm{nl}}(t)=e^{it\Delta} u_0+ \varepsilon^{\alpha}\, \int_0^t e^{i(t-t')\Delta} |u|^2 u (t')\, dt'.
\]
At a certain fixed time $t$ (independent of $\varepsilon$) we impose the condition of seeing a rogue wave of size $\varepsilon^{-1/2}$, i.e. $\norm{u(t)}_{L^{\infty}(\T)}>\varepsilon^{-1/2}$. Next we estimate the nonlinear term $u_{\mathrm{nl}}(t)$ in some space $X$ which embeds continuously into $L^{\infty}(\T)$. In this paper, we use various Fourier-Lebesgue spaces such as $X=\Fc L^{0,1}$, but other choices such as $X^{s,b}$ spaces introduced by Bourgain in \cite{Bourgain-lwp} are possible. Then we have that 
\[ \varepsilon^{-1/2} < \norm{u(t)}_{L^{\infty}_x} \leq \norm{u(t)}_{X},\]
and
\[  \norm{u_{\mathrm{nl}}(t)}_{X}= \varepsilon^{\alpha}\,\norm{\int_0^t e^{i(t-t')\Delta} |u|^2 u (t')\, dt'}_{X} \lesssim  \varepsilon^{\alpha}\,\norm{u}_{X}^3.\]
Note that the right-hand side is at least as large as $\varepsilon^{\alpha-3/2}$ (but it might be much larger!). Assuming that this trilinear estimate is sharp, we say that the problem is subcritical if $\alpha -3/2 < -1/2$, critical if $\alpha -3/2 = -1/2$ and supercritical otherwise. This explains the range of parameters in \Cref{thm:generalLDP} when $\beta=0$. The notion of criticality may be extended to times $t$ which depend on $\varepsilon$ by rescaling the equation.
\end{rk}
\begin{rk} 
Note that \Cref{thm:mainLDP} corresponds to the choice $\alpha=2$. An interesting open question would be to understand the critical case $(\alpha,\beta)=(1,0)$, as well as any supercritical cases such as $\alpha=0$ (fully non-linear regime). In fact, in the case $\alpha = 0$, the numerical experiments in \cite{OnoratoVE,VandenEijnden,VandenEijnden2} suggest that the probability of seeing a rogue wave decays at a slower speed. This would correspond to a non-Gaussian scaling in \eqref{eq:mainLDP} with a sub-quadratic rate function $I(\cdot)$ in \eqref{eq:intro_ourLDP}.

Our work constitutes a first step towards solving this more challenging problem, and proposes a framework to tackle the non-linear problem via approximating the solution of \eqref{eq:NLS_mainLDP} and deriving explicit probability bounds for this approximation.
\end{rk}

\Cref{thm:mainLDP} allows us to quantify the probability that we observe a rogue wave. Despite being rare, rogue waves can actually happen. Our next result investigates the following question: ``Conditioned on the fact that we observe a rogue wave of height at least $z>0$ at time $t>0$, what is the most likely initial datum that generates such phenomenon?'' To answer this question, we introduce the set 
\[
\mathcal{H} =\{(r_k,\phi_k)_{k\in\Z} \mid r_k\geq 0,\ \phi_k\in [0,2\pi)\}, 
\]
and endow it with a probability measure such that $r_k e^{i\phi_k}$ are distributed as independent complex Gaussian as in \eqref{eq:intro_data}. Then, we can parametrize the space of initial data as follows:
\[
\vartheta = (r_k,\phi_k)_{k\in\Z} \in \mathcal{H} \mapsto u_0(x\mid \vartheta )=\sum_{k\in\Z} c_k r_k e^{ikx+i\phi_k}.
\]
As we explain in \Cref{sec:MLE}, the question of finding the most likely initial data is connected to a certain minimization problem over the set
\begin{equation}\label{eq:intro_set_roguewaves}
\mathcal{D}(t,z_0\varepsilon^{-1/2}) = \left\lbrace\vartheta\in\mathcal{H}\ \Big |\ \sup_{x\in\T} |u(t,x\mid \vartheta)|> z_0\varepsilon^{-1/2}\right\rbrace.
\end{equation}
The solution to this minimization problem when $u$ is the solution of the linear Schr\"odinger equation yields a two-parameter family of initial data given by:
\begin{equation}\label{eq:intro_minimizers}
\begin{split}
r_k^* & := \frac{c_k z_0\varepsilon^{-1/2}}{\sum_{j\in\Z} c_j^2},\\
\phi^*_k & := \phi^*_0 - kx^* + k^2 t \, \left(\text{mod } 2\pi\right),
\end{split}
\end{equation}
where $x^{\ast}\in\T$ parametrizes the position of the peak of the rogue wave, and $\phi^{\ast}_0\in[0,2\pi)$ parametrizes the phase of the Fourier mode associated with $c_0$.

The main result in \Cref{sec:MLE} shows that this special family of initial data concentrates as much probability as the entire set of rogue waves \eqref{eq:intro_set_roguewaves}. In order to state this theorem, we construct a small neighborhood $\mathcal{U}(\varepsilon)$ around this family, given by the $\vartheta =(r_k,\phi_k)_{k\in\Z}$ satisfying:
\begin{equation}\label{eq:intro_nhood}
\begin{split}
r_k^{\ast} \leq r_k & \leq r_k^{\ast} + \varepsilon^{2/5} \qquad \mbox{for all}\ |k|\leq -\frac{1}{b} \log \left( \frac{2\sum_{j\in\Z} c_j^2}{z_0}\, \varepsilon^{9/10}\right),\\
 |\phi_k -\phi_k^{\ast}| & \leq \varepsilon\qquad \mbox{for all}\ |k|\leq \varepsilon^{-1/2}\ \mbox{except $k=0,1$}.
 \end{split}
\end{equation}

\begin{thm}\label{thm:MLE}
Consider the set $\mathcal{U}(\varepsilon)$ given by \eqref{eq:intro_minimizers}-\eqref{eq:intro_nhood}. Then $\mathcal{U}(\varepsilon)$ satisfies the same LDP, \eqref{eq:mainLDP}, as the set of rogue waves \eqref{eq:intro_set_roguewaves}. Moreover, $\mathcal{U}(\varepsilon)$ is almost entirely contained in the set  $\mathcal{D} (t,z_0\varepsilon^{-1/2}-\varepsilon)$. More precisely,
\begin{equation}
\log \P \left( \mathcal{U}(\varepsilon) -\mathcal{D} (t,z_0\varepsilon^{-1/2} - \varepsilon)\right) \lesssim - \exp (c \varepsilon^{-1/2})) \qquad \mbox{as}\ \varepsilon\rightarrow 0^{+}.
\end{equation}
\end{thm}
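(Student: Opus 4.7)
The plan splits neatly into the two assertions. First, the LDP for $\mathcal{U}(\varepsilon)$ is obtained by a direct computation of $\P(\mathcal{U}(\varepsilon))$, exploiting the product structure of the underlying Gaussian measure. Second, the containment in $\mathcal{D}(t,z_0\varepsilon^{-1/2}-\varepsilon)$ is proved by evaluating the linear (or rather resonant) approximation to $u$ at the designated peak location $x^*$ and verifying that the remainder is negligible, deterministically for the low frequencies constrained by $\mathcal{U}(\varepsilon)$ and probabilistically for the high frequencies.

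\textbf{Step 1: LDP for $\mathcal{U}(\varepsilon)$.} Writing $\eta_k=r_ke^{i\phi_k}$, the variables $(r_k,\phi_k)_{k\in\Z}$ are independent, with $r_k$ of density $2re^{-r^2}$ on $[0,\infty)$ and $\phi_k$ uniform on $[0,2\pi)$, so
\[
\P(\mathcal{U}(\varepsilon))=\prod_{|k|\leq K_\varepsilon}\bigl(e^{-(r_k^*)^2}-e^{-(r_k^*+\varepsilon^{2/5})^2}\bigr)\prod_{\substack{|k|\leq\varepsilon^{-1/2}\\ k\neq 0,1}}\frac{\varepsilon}{\pi},
\]
where $K_\varepsilon:=-\frac{1}{b}\log\!\bigl(\frac{2\sum c_j^2}{z_0}\varepsilon^{9/10}\bigr)$. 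The choice of $K_\varepsilon$ yields $r_{K_\varepsilon}^*=2\varepsilon^{2/5}$, so by the exponential decay of $c_k$ the truncation tail $\sum_{|k|>K_\varepsilon}(r_k^*)^2$ is $O(\varepsilon^{4/5})$, and consequently $\sum_{|k|\leq K_\varepsilon}(r_k^*)^2=\frac{z_0^2}{\sum_j c_j^2}\varepsilon^{-1}+o(\varepsilon^{-1})$. The lower-order logarithmic factors coming from the window widths $1-e^{-2r_k^*\varepsilon^{2/5}-\varepsilon^{4/5}}$ and from the phase windows together contribute only $O(\varepsilon^{-1/2}|\log\varepsilon|)$ to $|\log\P(\mathcal{U}(\varepsilon))|$, which vanishes after multiplication by $\varepsilon$, delivering \eqref{eq:mainLDP} for $\mathcal{U}(\varepsilon)$.

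\textbf{Step 2: containment.} Fix $\vartheta\in\mathcal{U}(\varepsilon)$. Using $\phi_k^*=\phi_0^*-kx^*+k^2t$, the linear flow yields $u_{\mathrm{lin}}(t,x^*\mid\vartheta)=e^{i\phi_0^*}\sum_k c_k r_k e^{i(\phi_k-\phi_k^*)}$. Split the sum into the low regime $|k|\leq K_\varepsilon$, the mid regime $K_\varepsilon<|k|\leq\varepsilon^{-1/2}$, and the high regime $|k|>\varepsilon^{-1/2}$, and bound by the real part. Because $r_k\geq r_k^*$ and $|\phi_k-\phi_k^*|\leq\varepsilon$, the low regime contributes at least $(1-\varepsilon^2/2)\sum_{|k|\leq K_\varepsilon}c_kr_k^*\geq z_0\varepsilon^{-1/2}-O(\varepsilon^{13/10})$; the mid regime contributes non-negatively since $\cos(\phi_k-\phi_k^*)\geq 0$; and the high regime forms a complex Gaussian sum with variance $\sum_{|k|>\varepsilon^{-1/2}}c_k^2\lesssim e^{-2b\varepsilon^{-1/2}}$, so Gaussian concentration gives $\P(|\cdot|>\varepsilon/3)\lesssim\exp(-c\varepsilon^2 e^{2b\varepsilon^{-1/2}})\lesssim\exp(-e^{c'\varepsilon^{-1/2}})$. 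Hence, outside a doubly-exponentially rare subset of $\mathcal{U}(\varepsilon)$, one has $|u_{\mathrm{lin}}(t,x^*\mid\vartheta)|\geq z_0\varepsilon^{-1/2}-2\varepsilon/3$. Invoking the resonant approximation developed in the proof of \Cref{thm:mainLDP}, the gauged remainder $|u(t,x^*)-u_{\mathrm{lin}}(t,x^*)|$ is $o(\varepsilon)$ with probability $1-\exp(-e^{c\varepsilon^{-1/2}})$, so $|u(t,x^*\mid\vartheta)|>z_0\varepsilon^{-1/2}-\varepsilon$ off a bad set of probability $\lesssim\exp(-e^{c\varepsilon^{-1/2}})$, which is the content of the second assertion.

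\textbf{Main obstacle.} The delicate step is the uniform-in-$\vartheta$ control of the nonlinear (resonant) remainder on the conditional event $\{\vartheta\in\mathcal{U}(\varepsilon)\}$, since the Gaussian measure is pinned on a growing set of low-frequency modes. One must verify that the probabilistic trilinear bounds underlying \Cref{thm:mainLDP} persist under this conditioning; the saving grace is that their Gaussian decay is driven by the high-frequency tails, which remain fully unconstrained inside $\mathcal{U}(\varepsilon)$ and so retain the concentration needed to produce the doubly-exponential estimate.
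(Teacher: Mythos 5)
Your Step 1 matches the paper's Proposition~\ref{thm:LDP_U} in all essentials: direct computation of $\P(\mathcal{U}(\varepsilon))$ as a product, extraction of the dominant $\sum_{|k|\le K_\varepsilon}(r_k^*)^2$ term, and the observation that both the phase-window factor $\sim m_2\log\varepsilon$ and the window-width correction vanish after the rescaling by $\varepsilon$.

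Your Step 2, restricted to its linear content, is correct and takes a genuinely simpler route than the paper. The paper works with the quadratic quantity $\sup_{x}|u(t,x)|^2$, expands it as a double sum over frequency pairs, isolates a cross-term error $E^-=-\sum_{(j,k)\in\mathcal{R}}c_jc_kr_jr_k$ supported on pairs with at least one index beyond $m_2$, and controls $\P(E<-c\varepsilon)$ by an exponential-moment bound at the scale $\lambda\sim e^{bm_2}$ (Lemma~\ref{thm:error_dexp}, with AM--GM and an MGF computation for Rayleigh$^2$ variables). You instead lower-bound the linear quantity $\Re[e^{-i\phi_0^*}u(t,x^*)]$: the low regime is deterministically $\ge z_0\varepsilon^{-1/2}-O(\varepsilon^{13/10})$, the mid regime is deterministically non-negative because $\cos(\phi_k-\phi_k^*)\ge 0$ and $r_k\ge 0$, and the high regime is an unconstrained centered complex Gaussian of variance $\sum_{|k|>\varepsilon^{-1/2}}c_k^2\lesssim e^{-2b\varepsilon^{-1/2}}$, whose tail gives the doubly exponential bound directly. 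By bounding a linear-in-$\vartheta$ quantity you avoid the cross-terms altogether, so the Chernoff machinery of Lemma~\ref{thm:error_dexp} is not needed; this is a real streamlining.

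There is, however, a genuine error in the final two sentences of Step~2 and in the ``main obstacle'' paragraph. Theorem~\ref{thm:MLE} is a statement about the \emph{linear} Schr\"odinger equation: the set $\mathcal{D}$ in \eqref{eq:intro_set_roguewaves} is built from the solution of \eqref{eq:LSn}, as the sentence preceding that display, all of Section~\ref{sec:MLE}, and the remark immediately after the theorem make explicit. Your invocation of ``the resonant approximation developed in the proof of Theorem~\ref{thm:mainLDP}'' together with the claim that $|u(t,x^*)-u_{\mathrm{lin}}(t,x^*)|$ is $o(\varepsilon)$ with probability $1-\exp(-e^{c\varepsilon^{-1/2}})$ is therefore superfluous, and it is also false at the stated precision: Corollary~\ref{thm:small_error} and Proposition~\ref{thm:small_error2} only give a remainder of size $\varepsilon^{-1/2+\delta}$ — vastly larger than $\varepsilon$ — and they fail on the event $\{\norm{u_0}_{\Fc L^{2,1}}\gtrsim\varepsilon^{-1/2}\}$, whose probability is $\sim e^{-c/\varepsilon}$, singly rather than doubly exponential. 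The doubly exponential rate in the theorem is specific to the linear problem; the nonlinear adaptation sketched in Remark~\ref{rk:fromLineartoNLS} must absorb a remainder of order $\varepsilon^{-1/2+\delta}$ into the threshold and accept a weaker probability estimate. Strike those last sentences and your argument proves the theorem as stated.
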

\begin{rk}
The construction of the set $\mathcal{U}(\varepsilon)$ in \Cref{thm:MLE} is based on the linear Schr\"odinger equation. However, one can adapt this construction to tackle the nonlinear equation up to times given by \Cref{thm:mainLDP}. See \Cref{sec:MLE} for more details.
\end{rk}

Finally, we list a series of interesting open questions at the end of \Cref{sec:MLE}. Furthermore, we complement this theoretical analysis with some simulations to illustrate how the family of initial data given by \eqref{eq:intro_minimizers} behaves as time passes.

\subsection{Ideas of the proof}

\subsubsection{Main ideas in \Cref{thm:mainLDP}}
At a conceptual level, the proof of \Cref{thm:generalLDP} and that of \Cref{thm:mainLDP} are similar, so let us focus on the latter. The main ideas of the proof of \Cref{thm:mainLDP} are: (a) finding a good approximation $u_{\mathrm{app}}$ to the solution to the NLS equation \eqref{eq:NLS_mainLDP}, and (b) proving a LDP akin to \eqref{eq:mainLDP} for $u_{\mathrm{app}}$. For small times $t\ll \varepsilon^{-1}$, the linear flow constitutes a good approximation, whereas larger times require the use of a \emph{resonant approximation}\footnote{Related to the remark above about the 2D problem, for  irrational 2D tori it  is exactly in the resonant part of the NLS equation that we see the decoupling into two 1D resonant sets and hence we may be able to have good approximations also in 2D using the 1D tools developed below.}.

The resonant approximation is an explicit solution to an equation similar to \eqref{eq:NLS_mainLDP}, where we replace the nonlinearity $\varepsilon^2 |u|^2 u$ by a different cubic nonlinearity that captures the main contribution of $\varepsilon^2 |u|^2 u$ to the equation over timescales $t=\O (\varepsilon^{-1})$. This approximation has the form:
\begin{equation}\label{eq:intro_resonant}
u_{\mathrm{app}} (t,x) =e^{2it \varepsilon^2 \, M}\,  \sum_{k\in\Z} c_k \eta_k \, e^{ikx +i \varepsilon^2 t\, c_k^2 |\eta_k|^2- it |k|^2}
\end{equation}
where $M=\norm{u(t)}_{L^2(\T)}^2$ is the mass, which is conserved. Further details are provided in \Cref{subsec:resonant}. Such approximations have been used by Carles, Dumas and Sparber \cite{CarlesDumasSparber} and Carles and Faou \cite{CarlesFaou} in the context of
instability of the periodic NLS equation and energy cascades, respectively.

Intuitively, the reason why $\sup_{x\in\T}|u_{\mathrm{app}}(t,x)|$ remains sub-Gaussian is that nonlinear effects are restricted to the phases of the Fourier coefficients, while the moduli are unchanged. It is unclear whether this holds in other regimes beyond those given by \Cref{thm:generalLDP}, especially so in the case of the focusing NLS equation.

Both the linear and resonant approximations satisfy an LDP of the form \eqref{eq:mainLDP}. The first ingredient in the proof of \Cref{thm:mainLDP} is a general result in Large Deviations Theory known as the G\"artner-Ellis theorem, see \Cref{sec:linearLDP} as well as \Cref{thm:resonantLDP}. This classic result in Probability Theory uses convex analysis to state that an LDP for a family of probability measures, $\left\{\mu_{\varepsilon}\right\}_{\varepsilon > 0}$, holds provided that the limit of their re-scaled cumulant-generating functions exists and enjoys some good regularity properties.

The second ingredient in the proof of \Cref{thm:mainLDP} is a bootstrapping  argument, which shows that a large difference between the actual solution to \eqref{eq:NLS_mainLDP} and our approximation must be a result of an even larger initial datum $u_0$. Such large initial data are very unlikely, which allows us to replace the actual solution $u(t,x)$ in \eqref{eq:mainLDP} by the approximation $u_{\mathrm{app}}(t,x)$ up to a negligible error. As a result one can extend the LDP from $u_{\mathrm{app}}$ to $u$.

\subsubsection{Main ideas in \Cref{thm:MLE}}

The goal of this result is to show that the most likely way in which rogue wave arise in this weakly nonlinear context is due to phase synchronization\footnote{Also known as ``constructive interference'' in the Physics literature.}. The construction of the one-sided neighborhood $\mathcal{U}(\varepsilon)$ in \eqref{eq:intro_nhood} is based on two-scales: we control the moduli of $\O (|\log \varepsilon|)$ Fourier modes, but we force $\O (\varepsilon^{-1/2})$ phases to almost synchronize at a certain point in space and time.

In \Cref{thm:LDP_U}, we exploit the fact that our construction is explicit to show that $\mathcal{U}(\varepsilon)$ satisfies the same LDP as that in \Cref{thm:mainLDP} by an elementary argument. The key result, however, is to show that the elements in $\mathcal{U}(\varepsilon)$ are almost exclusively rogue waves. To prove this we must quantify the probability that the Fourier modes that we do not control in $\mathcal{U}(\varepsilon)$ work against us and create cancellation. Loosely speaking, this is achieved by proving the estimate
\[ 
\sup_{x\in\T} |u(t,x)|^2 \geq \left(\sum_{|k|\lesssim |\log \varepsilon|} c_k |\eta_k|\right)^2 + E(\varepsilon)
\]
for some explicit error function $E(\varepsilon)$. The key argument in the proof of \Cref{thm:MLE} is a careful analysis of this error function, which shows that it is very close to zero with very high probability.

\subsection{Outline} 
In \Cref{sec:linearLDP}, we develop a LDP for the linear Schr\"odinger equation \eqref{eq:intro_weakNLS} (with $\mu=0$). In \Cref{sec:subcritical}, we prove \Cref{thm:mainLDP} for subcritical times. In \Cref{sec:critical}, we introduce the resonant approximation and prove \Cref{thm:mainLDP} for critical times. Finally, in \Cref{sec:MLE} we motivate and study the minimization problem that leads to \eqref{eq:intro_minimizers}. Moreover, we prove \Cref{thm:MLE} and conduct numerical simulations about the ``most typical'' rogue waves.

\subsection{Notation}
We write $A\lesssim B$ to indicate an estimate of the form $A\lesssim C B$ for some positive constant $C$ which may change from line to line. The inequality $A\lesssim_d B$ indicates that the implicit constant $C$ depends on $d$.

We also employ the big $\O$ notation $A=\O_d(B)$, which means that $A\lesssim B$ as $d\rightarrow 0$. For a real number $a$, the notation $a-$ means $a-\varepsilon$ for $0<\varepsilon\ll 1$ small enough. Similarly, $a+$ means $a+\varepsilon$ for $0<\varepsilon\ll 1$ small enough.

For $1\leq p\leq \infty$ we often work in the space $L^p ([a,b])$, which consists of functions $f:[a,b]\rightarrow \C$ such that 
\[
\norm{f}_{L^p} = \left( \int_{a}^{b} |f(x)|^p\, dx \right)^{1/p} <\infty ,
\]
with the usual modifications when $p=\infty$. Similarly, $\ell^p$ consists of sequences $f=(f_k)_{k\in\Z}$, $f_k\in\C$, such that
\[
\norm{f}_{\ell^p} = \left( \sum_{k\in\Z} |f_k|^p \right)^{1/p} <\infty .
\]

In \Cref{sec:subcritical} and \Cref{sec:critical} we work in Fourier-Lebesgue spaces $\Fc L^{s,p}=\Fc L^{s,p} (\T)$ with $s\in\R$ and $1\leq p\leq \infty$. These are spaces of functions $f:\T \rightarrow\C$ whose Fourier coefficients $(f_k)_{k\in\Z}$ have the following finite norm:
\[
\norm{f}_{\Fc L^{s,p}} = \norm{ \langle k\rangle^{s} f_k}_{\ell_k^p}= \left(\sum_{k\in\Z} \langle k\rangle^{ps}\, |f_k|^p \right)^{1/p} = \left(\sum_{k\in\Z} (1+|k|)^{ps}\, |f_k|^p \right)^{1/p}.
\]
Finally, we denote $H^s(\T)=\mathcal{F}L^{s,2}(\T)$ as usual.

\subsection{Acknowledgements}
We thank Eric Vanden-Eijnden and Miguel Onorato for their suggestions and helpful discussions about their work. The first author also thanks Promit Ghosal for some rich conversations about Large Deviations Theory.

%The second and fourth authors are supported by the Simons Collaboration Grant on Wave Turbulence. \red{Other sources of funding?}

\section{Large Deviations Principle for the linear Schr\"odinger equation}\label{sec:linearLDP}

\noindent Consider the linear Schr\"odinger equation on the torus $\T=[0,2\pi]$:
\begin{equation}\label{eq:LSn}
\left\lbrace\begin{array}{l}
i\, \pa_t u  + \Delta u  = 0, \\
u(t,x)|_{t=0}=u_0,\end{array}\right.
\end{equation}
where the random initial data, $u_0$, is given as in \eqref{eq:intro_data}. Our goal is to estimate
\[
\pp{\sup_{x \in \T} |u(t,x)| \geq z(\varepsilon)}
\] 
sharply, for some fixed $t \in \R$ and $z(\varepsilon)$ large. More concretely, we want to obtain a \textbf{Large Deviations Principle (LDP)} for $\sup_{x \in \T} |u(t,x)|$. This requires that we find a lower semi-continuous function, $I: [0, +\infty) \mapsto [0, +\infty]$, and a real number $\alpha >0$, such that, for any $z_0 > 0$ fixed, we have
\begin{equation}\label{eq:LDP_lower}
- \inf_{z \in (z_0, +\infty)} I(z) \le \liminf_{\varepsilon \rightarrow 0^+} \varepsilon^{\alpha} \log\left(\pp{\sup_{x \in \T} |u(t,x)| \geq z_0\varepsilon^{-1/2}}\right),
\end{equation}
and
\begin{equation}\label{eq:LDP_upper}
\limsup_{\varepsilon \rightarrow 0^+} \varepsilon^{\alpha}\log\left(\pp{\sup_{x \in \T} |u(t,x)| \geq z_0\varepsilon^{-1/2}}\right) \le - \inf_{z \in [z_0, +\infty)} I(z).
\end{equation}

\noindent We gather these results in the following proposition.

\begin{prop}\label{prop:linearLDP} Consider the linear Schr\"odinger equation on the torus $\T=[0,2\pi]$ as in \eqref{eq:LSn}, with random initial data given by $u_0=\sum_{k\in\Z} c_k \eta_k e^{ik x}$, presented in \eqref{eq:intro_data}. Then,
\begin{equation}\label{eq:linearLDP}
\lim_{\varepsilon\rightarrow 0^{+}} \varepsilon\, \log \P \left( \norm{e^{-it\Delta} u_0}_{L^{\infty}_x} > z_0 \, \varepsilon^{-1/2} \right) =- \frac{z_0^2}{\sum_{k\in\Z} c_k^2},
\end{equation}
for any $t \in \R$ and $z_0 >0$. In particular, \eqref{eq:LDP_lower} and \eqref{eq:LDP_upper} are satisfied with rate function $I(z) = \frac{z^2}{\sum_{k\in\Z} c_k^2}$ and $\alpha = 1$.
\end{prop}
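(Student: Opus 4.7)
The plan is to reduce the statement to a tail estimate for the supremum of a centered complex Gaussian random field on $\T$ (indexed only by $x$, since the left-hand side in \eqref{eq:linearLDP} carries no $\varepsilon$), and then prove matching upper and lower Gaussian tail bounds. A first observation simplifies things: writing $e^{it\Delta}u_0 = \sum_{k\in\Z} c_k \eta_k e^{i(kx - t|k|^2)}$ and setting $\tilde\eta_k := e^{-it|k|^2}\eta_k$, the family $(\tilde\eta_k)_{k\in\Z}$ is again i.i.d.\ standard complex Gaussian by rotational invariance. Hence $x\mapsto (e^{-it\Delta}u_0)(x)$ has the same law as $x\mapsto u_0(x)$, and for every fixed $x_0\in\T$ the scalar random variable $(e^{-it\Delta}u_0)(x_0)$ is in distribution $\sum_k c_k\eta_k \sim \mathcal{CN}(0,\sum_{k\in\Z}c_k^2)$. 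This yields the exact one-point tail
\[
\P\!\left(|(e^{-it\Delta}u_0)(x_0)| > r\right) = \exp\!\left(-\frac{r^2}{\sum_{k\in\Z}c_k^2}\right).
\]

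For the lower bound \eqref{eq:LDP_lower}, I simply use $\sup_{x\in\T}|u(t,x)| \ge |u(t,0)|$ and the one-point identity above with $r=z_0\varepsilon^{-1/2}$; multiplying by $\varepsilon$ and taking $\liminf$ gives exactly $-z_0^2/\sum_k c_k^2$, so the rate function is $I(z)=z^2/\sum_k c_k^2$ with $\alpha=1$.

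For the upper bound \eqref{eq:LDP_upper}, the key tool will be a Gaussian concentration inequality (Borell--Sudakov--Tsirelson). I would rewrite
\[
\sup_{x\in\T}|u(t,x)| \;=\; \sup_{(x,\theta)\in\T\times[0,2\pi)} \mathrm{Re}\!\left(e^{-i\theta} u(t,x)\right),
\]
which displays the supremum as that of a centered real-valued Gaussian process on a compact index set, with constant pointwise variance $\frac{1}{2}\sum_k c_k^2$. Since the weights $c_k$ decay (super)exponentially, the process is $C^\infty$ almost surely, so the constant $m_0 := \mathbb{E}\bigl[\sup_{x\in\T}|u(t,x)|\bigr]$ is finite and does not depend on $\varepsilon$. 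Borell--TIS then gives, for all $r>0$,
\[
\P\!\left(\sup_{x\in\T}|u(t,x)| > m_0 + r\right) \;\le\; \exp\!\left(-\frac{r^2}{\sum_{k\in\Z}c_k^2}\right).
\]
Applying this with $r = z_0\varepsilon^{-1/2} - m_0$ (positive for $\varepsilon$ small), taking $\varepsilon\log$, and sending $\varepsilon\to 0^+$, the correction $m_0\varepsilon^{1/2}$ vanishes and one obtains $\limsup \le -z_0^2/\sum_k c_k^2$, which together with the lower bound gives equality in \eqref{eq:linearLDP}.

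Alternatively, and more in line with the framework the paper signals in the introduction, the same conclusion follows from the Gärtner--Ellis theorem applied to $M_\varepsilon := \varepsilon^{1/2}\sup_{x}|u(t,x)|$ at speed $\varepsilon^{-1}$: the pointwise lower bound and the Borell--TIS upper bound pinch the scaled cumulant generating function to $\Lambda(\lambda)=\lambda^2(\sum_k c_k^2)/4$ for $\lambda>0$, whose Legendre dual is $I(z)=z^2/\sum_k c_k^2$ for $z\ge 0$. The only nontrivial step is the control of $m_0$ and the measurability/continuity needed for Borell--TIS on a continuous index set; both are routine here because the random series converges in $H^s$ for every $s$ almost surely, giving $u(t,\cdot)\in L^\infty$ by Sobolev embedding. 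Thus the main obstacle is not conceptual but bookkeeping: carefully matching constants (the factor $\tfrac{1}{2}$ in the weak variance versus the factor $1$ in the one-point tail) so that the upper and lower bounds agree.
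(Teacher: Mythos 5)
Your lower bound coincides with the paper's: both use $\sup_x|u(t,x)|\ge|u(t,0)|$ and the exact Rayleigh/exponential tail of a single Gaussian coordinate. Your upper bound, however, is a genuinely different route. The paper first upgrades to the almost-sure envelope $\sup_x|u(t,x)|\le\sum_k c_k R_k$, where $R_k=|\eta_k|$, and then runs G\"artner--Ellis on the scalar variable $\varepsilon^{1/2}\sum_k c_k R_k$. Computing the limiting scaled cumulant generating function $\Lambda(\lambda)=\tfrac{\lambda^2}{4}\sum c_k^2$ there is the technical heart of the section and requires a rather delicate combinatorial argument (a sum over subsets $P$ of integer intervals, estimates $G(P)\lesssim -\|P\|$, and control of level-set cardinalities via the partition function $p(m)$). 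You sidestep all of that by realizing $\sup_x|u|=\sup_{(x,\theta)}\Re\bigl(e^{-i\theta}u(t,x)\bigr)$ as the supremum of a real centered Gaussian process with constant pointwise variance $\tfrac12\sum c_k^2$ and invoking Borell--TIS, which yields exactly $\P(\sup|u|>m_0+r)\le e^{-r^2/\sum c_k^2}$; since $m_0=\ee{\sup_x|u|}$ is finite (indeed $m_0\le\tfrac{\sqrt\pi}{2}\sum c_k$ using the same envelope the paper does) and is $\varepsilon$-independent, the correction washes out in the $\varepsilon\log$ limit. Your constant-matching is correct: $2\sigma^2=\sum c_k^2$ reproduces the one-point tail rate. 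What the paper's heavier machinery buys is that it is entirely elementary (no Gaussian isoperimetry needed), and, more importantly, the G\"artner--Ellis control of $\sum_k c_k R_k$ is reused verbatim in Sections 3 and 4 to estimate $\P(\|u_0\|_{\Fc L^{0,1}}\ge \varepsilon^{-1/2-\delta})$ and $\P(\|u_0\|_{\Fc L^{2,1}}\ge d_2\varepsilon^{-1/2})$ when bounding the error events $\mathcal{B}_\varepsilon^c$. Your argument is cleaner for Proposition 2.1 in isolation, but if one adopts it globally one would still need a separate LDP for the Fourier--Lebesgue norms of $u_0$, which is precisely what the paper's approach supplies as a by-product.
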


\begin{rk}\label{rk:chainEqualities}
	A posteriori we see that our choice for $I$ is actually continuous, and hence 
	\[
	\inf_{z \in [z_0, +\infty)} I(z) = \inf_{z \in (z_0, +\infty)} I(z).
	\]
	This implies that \eqref{eq:LDP_lower} and \eqref{eq:LDP_upper} is actually a chain of equalities, and we can prove the stronger result presented in \eqref{eq:linearLDP}.
\end{rk}

\begin{rk}\label{rk:nonGaussian}
It is worth noticing that $I$ is a quadratic function. This corresponds to a sub-Gaussian behavior for the tails of $\sup_{x\in\T} |u(t,x)|$. Nevertheless, it is easy to see that $\sup_{x\in\T} |u(t,x)|$ itself is not Gaussian.
\end{rk}

\begin{rk}
 In the Probability literature, Lindgren studied various path properties of Gaussian fields around its local maxima in a series of papers \cite{Lindgren1, Lindgren2,Lindgren4, Lindgren3}. However, these results do not apply directly to our work since we are interested in establishing an LDP for the global maxima on the space variable.
 \end{rk}

We divide the proof of \Cref{prop:linearLDP} into two parts: first, we establish the lower bound, which follows from the fact that we can explicitly derive the distribution of $u(t,x)$, for a fixed pair $(t,x) \in \R \times \T$; and then, we prove the upper bound estimating from above $\sup_{x\in\T} |u(t,x)|$, and applying the G\"artner-Ellis Theorem.

\subsection{Lower Bound}\label{sec:linearLowerBound} The solution $u$ for the linear flow in \eqref{eq:LSn} can be written as 
\begin{equation}\label{eq:solution}
u(t,x)= e^{-it\Delta} u_0(x) = \sum_{k \in \Z} c_k\eta_k\, e^{i\, k x- i\, k^2 t}, 
\end{equation}
for $(t,x) \in \R \times \T$. Since all the terms in the sum are independent complex normal random variables, so is $u(t,x)$ as long as $\sum_{k \in \Z} \ee{u^k(t,x)}$, $\sum_{k \in \Z} \ee{u^k(t,x)u^k(t,x)}$, and $\sum_{k \in \Z}\ee{u^k(t,x)\overline{u^k(t,x)}}$ converge, where $u^k(t,x):=c_k\eta_k\, e^{i \, k x- i\, k^2 t}$. It is easy to see that
\[
\sum_{k \in \Z} \ee{u^k(t,x)} = \sum_{k \in \Z} c_k\ee{\eta_k}e^{i \, k x- i\, k^2 t} = 0, 
\]
\[
\sum_{k \in \Z} \ee{u^k(t,x)u^k(t,x)} = \sum_{k \in \Z} c_k^2\ee{\eta_k\eta_k}e^{i \, 2k x- i\, 2k^2 t} = 0, 
\]
and
\[
\sum_{k \in \Z}\ee{u^k(t,x)\overline{u^k(t,x)}} = \sum_{k \in \Z} c_k^2\ee{\eta_k \bar{\eta_k}} =  \sum_{k \in \Z} c_k^2 < \infty.
\]

Therefore, for $(t,x) \in \R \times \T$ fixed, $u(t,x)$ is a complex Gaussian distribution with mean $0$ and variance $\sum_{k \in \Z} c_k^2$. It is worth noting that this distribution depends neither on $t$ nor on $x$. That is, as a random field, $(t,x) \mapsto u(t,x)$ is at stationarity.

It is a well-known result in Probability \cite{siddiqui1962some} that the modulus of a complex Gaussian random variable with mean $0$ and variance $2\sigma^2$ follows a \textbf{Rayleigh distribution} with parameter $\sigma > 0$ and probability density function
\begin{equation}\label{eq:densityRayleigh}
f(x \, | \, \sigma) = \frac{x}{\sigma^2}e^{-\frac{x^2}{2\sigma^2}}, \qquad x \ge 0. 
\end{equation}

\noindent This fact allows us to compute the lower bound as follows:
\begin{align*}
& \pp{\sup_{x \in \T} |u(t,x)| \geq z_0 \varepsilon^{-1/2}} \ge \pp{|u(t,0)| \geq z_0 \varepsilon^{-1/2}} \\
& = \int_{z_0 \varepsilon^{-1/2}}^{+\infty}\frac{2x}{\sum_{k \in \Z} c_k^2}\exp\left(-\frac{x^2}{\sum_{k \in \Z} c_k^2}\right)dx = \exp\left(-\frac{z_0^2 \varepsilon^{-1}}{\sum_{k \in \Z} c_k^2}\right).
\end{align*}

\noindent Hence, if we take $\alpha = 1$ and define $I(z):= \frac{z^2}{\sum_{k \in \Z} c_k^2}$, we can establish \eqref{eq:LDP_lower}, as we wanted.

\subsection{Upper bound}\label{sec:linearUpperBound}

Since $\eta_k$ is a standard complex Gaussian, we can write $\eta_k$ as $R_k e^{i \varphi_k}$, where $R_k \stackrel{i.i.d.}{\sim} Rayleigh(1/\sqrt{2})$ and $\varphi_k \stackrel{i.i.d.}{\sim} U[0,2\pi]$, with $R_k$ and $\varphi_k$ independent of each other (see \cite{anderson1958introduction}, Section 2.7).
Then, from \eqref{eq:solution} we have:
\begin{equation}\label{eq:overest}
\begin{aligned}
 \sup_{x \in \T} |u(t,x)|^2 = \sum_{k \in \Z} (c_k R_k)^2 + \sup_{x \in \T} \sum_{j\neq k}^{\ast} c_j c_k R_j R_k \cos(\psi_j -\psi_k) \\
 \leq  \sum_{k \in \Z} (c_k R_k)^2 + \sum_{j\neq k}^{\ast} c_j c_k R_j R_k = \left(\sum_{k \in \Z} c_k R_k\right)^2,
 \end{aligned}
\end{equation}
where $\sum^{\ast}$ indicates a sum in all $j, k \in \Z$, and $\psi_k (t,x)= \varphi_k + k x - k^2 t$. Using \eqref{eq:overest}, we can estimate from above the probability in \eqref{eq:linearLDP} to get
\begin{equation}\label{eq:probUpperBound}
 \P \left( \sup_{x \in \T} |u(t,x)|\geq z_0\varepsilon^{-1/2}\right) \leq \P \left( \sum_{k \in \Z} c_k R_k \geq z_0\varepsilon^{-1/2}\right).
\end{equation}

In order to estimate the right-hand side of \eqref{eq:probUpperBound}, we will derive an LDP for the continuous parameter family $\mu_{\varepsilon}$, defined as the probability measures of 
\begin{equation}\label{eq:probMeasuresGE}
Z_{\varepsilon}:=\varepsilon^{1/2}\sum_{k \in \Z} c_k R_k,
\end{equation}
for $\varepsilon >0$. The advantage of \eqref{eq:probMeasuresGE} is that we have gotten rid of the supremum on $\T$, and recovered the classic formulation for an LDP, where the G\"artner-Ellis Theorem may apply. We present below the version of this result that we will use in our work. It corresponds to a modification of Theorem 2.3.6 in \cite{dembo2011large}.
\begin{thm}[G\"artner-Ellis Theorem]\label{thm:GE}
For $\lambda \in \R$ and $\varepsilon >0$, let us define
\begin{equation}\label{eq:GE-MGF-muepsilon}
    \Lambda_{\varepsilon}(\lambda):= \log \ee{e^{\lambda Z_{\varepsilon}}},
\end{equation}
where $Z_{\varepsilon}$ has distribution $\mu_{\varepsilon}$, as in \eqref{eq:probMeasuresGE}. If the function $\Lambda(\cdot)$, defined as the limit
\begin{equation}\label{eq:GE-MGF-limit}
    \Lambda(\lambda):= \lim_{\varepsilon \rightarrow 0^+}\varepsilon \Lambda_{\varepsilon}(\varepsilon^{-1}\lambda),
\end{equation}
exists for each $\lambda \in \R$, takes values in $\R$, and is differentiable, then we have that
\begin{equation}\label{eq:GE_LDP_lower}
- \inf_{z \in (z_0, +\infty)} \Lambda^*(z) \le \liminf_{\varepsilon \rightarrow 0^+} \varepsilon \log \mu_{\varepsilon}((z_0, +\infty)),
\end{equation}
and
\begin{equation}\label{eq:GE_LDP_upper}
\limsup_{\varepsilon \rightarrow 0^+} \varepsilon \log \mu_{\varepsilon}([z_0, +\infty)), \le - \inf_{z \in [z_0, +\infty)} \Lambda^*(z),
\end{equation}
where $\Lambda^*(\cdot)$ is the Fenchel-Legendre transform of $\Lambda(\cdot)$.
\end{thm}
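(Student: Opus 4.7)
The plan is to establish \eqref{eq:GE_LDP_upper} and \eqref{eq:GE_LDP_lower} by the classical convex-duality route, splitting the proof into a straightforward Chernoff upper bound and a more delicate exponential-tilting lower bound. First, for the upper bound, I would apply Markov's inequality to $e^{\lambda Z_\varepsilon/\varepsilon}$: for any $\lambda \geq 0$,
\[
\mu_\varepsilon([z_0,+\infty)) \leq e^{-\lambda z_0/\varepsilon}\,\ee{e^{\lambda Z_\varepsilon/\varepsilon}} = \exp\!\bigl(-\varepsilon^{-1}[\lambda z_0 - \varepsilon\Lambda_\varepsilon(\lambda/\varepsilon)]\bigr).
\]
Taking $\varepsilon\log$, passing to the $\limsup$ as $\varepsilon\to 0^{+}$, and invoking hypothesis \eqref{eq:GE-MGF-limit} yields $\limsup \varepsilon\log \mu_\varepsilon([z_0,+\infty)) \leq -\lambda z_0 + \Lambda(\lambda)$. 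Optimizing over $\lambda \geq 0$ and recognizing the Fenchel-Legendre transform produces $-\Lambda^{\ast}(z_0)$, and the infimum formulation in \eqref{eq:GE_LDP_upper} follows from convexity of $\Lambda^{\ast}$ together with $\Lambda^{\ast}\geq 0$ (which comes from $\Lambda(0)=0$), since $z\mapsto \Lambda^{\ast}(z)$ is then monotone on the relevant tail.

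For the lower bound, I would use an exponential tilt. Fix $z > z_0$; by the differentiability of the convex function $\Lambda$, pick $\lambda^{\ast} = \lambda^{\ast}(z)$ solving $\Lambda'(\lambda^{\ast}) = z$ on the range of $\Lambda'$. Define the tilted probability measures $\tilde\mu_\varepsilon$ by
\[
\frac{d\tilde\mu_\varepsilon}{d\mu_\varepsilon}(y) = \exp\!\bigl(\lambda^{\ast} y/\varepsilon - \Lambda_\varepsilon(\lambda^{\ast}/\varepsilon)\bigr),
\]
under which $Z_\varepsilon$ has rescaled CGF $\Lambda(\cdot + \lambda^{\ast}) - \Lambda(\lambda^{\ast})$, with derivative at $0$ equal to $z$. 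A standard second-moment or Chernoff argument applied in the tilted world then shows that $Z_\varepsilon$ concentrates around $z$ under $\tilde\mu_\varepsilon$. Inverting the tilt and localizing to $(z-\delta, z+\delta)$, for $\lambda^{\ast}>0$ one has
\[
\mu_\varepsilon((z-\delta, z+\delta)) \geq e^{-\lambda^{\ast}(z+\delta)/\varepsilon + \Lambda_\varepsilon(\lambda^{\ast}/\varepsilon)}\,\tilde\mu_\varepsilon((z-\delta, z+\delta)).
\]
Taking $\varepsilon\log$, sending $\varepsilon\to 0$ and then $\delta\to 0$, gives $\liminf \varepsilon\log \mu_\varepsilon((z_0,+\infty)) \geq -\Lambda^{\ast}(z)$, and infimizing over $z>z_0$ establishes \eqref{eq:GE_LDP_lower}.

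The main obstacle is squarely in the lower bound. One must (i) verify rigorously that the tilted family concentrates at $z = \Lambda'(\lambda^{\ast})$, and (ii) handle the boundary values of $z$ where a solution $\lambda^{\ast}(z)$ of $\Lambda'(\lambda^{\ast})=z$ may fail to exist. In the general Gärtner-Ellis framework, obstacle (ii) forces the technical notion of \emph{exposed points} of $\Lambda^{\ast}$ and careful topological bookkeeping; the simplified hypothesis here that $\Lambda$ is finite and differentiable on all of $\R$ ensures the subdifferential of $\Lambda^{\ast}$ is single-valued on the relevant range, sidestepping the exposed-point discussion entirely. Obstacle (i) is resolved by re-running the Chernoff argument of the first paragraph inside the tilted measure, which is legitimate since the tilted rescaled CGF is a pure shift of $\Lambda$, and so hypothesis \eqref{eq:GE-MGF-limit} carries over automatically. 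These two observations together reduce the lower bound to a bookkeeping exercise matching the upper bound in symmetry, completing the proof.
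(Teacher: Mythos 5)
The paper does not supply a proof of this statement: immediately after the theorem, the authors remark that it ``corresponds to a modification of Theorem~2.3.6 in \cite{dembo2011large}'' (Dembo--Zeitouni) and treat it as a citation. Your sketch is a correct rendition of the standard textbook proof of G\"artner--Ellis, i.e.\ exactly the argument in that reference: a Chernoff (Markov) bound for the closed upper tail, and an exponential change of measure for the open lower bound. You also correctly identify the key simplification: with $\Lambda$ finite and differentiable on all of $\R$ there is no boundary of $\operatorname{dom}\Lambda$ at which steepness could fail, so $\Lambda$ is essentially smooth, the range of $\Lambda'$ exhausts the interior of $\{\Lambda^*<\infty\}$, and the ``exposed point'' bookkeeping of the general theorem collapses. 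Two small points one would have to tidy in a full write-up: (i) in the upper bound, $\sup_{\lambda\geq 0}[\lambda z_0 - \Lambda(\lambda)]$ equals $\Lambda^*(z_0)$ only when $z_0\geq\Lambda'(0)$, and equals $0$ otherwise; in either case it coincides with $\inf_{z\geq z_0}\Lambda^*(z)$, which is precisely the convexity/monotonicity argument you gesture at; (ii) you only invert the tilt for $\lambda^*(z)>0$, which suffices when $z_0\geq\Lambda'(0)$ (and that is the only regime needed in this paper), but the symmetric inequality with $e^{-\lambda^*(z-\delta)/\varepsilon}$ in place of $e^{-\lambda^*(z+\delta)/\varepsilon}$ handles $\lambda^*\leq 0$. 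Neither of these is a genuine gap.
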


\begin{rk}
In the Probability literature, the function $\Lambda_{\varepsilon}(\lambda)$ is known as the \textbf{cumulant-generating function} of $Z_{\varepsilon}$.
\end{rk}

\begin{rk}
For the purpose of this section, i.e., to derive an upper bound for the right-hand side of \eqref{eq:probUpperBound}, we will only need \eqref{eq:GE_LDP_upper}. However, we present in \Cref{thm:GE} both the upper and lower bounds, since we will use the latter in \Cref{sec:MLE}.
\end{rk}

Next, let us compute the limit in \eqref{eq:GE-MGF-limit}, and check that the function $\Lambda(\cdot)$ satisfies the assumptions required to apply \Cref{thm:GE}. By direct computation using the density function for $R_k$ in \eqref{eq:densityRayleigh} and the Monotone Convergence Theorem, we obtain
\begin{equation}\label{eq:mainsum_upperb}
\Lambda_{\varepsilon} (\varepsilon^{-1} \, \lambda) = \lim_{n \rightarrow \infty} \sum_{|k| \le n} \log \left( 1+ \sqrt{\pi}\varepsilon^{-1/2}\,\lambda\,c_k\, \exp\left(\frac{\varepsilon^{-1} \, \lambda^2 \, c_k^2}{4}\right) \, \P (X_k\geq 0)\right),
\end{equation}
where $X_k$ are independent normal random variables with mean $\frac{\varepsilon^{-1/2}\, \lambda\,  c_k}{2}$ and variance $c_k^2/2$. Before taking the limit as $\varepsilon \rightarrow 0^+$, we need to study the series in \eqref{eq:mainsum_upperb} and prove that it actually converges.

Fix $\lambda \in \R$ and $\varepsilon >0$, and denote $\varepsilon^{-1/2}\,\lambda$ by $\lambda_{\varepsilon}$. Since all the terms are positive, we can bound $\P (X_k\geq 0)$ by $1$, and define the functions
\begin{equation}\label{eq:def_Fn}
F_{n}(\lambda_{\varepsilon})=\prod_{|k|\leq n} \left( 1+ \sqrt{\pi}\, \lambda_{\varepsilon}\, c_k \,\exp\left(\frac{\lambda_{\varepsilon}^2 \, c_k^2}{4}\right)\right),
\end{equation}
for any $n \in \N$. It is clear that $\Lambda_{\varepsilon} (\varepsilon^{-1} \, \lambda) \le \lim_{n \rightarrow \infty} \log F_{n}(\lambda_{\varepsilon})$. Next, we can rewrite the product in \eqref{eq:def_Fn} as a sum over all subsets $P$ of $\mathcal{P}_n:=[-n,n]\cap \Z$, namely
\[
F_{n}(\lambda_{\varepsilon})= \sum_{P\subset \mathcal{P}_n} e^{V(P, \, \lambda_{\varepsilon})},
\]
where
\begin{equation}\label{eq:V_Pn}
V(P, \, \lambda_{\varepsilon})= \frac{1}{2}\, |P|\, \log( \pi) + |P| \, \log\left(\lambda_{\varepsilon}\right) + \sum_{j \in P} \log(c_j) + \frac{\lambda_{\varepsilon}^2}{4}\sum_{j \in P} c_j^2,
\end{equation}
with the convention that $V(\emptyset,\lambda_{\varepsilon})=0$.

Next we wish to maximize $V(P,\lambda_{\varepsilon})$ over all $P\subset \mathcal{P}_n=[-n,n]\cap \Z$, of which there are finitely many. Let  $P^{\ast}_{n}$ be the subset where this maximum is attained. If we define the function $f$ as $f(r):= \sqrt{\pi} r\,e^{r^2/4}$, it is easy to see that 
\begin{equation}\label{eq:def_function_f}
P^{\ast}_{n}=\mbox{arg max}_{P\subset \mathcal{P}_n} \prod_{j \in P} f\left(\lambda_{\varepsilon}\,c_j\right).
\end{equation}
The function $f$ is monotone increasing in $r$, and the coefficients $c_j$ are symmetric in $j$ and decreasing in $|j|$. As a result, the optimal set $P^{\ast}_{n}$ will be of the form $[-k^{\ast},k^{\ast}]\cap\Z$ for some $k^{\ast}\in\{0,\ldots, n\}$. In fact, we can rewrite the set $P^{\ast}_{n}=P^{\ast}_{n}(\lambda_{\varepsilon})$ as follows:
\begin{equation}\label{eq:optimalset}
P^{\ast}_{n}(\lambda_{\varepsilon}) := \left\{j\in [-n,n]\cap\Z: f\left(\lambda_{\varepsilon} \,c_j\right) > 1\right\} = \left\{j\in [-n,n]\cap\Z: c_j > \mathcal{C}/\lambda_{\varepsilon}\right\},
\end{equation}
where\footnote{We can invert $f$ since it is monotone increasing. The constant $\mathcal{C}$ can be explicitly computed, and it is approximately equal to $0.5264$.} $\mathcal{C}:=f^{-1}(1)$. Note that for all $n$ such that $c_n<\mathcal{C}/\lambda_{\varepsilon}$ the set $P^{\ast}_n(\lambda_{\varepsilon})$ is \emph{independent of $n$}. Therefore, we will assume that $n$ is large enough and write $P^{\ast}(\lambda_{\varepsilon})$ from now on (since our goal is to take the limit as $n \rightarrow \infty$).

As we mentioned before, $P^{\ast}=[-k^{\ast},k^{\ast}]\cap\Z$ where $k^{\ast}$ depends on $\lambda_{\varepsilon}$, but not on $n$. The positive integer $k^{\ast}$ is characterized by the property
\begin{equation}\label{eq:about_kast}
\lambda_{\varepsilon} \in \left( \frac{\mathcal{C}}{c_{k^{\ast}}}, \frac{\mathcal{C}}{c_{k^{\ast}+1}}\right].
\end{equation}
With this in mind, we rewrite $F_n$ in \eqref{eq:def_Fn} as 
\[
F_n (\lambda_{\varepsilon}) = e^{V(P^{\ast},\, \lambda_{\varepsilon})} \, \sum_{P\subset \mathcal{P}_n} e^{V (P, \, \lambda_{\varepsilon})- V(P^{\ast},\, \lambda_{\varepsilon})}.
\]

In order to take the limit as $n\rightarrow \infty$ in the previous expression, and to conclude that $\Lambda_{\varepsilon} (\varepsilon^{-1}\lambda)$ is finite, we must estimate the difference $V (P, \, \lambda_{\varepsilon})- V(P^{\ast},\, \lambda_{\varepsilon})$. Given the construction of $P^{\ast}$, we may always bound it by $0$. Unfortunately, this gives rise to the following naive estimate 
\[
F_n (\lambda_{\varepsilon}) \leq e^{V(P^{\ast},\, \lambda_{\varepsilon})} \, \sum_{P\subset \mathcal{P}_n} 1 =  e^{V(P^{\ast},\, \lambda_{\varepsilon})}\, 2^{2n+1}
\]
which is insufficient. For this reason, a careful analysis of the function
\begin{equation}\label{eq:G_P}
G(P):= V (P, \, \lambda_{\varepsilon})- V(P^{\ast},\, \lambda_{\varepsilon})
\end{equation}
is necessary. We present this result in the following proposition.

\begin{prop}\label{thm:final_estimate_G} Fix $b >0$. Then, for any subset $P \subset [-n,n]\cap\Z$ the following estimate holds
\begin{equation}\label{eq:final_estimate_G}
G(P)\leq - \frac{b}{2} \cdot \sum_{j\in \hat{P}} \big| |j|-k^{\ast}\big|.
\end{equation}
where $\hat{P}:= \left(P \Delta P^*\right) - \left\{\pm \, (k^{\ast} + 1) \right\}$, and $\Delta$ denotes the symmetric difference between two sets.
\end{prop}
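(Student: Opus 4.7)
The plan is to write $V(P,\lambda_\varepsilon)$ as a sum of per-index contributions $v(j):= \log f(\lambda_\varepsilon c_j)$, which gives the clean splitting
\[
G(P) \;=\; \sum_{j\in P\setminus P^*} v(j) \;-\; \sum_{j\in P^*\setminus P} v(j),
\]
and to treat each piece separately. By the defining characterization of $P^*$ in \eqref{eq:optimalset}, every term in the first sum is $\le 0$ and every term in the second sum is $\ge 0$, so $G(P)\le 0$ trivially. The real task is to upgrade this to the linear rate $\frac{b}{2}\bigl||j|-k^*\bigr|$.

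Next I would rewrite
\[
v(j) \;=\; -\frac{\mathcal{C}^2}{4}\;+\;\log\!\left(\frac{\lambda_\varepsilon c_j}{\mathcal{C}}\right)\;+\;\frac{(\lambda_\varepsilon c_j)^2}{4},
\]
using the identity $\log(\sqrt{\pi}\,\mathcal{C})=-\mathcal{C}^2/4$, which follows from $f(\mathcal{C})=1$. The bracketing \eqref{eq:about_kast} combined with the explicit form $c_j=e^{-b|j|}$ yields the two-sided envelope $e^{b(k^*-|j|)} \le \lambda_\varepsilon c_j/\mathcal{C} \le e^{b(k^*+1-|j|)}$. Plugging the upper bound into the formula for $v(j)$ gives, for $j\in P\setminus P^*$,
\[
v(j) \;\le\; -b(|j|-k^*-1)\;+\;\frac{\mathcal{C}^2}{4}\bigl(e^{-2b(|j|-k^*-1)}-1\bigr)\;\le\; -b(|j|-k^*-1),
\]
since the exponential correction is non-positive for $|j|\ge k^*+1$. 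Symmetrically, plugging in the lower bound gives $v(j)\ge b(k^*-|j|)$ for $j\in P^*\setminus P$, because the analogous correction term is then non-negative.

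These per-index estimates translate into the desired slope $\frac{b}{2}$ on $\hat P$: on the $P^*\setminus P$ side the inequality $b(k^*-|j|)\ge \frac{b}{2}(k^*-|j|)$ is automatic, while on the $P\setminus P^*$ side one uses the elementary algebra $2(|j|-k^*-1)\ge |j|-k^*$, valid exactly when $|j|\ge k^*+2$. Summing over the disjoint decomposition $\hat P = \bigl((P\setminus P^*)\setminus\{\pm(k^*+1)\}\bigr) \sqcup (P^*\setminus P)$ — disjoint since $\pm(k^*+1)\notin P^*$ — and noting that the possibly excluded indices $j=\pm(k^*+1)$ contribute $v(j)\le 0$ to $G(P)$ and can therefore be safely dropped from an upper bound, one obtains \eqref{eq:final_estimate_G}. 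The main obstacle, and the precise reason for the exclusion in the definition of $\hat P$, is the boundary case $|j|=k^*+1$: there $v(j)$ can be arbitrarily close to zero (as $\lambda_\varepsilon$ approaches $\mathcal{C}/c_{k^*+1}$), so no linear-in-$\bigl||j|-k^*\bigr|$ bound is available for these two indices and they must be discarded from the outset rather than salvaged.
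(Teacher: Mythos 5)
Your proof is correct and follows essentially the same route as the paper's: the key ingredients are identical (the additive split of $G$ over $P\setminus P^*$ and $P^*\setminus P$, the cancellation of the constant $\tfrac12\log\pi + \log\mathcal{C} + \tfrac{\mathcal{C}^2}{4}=0$ coming from $f(\mathcal{C})=1$, the envelope from \eqref{eq:about_kast}, and the elementary inequality that converts slope $b$ with offset $1$ into slope $b/2$ once $|j|\ge k^*+2$). The paper organizes the argument into three explicit cases ($P=P^*\cup A$, $P=P^*-B$, and the mixed case), whereas you treat the two index families $P\setminus P^*$ and $P^*\setminus P$ uniformly in one pass, which is a slightly leaner presentation of the same calculation.
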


\begin{rk}
Throughout the proof of \Cref{thm:final_estimate_G}, and in the rest of the paper, we will assume that $c_j=e^{-b|j|}$ for the sake of conciseness. It is easy to see that one can adapt our arguments for the case $c_j=e^{-b|j|^2}$.

%The other case [...] replacing \eqref{eq:estimateMax} by 
%\[
%\big| |j|^2-(k^{\ast} + 1)^2\big| = \big| |j|-k^{\ast} - 1\big| \cdot \big| |j|+k^{\ast} + 1 \big| \ge \frac{1}{2} \big| |j|-k^{\ast}\big|.
%\]
\end{rk}

\begin{proof}
First of all, by \eqref{eq:V_Pn} and \eqref{eq:G_P}, we have
\begin{align}\label{eq:additive_G}
G(P) & = \sum_{j\in P} \log\left( f(\lambda_{\varepsilon} c_j)\right)\ - \sum_{j\in P^{\ast}} \log\left( f(\lambda_{\varepsilon} c_j)\right)\\
&  = \sum_{j\in P - P^{\ast}} \log\left( f(\lambda_{\varepsilon} c_j)\right)\   - \sum_{j\in P^{\ast} - P} \log\left( f(\lambda_{\varepsilon} c_j)\right)\nonumber
\end{align}
 for any $P\subset \mathcal{P}_n$, with $f$ as in \eqref{eq:def_function_f}. We reduce the proof of \eqref{eq:final_estimate_G} to three different cases: when $P = P^* \cup A$, with $A \cap P^* = \emptyset$; when $P = P^* - B$, with $B \subseteq P^*$; and finally, a mixed case when $P = \left(P^* \cup A\right) - B$, with $A$ and $B$ as in the previous cases.
 
 \noindent {\bf Case 1:} Let us start with $P = P^* \cup A$, with $A \cap P^* = \emptyset$. By \eqref{eq:additive_G}, we have that
 \begin{equation}\label{eq:boundGcase1}
G(P) = \sum_{j\in A}\log\left( f(\lambda_{\varepsilon} c_j)\right)=\sum_{j\in A}\left(\frac{1}{2}\log \pi + \log (\lambda_{\varepsilon} \, c_j ) + \frac{\lambda_{\varepsilon}^2 \, c_j^2}{4}\right).
\end{equation}
Let us study each of the terms in the previous expression. By \eqref{eq:about_kast},
\begin{equation}\label{eq:controlG1}
\frac{\mathcal{C}^2}{4}\, e^{-2b (|j|-k^{\ast})}= \frac{\mathcal{C}^2}{4} \, \frac{c_j^2}{c_{k^{\ast}}^2}< \frac{\lambda_{\varepsilon}^2 \, c_j^2}{4} \leq \frac{\mathcal{C}^2}{4} \, \frac{c_j^2}{c_{k^{\ast}+1}^2} = \frac{\mathcal{C}^2}{4}\, e^{-2b (|j|-k^{\ast}-1)}.
\end{equation}
Similarly, we have that 
\begin{align}\label{eq:controlG2}
-b (|j|-k^{\ast})+ \log\mathcal{C}  = \log \left(\frac{\mathcal{C}\, c_j}{c_{k^{\ast}}}\right) \leq  \log \left(\lambda_{\varepsilon} c_j \right) \leq \log \left(\frac{\mathcal{C}\, c_j}{c_{k^{\ast}+1}}\right)
& = -b (|j|-k_{\ast}-1)+ \log\mathcal{C}.
\end{align}

\noindent Applying \eqref{eq:controlG1} and \eqref{eq:controlG2} to \eqref{eq:boundGcase1}, we obtain
\begin{align*}
G(P) & \leq \frac{|A|}{2}\log \pi + \sum_{j\in A} \left(-b (|j|-k^{\ast}-1) + \log\mathcal{C} + \frac{\mathcal{C}^2}{4}\, e^{-2b (|j|-k^{\ast}-1)}\right) \\ & \le |A|\left(\frac{1}{2}\log \pi + \log\mathcal{C} + \frac{\mathcal{C}^2}{4}\right) - \sum_{j\in A} b (|j|-k^{\ast}-1) = - b \sum_{j\in A} (|j|-k^{\ast}-1),
\end{align*}
where the last equality follows by the definition of $\mathcal{C}$. In the second inequality we used that, since $j\in A$ and $A \cap P^* = \emptyset$, then $|j|>k^{\ast}$ and hence, the right-hand side of \eqref{eq:controlG1} is bounded by $\frac{\mathcal{C}^2}{4}$. Using the fact that $|j|>k^{\ast}$ once more, we can rewrite the previous estimate as
\begin{align*}
G(P) \le - b \, \sum_{j\in A}\left(\big| |j|-k^{\ast}\big|-1\right).
\end{align*}

\noindent Finally, noting that
\begin{equation}\label{eq:estimateMax}
\big| |j|-k^{\ast}\big|-1 \ge \frac{1}{2}\big| |j|-k^{\ast}\big|,
\end{equation}
for all $j \in \Z - \left\{\pm\,(k^{\ast} - 1), \pm \, k^{\ast}, \pm \, (k^{\ast} + 1) \right\}$, we obtain
\begin{equation}\label{eq:final_estimate_G_case1}
G(P) \le - \frac{b}{2} \, \sum_{j\in \hat{P}}\big| |j|-k^{\ast}\big|,
\end{equation}
where in this case $\hat{P} = A - \left\{\pm \, (k^{\ast} + 1) \right\}$. This concludes the proof of \eqref{eq:final_estimate_G} for the first case.

\noindent {\bf Case 2:} Now, let us assume that $P = P^* - B$, with $B \subseteq P^*$. Notice that this implies that $|j|\leq k^{\ast}$, for any $j \in B$. In this case, \eqref{eq:additive_G} becomes
 \begin{equation}\label{eq:boundGcase2}
G(P) = -\sum_{j\in B}\log\left( f(\lambda_{\varepsilon} c_j)\right)=-\sum_{j\in B}\left(\frac{1}{2}\log \pi + \log (\lambda_{\varepsilon} \, c_j ) + \frac{\lambda_{\varepsilon}^2 \, c_j^2}{4}\right).
\end{equation}
Using \eqref{eq:controlG1} and \eqref{eq:controlG2} again, we have that
\begin{align*}
G(P) & \leq -\frac{|B|}{2}\log \pi - \sum_{j\in B} \left(-b (|j|-k^{\ast}) + \log\mathcal{C} + \frac{\mathcal{C}^2}{4}\, e^{-2b (|j|-k^{\ast})}\right) \\ & \le -|B|\left(\frac{1}{2}\log \pi + \log\mathcal{C} + \frac{\mathcal{C}^2}{4}\right) - b \sum_{j\in B} \big| |j|-k^{\ast}\big|= - b \sum_{j\in B} \big| |j|-k^{\ast}\big|.
\end{align*}
Here, we have used that $|j|\leq k^{\ast}$, for any $j \in B$, and the definition of $\mathcal{C}$. Noting that for this case $\hat{P} = B$, we can conclude
 \begin{equation}\label{eq:final_estimate_G_case2}
G(P) \le - \frac{b}{2} \sum_{j\in B} \big| |j|-k^{\ast}\big| = - \frac{b}{2} \sum_{j\in \hat{P}} \big| |j|-k^{\ast}\big|,
\end{equation}
as we wanted to prove.

\noindent {\bf Case 3:} Finally, for the mixed case when $P = \left(P^* \cup A\right) - B$, with $A \cap P^* = \emptyset$ and $B \subseteq P^*$, the expression in \eqref{eq:additive_G} tells us that
 \begin{equation}\label{eq:boundGcase3}
G(P) = \sum_{j\in A}\log\left( f(\lambda_{\varepsilon} c_j)\right) - \sum_{j\in B}\log\left( f(\lambda_{\varepsilon} c_j)\right).
\end{equation}
Hence, combining \eqref{eq:boundGcase1} and \eqref{eq:final_estimate_G_case1} with \eqref{eq:boundGcase2} and \eqref{eq:final_estimate_G_case2}, the right-hand side of \eqref{eq:boundGcase3} can be bounded by
 \begin{equation}
G(P) \le - \frac{b}{2} \sum_{j\in A - \left\{\pm \, (k^{\ast} + 1) \right\}} \big| |j|-k^{\ast}\big| - \frac{b}{2} \sum_{j\in B} \big| |j|-k^{\ast}\big| = - \frac{b}{2} \sum_{j\in \hat{P}} \big| |j|-k^{\ast}\big|,
\end{equation}
where in this case $\hat{P} = \left(A \cup B\right) - \left\{\pm \, (k^{\ast} + 1) \right\}$. This concludes the proof of \Cref{thm:final_estimate_G}.
\end{proof}

\noindent Based on this result, it is natural to introduce the quantity
\begin{equation}\label{eq:def_normPn}
\norm{P} := \sum_{j\in \hat{P}} \big| |j|-k^{\ast}\big|,
\end{equation}
for each subset $P \subset [-n,n]\cap\Z$, where $\hat{P}:= \left(P \Delta P^*\right) - \left\{\pm \, (k^{\ast} + 1) \right\}$, with the convention that the sum over the empty set is zero.  Back to our main problem, \Cref{thm:final_estimate_G} allows us to estimate $F_n$ in \eqref{eq:def_Fn} as follows:
\[
F_n (\lambda_{\varepsilon})= e^{V(P^{\ast},\, \lambda_{\varepsilon})} \, \sum_{P\subset \mathcal{P}_n} e^{G(P)} \le e^{V(P^{\ast},\, \lambda_{\varepsilon})} \,\sum_{P\subset \mathcal{P}_n} e^{- \frac{b}{2}\,\norm{P}}.
\]
Hence, our next objective is to show that the latter sum is bounded uniformly on $n$ and $\lambda_{\varepsilon}$, i.e.
\begin{equation}\label{eq:prelimit_n}
\sum_{P\subset \mathcal{P}_n} e^{-\frac{b}{2} \, \norm{P}} \lesssim_b 1
\end{equation}
where the implicit constant is independent of $\lambda_{\varepsilon}$ and $n$. In order to do so, we rewrite this sum as follows:
\begin{equation}\label{eq:total_sum2}
  \sum_{P\subset \mathcal{P}_n} e^{-\frac{b}{2}\, \norm{P}} \leq \sum_{m=0}^{\infty} e^{-\frac{b}{2}\,m}\,   |\{P\subseteq [-n,n]\cap \Z : \norm{P}=m\}|.
\end{equation}

It is easy to see that $0 \le \norm{P} \le (2n+1)\, n$. Therefore, the sum on the right-hand side of \eqref{eq:total_sum2} will always be finite since many level sets are empty. The next step is to estimate how many subsets $ P\subseteq [-n,n]\cap \Z$ satisfy $\norm{P}=m$.

\begin{lem}\label{thm:level_sets} The cardinality of the level sets of $\norm{\cdot}$, as defined in \eqref{eq:def_normPn}, satisfies
\begin{equation}\label{eq:level_sets}
|\{P \subseteq [-n,n]\cap \Z \mid \norm{P}=m\}|  \leq p(m)\, 4^{\sqrt{8m}+1},
\end{equation}
where $p(m)$ is the number of partitions of $m$.
\end{lem}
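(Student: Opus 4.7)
My plan is to associate to each subset $P\subseteq[-n,n]\cap\Z$ with $\norm{P}=m$ a partition $\lambda(P)$ of $m$, namely the multiset $\{\big||j|-k^{\ast}\big|:j\in\hat{P},\ \big||j|-k^{\ast}\big|\geq 1\}$ of the positive weights appearing in $\hat{P}$. With this classification,
\[
|\{P\subseteq [-n,n]\cap\Z: \norm{P}=m\}|\leq p(m)\cdot \sup_{\lambda\vdash m} N(\lambda),
\]
where $N(\lambda)$ denotes the number of admissible $P$ with $\lambda(P)=\lambda$. It therefore suffices to show that $N(\lambda)\lesssim 4^{\sqrt{8m}}$ uniformly in $\lambda\vdash m$, and then absorb the resulting multiplicative constants into the exponent $4^{\sqrt{8m}+1}$.

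To bound $N(\lambda)$, I would first observe that the data of $P$ is equivalent to the data of $\hat{P}$ together with the intersection $(P\Delta P^{\ast})\cap\{\pm(k^{\ast}+1)\}$, the latter contributing at most a factor of $4$. Next, each weight $\ell\geq 2$ is attained by at most four integers in $[-n,n]\cap\Z$ (namely $\pm(k^{\ast}-\ell)$ and $\pm(k^{\ast}+\ell)$, when valid), while weight $\ell=1$ is attained only by $\pm(k^{\ast}-1)$, because $\pm(k^{\ast}+1)$ is explicitly excluded from $\hat{P}$ by construction. Denoting by $m_{\ell}$ the multiplicity of $\ell$ in $\lambda$, the number of ways to choose the positive-weight part of $\hat{P}$ realizing these multiplicities is at most $\prod_{\ell\geq 1}\binom{4}{m_{\ell}}$. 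Finally, I would incorporate an extra factor of $4$ to account for the possible inclusion in $\hat{P}$ of the weight-zero elements $\pm k^{\ast}$, which sit in $\hat{P}$ but contribute nothing to $\norm{P}$.

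The key remaining ingredient is a bound on $\prod_{\ell\geq 1}\binom{4}{m_{\ell}}$. I plan to exploit the fact that the support $S=\{\ell\geq 1:m_{\ell}\geq 1\}$ of $\lambda$ satisfies
\[
m=\sum_{\ell}\ell\, m_{\ell}\geq \sum_{\ell\in S}\ell \geq \frac{|S|(|S|+1)}{2},
\]
so $|S|\leq \sqrt{2m}$. Combined with the crude factorwise estimate $\binom{4}{m_{\ell}}\leq 2^{4}=16$, this yields $\prod_{\ell\in S}\binom{4}{m_{\ell}}\leq 16^{|S|}\leq 16^{\sqrt{2m}}=4^{\sqrt{8m}}$. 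Multiplying by the auxiliary constants from the previous step and summing over the $p(m)$ partitions of $m$ gives the desired bound.

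The main obstacle lies in the careful combinatorial book-keeping: one must simultaneously track the asymmetry at weight $\ell=1$ (where only two elements qualify), the neutral role of the weight-zero elements $\pm k^{\ast}$, and the explicit exclusion of $\pm(k^{\ast}+1)$ from $\hat{P}$, while ensuring that the resulting bound is sub-exponential in $m$. Once the support bound $|S|\leq \sqrt{2m}$ is in place, the remaining combinatorial estimates reduce to routine binomial inequalities, and the final constant can be tightened (for instance, by using $\binom{2}{m_{1}}\leq 2$ at the weight-one contribution) to match the stated exponent $\sqrt{8m}+1$.
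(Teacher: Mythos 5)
Your proof follows the same strategy as the paper's: classify the admissible sets $P$ by the partition of $m$ carried by the positive weights in $\hat{P}$, bound the number of realizations of each partition, and invoke the $O(\sqrt{m})$ bound on the number of parts. The only substantive difference is bookkeeping: the paper bounds the per-partition count by $4^M$, where $M\leq\sqrt{8m}$ counts parts with multiplicity (each weight being realized by at most four integers), while you write it as $\prod_\ell\binom{c_\ell}{m_\ell}\leq 16^{|S|}$ with $|S|\leq\sqrt{2m}$ counting \emph{distinct} parts; both routes yield $4^{\sqrt{8m}}$. You are actually more careful than the paper on one point: you track the weight-zero elements $\pm k^{\ast}$, which can lie in $\hat{P}$ (namely, when they are absent from $P$) without contributing to $\norm{P}$, and you therefore multiply the count by an extra factor of $4$. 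The paper's algorithm (Steps 1--3) never deletes $\pm k^{\ast}$ from $P$ and so silently drops that factor; as a consequence the stated exponent $\sqrt{8m}+1$ is really off by one, which is already visible at $m=0$, where there are $16$ subsets with $\norm{P}=0$ rather than the claimed $p(0)\cdot 4 = 4$. Your $p(m)\,4^{\sqrt{8m}+2}$ is the honest output of either argument. The discrepancy is immaterial downstream, since only growth of the form $e^{C\sqrt{m}}$ enters the convergence argument for \eqref{eq:prelimit_n}, and I would therefore keep the extra factor of $4$ rather than pursue the $\ell=1$ refinement you sketch, which cannot in any case rescue the $m=0$ edge case.
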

\begin{proof}
Fix $m\in\N_0$. We will bound the left-hand side of \eqref{eq:level_sets} by counting the iterations of an algorithm whose output will include all the possible subsets $P\subseteq [-n,n]\cap \Z$ such that $\norm{P}=m$. As in the proof of \Cref{thm:final_estimate_G}, it will be convenient to use the characterization of $P$ as $\left(P^* \cup A\right) - B$, with $A \cap P^* = \emptyset$ and $B \subseteq P^*$. Recall that in this case, $\hat{P}$ is given by $\left(A \cup B\right) - \left\{\pm \, (k^{\ast} + 1) \right\}$.
\begin{enumerate}
\item \textbf{Step 1:} We write $m= \sum_{j=1}^{M} a_j$ for integers $a_j\geq 1$, and a certain $M \in \N_0$.
\item \textbf{Step 2:} For each $a_j$ from Step 1, we choose one element $j \in [-n,n]\cap \Z$ among $k^{\ast} -a_j$, $k^{\ast} +a_j$, $-k^{\ast} -a_j$, or $-k^{\ast} +a_j$. If $|j| \le k^*$, we add it to $B$. Otherwise, we add it to $A$.
\item \textbf{Step 3:} We add any elements in $\left\{\pm \, (k^{\ast} + 1) \right\}$ to our set $P$.
\end{enumerate}

Since there are $p(m)$ possible options in Step 1, and for each $a_j$ in Step 2, there are 4 available choices, we have that the number of possible sets $P$ that can be constructed with this algorithm is bounded by $p(m) \, 4^{M+1}$. The last step to conclude \eqref{eq:level_sets} is to estimate $M$. Given Step 2, there can be at most 4 possible repetitions of each $a_j$ in $\sum_{j=1}^{M} a_j$. Therefore,
\begin{equation*}
 m = \sum_{j=1}^{M} a_j\geq 4\cdot \sum_{j=1}^{M/4} j = \frac{M}{2}\, \left(\frac{M}{4}+1\right)\geq \frac{M^2}{8}.
 \end{equation*}
As a consequence, $M\leq \sqrt{8m}$ and the result follows.
\end{proof}

\noindent By \eqref{eq:total_sum2} and \Cref{thm:level_sets}, we have that 
\begin{equation*}
\sum_{P\subset \mathcal{P}_n} e^{-\frac{b}{2}\, \norm{P}} \leq \sum_{m=0}^{\infty} e^{-\frac{b}{2}\,m}\,  p(m)\, 4^{\sqrt{8m} + 1},
\end{equation*}
where $p(m)$ is the number of partitions of $m$. Notice that the right-hand side of the previous expression is independent of $\lambda_{\varepsilon}$ and $n$. By Theorem 6.3 in \cite{Andrews},
\begin{equation*}
p(m)\lesssim \exp \left( \pi \sqrt{\frac{2}{3}} \, \sqrt{m}\right).
\end{equation*} 
Consequently, in order to establish the uniform upper bound in \eqref{eq:prelimit_n}, it is enough to prove that the series
\begin{equation*}
\sum_{m=0}^{\infty} \exp\left(-\frac{b}{2}\,m +\pi \sqrt{\frac{2}{3}} \, \sqrt{m} + \log(4)\left(\sqrt{8m} + 1\right) \right)
\end{equation*}
converges, which follows easily by the root test.

\noindent Finally, we are ready to compute the limit in \eqref{eq:GE-MGF-limit}:
\begin{align*}
& \lim_{\varepsilon \rightarrow 0^{+}} \varepsilon \,\Lambda_{\varepsilon} (\varepsilon^{-1} \, \lambda) \leq  \lim_{\varepsilon\rightarrow 0^{+}} \varepsilon \, \lim_{n\rightarrow \infty} \log F_n(\lambda_{\varepsilon})\\
 & = \lim_{\varepsilon\rightarrow 0^{+}} \varepsilon \, V(P^{\ast}(\lambda_{\varepsilon}),\lambda_{\varepsilon})
 +\lim_{\varepsilon\rightarrow 0^{+}} \varepsilon \lim_{n\rightarrow \infty} \, \log\left(\sum_{P\subset \mathcal{P}_n} e^{G(P)}\right).
\end{align*}
The latter limit is zero given that the sum is uniformly bounded on $\lambda_{\varepsilon}$ and $n$ by \Cref{thm:final_estimate_G} and \Cref{thm:level_sets}. Therefore, we only need to compute the limit that corresponds to $V$. First, note that $|P^{\ast}|=2\,k^{\ast}+1$. Using \eqref{eq:about_kast} and the exponential decay of the $c_k$ we can estimate $k^{\ast}$ in terms of $\lambda_{\varepsilon}$:
\[
\mathcal{C}\, e^{b k^{\ast}} < \lambda_{\varepsilon} \leq \mathcal{C}\, e^{b k^{\ast} + b}\quad  \Longrightarrow \quad b \, k^{\ast} \sim \log (\lambda_{\varepsilon}).
\]
Then, using \eqref{eq:V_Pn}, we have:
\[
\begin{split}
 V(P^{\ast}(\lambda_{\varepsilon}),\lambda_{\varepsilon})-\frac{\lambda_{\varepsilon}^2}{4}\sum_{|j|\leq k^{\ast}} c_j^2 & =(2k^{\ast}+1) \, \log(\lambda_{\varepsilon}) +\frac{2k^{\ast}+1}{2}\, \log( \pi) + \sum_{|j|\leq k^{\ast}} \log(c_j)\\
 & \lesssim_b [\log(\lambda_{\varepsilon})]^2 - \sum_{|j|\leq k^{\ast}} b\,|j| \lesssim_b [\log(\lambda_{\varepsilon})]^2.
 \end{split}
\]
As a consequence, 
\begin{equation}\label{eq:Lambda_upperb}
\lim_{\varepsilon \rightarrow 0^{+}} \varepsilon \,\Lambda_{\varepsilon} (\varepsilon^{-1} \, \lambda) \leq \lim_{\varepsilon\rightarrow 0^{+}} \varepsilon \, V(P^{\ast}(\lambda_{\varepsilon}),\lambda_{\varepsilon})=\lim_{\varepsilon\rightarrow 0^{+}} \varepsilon\, \frac{\lambda_{\varepsilon}^2}{4}\sum_{|j|\leq k^{\ast}} c_j^2 = \frac{\lambda^2}{4}\, \sum_{j\in\Z} c_j^2.
\end{equation}
Here we used that $\lambda_{\varepsilon} = \varepsilon^{-1/2}\lambda$, as well as the fact that $k^{\ast}\sim_b \log (\lambda_{\varepsilon})\rightarrow\infty$ as  $\varepsilon\rightarrow 0^{+}$.

Note that the upper bound for the limit in \eqref{eq:Lambda_upperb}  is due to the fact that we estimated from above $\P (X_k>0)\leq 1$ in \eqref{eq:mainsum_upperb}. Similarly, one can estimate from below $\P (X_k>0)\geq 1/2$, since $X_k$ is a normal random variable with positive mean. This leads to a new definition of $F_n(\lambda_{\varepsilon})$ in \eqref{eq:def_Fn}, given by
\[
F_{n}(\lambda_{\varepsilon})=\prod_{|k|\leq n} \left( 1+ \frac{\sqrt{\pi}}{2}\, \lambda_{\varepsilon}\, c_k \,\exp\left(\frac{\lambda_{\varepsilon}^2 \, c_k^2}{4}\right)\right).
\]
Redefining the function $f$ in \eqref{eq:def_function_f} as $f(r):= \frac{\sqrt{\pi}}{2} r\,e^{r^2/4}$, we can repeat the same steps as above, which yield the lower bound 
\begin{equation}\label{eq:Lambda_lowerb}
\frac{\lambda^2}{4}\, \sum_{j\in\Z} c_j^2 = \lim_{\varepsilon\rightarrow 0^{+}} \varepsilon \, V(P^{\ast}(\lambda_{\varepsilon}),\lambda_{\varepsilon}) = \lim_{\varepsilon\rightarrow 0^{+}} \varepsilon \, \lim_{n\rightarrow \infty} \log F_n(\lambda_{\varepsilon}) \le \lim_{\varepsilon \rightarrow 0^{+}} \varepsilon \,\Lambda_{\varepsilon} (\varepsilon^{-1} \, \lambda). 
\end{equation}

\noindent Since the lower and the upper bounds coincide, we can conclude that
\begin{equation}\label{eq:Lambda_finalValue}
\Lambda(\lambda):= \lim_{\varepsilon \rightarrow 0^+}\varepsilon \Lambda_{\varepsilon}(\varepsilon^{-1}\lambda) = \frac{\lambda^2}{4}\, \sum_{j\in\Z} c_j^2.
\end{equation}
Hence, the function $\Lambda(\cdot)$ is clearly well-defined, takes values in $\R$, and is differentiable, and therefore \Cref{thm:GE} applies. Using the definition of the Fenchel-Legendre transform, it is easy to see that
\begin{equation}\label{eq:Lambda_Star}
\Lambda^*(z):= \sup_{\lambda \in \R}\left(\lambda\,z - \Lambda(\lambda)\right) = \frac{z^2}{\sum_{j\in\Z} c_j^2}.
\end{equation}
Combining \eqref{eq:probUpperBound} and \eqref{eq:GE_LDP_upper}, we can conclude that
\begin{equation*}
 \limsup_{\varepsilon \rightarrow 0^+} \varepsilon \log \P \left( \sup_{x \in \T} |u(t,x)|\geq z_0\varepsilon^{-1/2}\right) \leq \limsup_{\varepsilon \rightarrow 0^+} \varepsilon \log \P \left( \sum_{k \in \Z} c_k R_k \geq z_0\varepsilon^{-1/2}\right) \le - \frac{z_0^2}{\sum_{j\in\Z} c_j^2}.
\end{equation*}
This establishes \eqref{eq:LDP_upper} for $\alpha = 1$ and $I(z) = \frac{z^2}{\sum_{k\in\Z} c_k^2}$, and concludes the proof of \Cref{prop:linearLDP}.

\section{Large Deviations Principle for NLS: subcritical times}
\label{sec:subcritical}

\subsection{The linear approximation}

Consider the NLS equation with a cubic nonlinearity:
\begin{equation}\label{eq:cubicNLS}
\begin{cases}
i\pa_t u + \Delta u = \varepsilon^2 |u|^2\, u \\
u(0,x)=\sum_{k\in\Z} c_k \eta_k e^{ik x} 
\end{cases}
\end{equation}

In order to study this equation, we write it as a system of equations for the Fourier coefficients of the solution $u$. Let $v(t,x):= e^{it\Delta} u(t,x)$ and write
\begin{equation}
v(t,x)=\sum_{k\in\Z} v_k (t)\, e^{ikx}
\end{equation}
so that $v_k(t)$ satisfies
\begin{equation}\label{eq:NLS_Fcoeff}
\begin{cases}
i\partial_t v_k = \varepsilon^2 \, \sum_{k=k_1-k_2+k_3} v_{k_1} \overline{v_{k_2}} v_{k_3}\, e^{-it\Omega}\\
v_k(0)=c_k \eta_k
\end{cases}
\end{equation}
where 
\begin{equation}\label{eq:def_omega}
\Omega = |k_1|^2 - |k_2|^2 + |k_3|^2 -|k|^2.
\end{equation}

We introduce the Fourier-Lebesgue spaces $\Fc L^{s,p}=\Fc L^{s,p} (\T)$ of functions $f:\T \rightarrow\C$ with Fourier coefficients $(f_k)_{k\in\Z}$ given by the norm
\begin{equation}\label{eq:def_FL}
\norm{f}_{\Fc L^{s,p}} = \norm{ \langle k\rangle^{s} f_k}_{\ell_k^p}.
\end{equation}
In this section we will restrict ourselves to the space $\Fc L^{0,1}$.

It will be convenient to have explicit control of the solution to \eqref{eq:cubicNLS} in the space $\Fc L^{0,1}$ for small times. To that end, we have the following local-wellposedness result:

\begin{prop}\label{thm:LWP1}
Let $u_0\in \Fc L^{0,1}$ and 
\begin{equation}\label{eq:LWP_time}
T_{\varepsilon} \sim \varepsilon^{-2} \, \norm{u_0}_{\Fc L^{0,1}}^{-2}.
\end{equation}
For times $0\leq t\leq T_{\varepsilon}$, there is a unique solution to the IVP given by \eqref{eq:cubicNLS} which lives in $\Fc L^{0,1}$. Moreover, there exists some positive constant $C$ (independent of $\varepsilon$, $t$ and $u_0$) such that for all times $0\leq t\leq T_{\varepsilon}$ the following inequality holds:
\begin{equation}\label{eq:bootstrap}
\norm{u(t)-e^{-it\Delta} u_0}_{\Fc L^{0,1}} \leq C\, \varepsilon^{2} \, t\, \left( \norm{u-e^{-it\Delta} u_0}_{L^{\infty}([0,t],\Fc L^{0,1})}^3 + \norm{u_0}_{\Fc L^{0,1}}^3 \right)\ .
\end{equation}
\end{prop}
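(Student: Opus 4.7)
The plan is a standard Duhamel/contraction mapping argument, greatly simplified by working in the Banach algebra $\Fc L^{0,1}$. First I would rewrite the IVP \eqref{eq:cubicNLS} as the integral equation
\begin{equation*}
u(t) = e^{-it\Delta}u_0 - i\varepsilon^2 \int_0^t e^{-i(t-t')\Delta}\bigl(|u|^2 u\bigr)(t')\,dt'.
\end{equation*}
Two elementary facts drive the whole argument: $(i)$ the linear Schr\"odinger group $e^{-it\Delta}$ is an isometry on $\Fc L^{0,1}$, because on the Fourier side it acts by multiplication by $e^{-it|k|^2}$, which has unit modulus; $(ii)$ $\Fc L^{0,1}$ is a Banach algebra under pointwise multiplication, since on the Fourier side it is the Wiener algebra $\ell^1(\Z)$ with convolution, so Young's inequality gives
\begin{equation*}
\bigl\| |u|^2 u\bigr\|_{\Fc L^{0,1}} \leq \|u\|_{\Fc L^{0,1}}^3.
\end{equation*}

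With these in hand, I would apply the Banach fixed point theorem to the Duhamel map $\Phi(u)$ on the closed ball
\begin{equation*}
B_R := \bigl\{u \in C([0,T_\varepsilon]; \Fc L^{0,1}) \,:\, \|u\|_{L^\infty_t \Fc L^{0,1}} \leq R\bigr\}, \qquad R := 2\|u_0\|_{\Fc L^{0,1}}.
\end{equation*}
By $(i)$ and $(ii)$, $\|\Phi(u)\|_{L^\infty_t \Fc L^{0,1}} \leq \|u_0\|_{\Fc L^{0,1}} + \varepsilon^2 T_\varepsilon R^3$, and a parallel difference estimate yields a Lipschitz constant of order $\varepsilon^2 T_\varepsilon R^2$. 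Choosing $T_\varepsilon \sim \varepsilon^{-2}\|u_0\|_{\Fc L^{0,1}}^{-2}$ as in \eqref{eq:LWP_time} forces both quantities to be sufficiently small, so $\Phi$ is a contraction on $B_R$ and one obtains the unique local solution on $[0,T_\varepsilon]$.

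For the bootstrap estimate \eqref{eq:bootstrap}, set $w(t) := u(t) - e^{-it\Delta}u_0$. Duhamel gives
\begin{equation*}
w(t) = -i\varepsilon^2 \int_0^t e^{-i(t-t')\Delta}\bigl(|u|^2 u\bigr)(t')\,dt',
\end{equation*}
so applying $(i)$ and $(ii)$ once more, substituting $u(t') = e^{-it'\Delta}u_0 + w(t')$, and using the elementary bound $(a+b)^3 \leq 4(a^3+b^3)$ for $a,b \geq 0$, one obtains
\begin{equation*}
\|w(t)\|_{\Fc L^{0,1}} \leq \varepsilon^2 \int_0^t \|u(t')\|_{\Fc L^{0,1}}^3\, dt' \leq 4\varepsilon^2 t\Bigl(\|u_0\|_{\Fc L^{0,1}}^3 + \|w\|_{L^\infty([0,t]; \Fc L^{0,1})}^3\Bigr),
\end{equation*}
which is exactly \eqref{eq:bootstrap} with $C=4$.

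I do not anticipate a genuine obstacle here: the space $\Fc L^{0,1}$ is chosen precisely so that the cubic nonlinearity closes trivially, with no multilinear dispersive estimates required. The only point worth emphasizing is the structural form of \eqref{eq:bootstrap}: by isolating $\|w\|$ and $\|u_0\|$ on the right-hand side, rather than the full solution norm $\|u\|$, it later allows one to trade a large deviation of $\|u(t) - e^{-it\Delta}u_0\|$ for a large deviation of the purely Gaussian quantity $\|u_0\|$, which is the mechanism by which \Cref{thm:mainLDP} will upgrade the linear LDP of \Cref{prop:linearLDP} to the nonlinear one in the subcritical regime.
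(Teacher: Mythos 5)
Your proposal is correct and follows essentially the same route as the paper: both hinge on the Banach algebra structure of $\Fc L^{0,1}$ (the Wiener algebra) plus the isometry of the linear flow, set up a contraction on the timescale $T_\varepsilon \sim \varepsilon^{-2}\norm{u_0}_{\Fc L^{0,1}}^{-2}$, and then read off \eqref{eq:bootstrap} from Duhamel with the trivial $(a+b)^3\lesssim a^3+b^3$ split. The only cosmetic difference is that the paper runs the argument in the interaction picture $v = e^{it\Delta}u$, whereas you work directly with $u$ and invoke the isometry explicitly; these are interchangeable.
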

\begin{proof}
Using \eqref{eq:NLS_Fcoeff}, we have that 
\begin{equation}\label{eq:integral_eq}
v_k (t) = v_k(0) -i\, \varepsilon^{2}  \, \int_{0}^{t} \sum_{k=k_1-k_2+k_3} (v_{k_1} \overline{v_{k_2}} v_{k_3})(s)\, e^{-is\Omega} \, ds
\end{equation}
and therefore 
\begin{equation}
\begin{split}
\norm{v(t)}_{\Fc L^{0,1}} \leq & \norm{v(0)}_{\Fc L^{0,1}} + \varepsilon^{2} \, \sum_{k\in\Z} \int_0^t \Big | \sum_{k=k_1-k_2+k_3} (v_{k_1} \overline{v_{k_2}} v_{k_3})(s)\Big | \, ds \\
\leq & \norm{v(0)}_{\Fc L^{0,1}} + \varepsilon^{2} \,  \sum_{k_1,k_2,k_3} \int_0^t \Big |(v_{k_1} \overline{v_{k_2}} v_{k_3})(s)\Big | \, ds \\
\leq & \norm{v(0)}_{\Fc L^{0,1}} + \varepsilon^{2} \,   \int_0^t \norm{v(s)}_{\Fc L^{0,1}}^3\, ds .
\end{split}
\end{equation}
Consequently,
\begin{equation}\label{eq:fixed_point}
\sup_{0\leq t\leq T} \norm{v(t)}_{\Fc L^{0,1}} \leq \norm{v(0)}_{\Fc L^{0,1}} + \varepsilon^{2} \,  T\,  \sup_{0\leq t\leq T} \norm{v(t)}_{\Fc L^{0,1}}^3. 
\end{equation}
Similarly
\begin{equation}\label{eq:fixed_point1}
\sup_{0\leq t\leq T} \norm{v(t)-w(t)}_{\Fc L^{0,1}} \leq  \varepsilon^{2} \,  T\,  \sup_{0\leq t\leq T} \norm{v(t)-w(t)}_{\Fc L^{0,1}}(\norm{v(t)}_{\Fc L^{0,1}}^2+\norm{w(t)}_{\Fc L^{0,1}}^2). 
\end{equation}
A standard argument using a contraction mapping  theorem yields existence, uniqueness and the local time of existence \eqref{eq:LWP_time}. Note that $\norm{u}_{\Fc L^{0,1}}= \norm{v}_{\Fc L^{0,1}}$ so all these results apply to $u(t,x)$.

Finally, we can repeat the argument starting with \eqref{eq:integral_eq} to find
\begin{equation}
\begin{split}
\norm{v(t)-v(0)}_{\Fc L^{0,1}} \leq &\ \varepsilon^{2} \,  t\, \norm{v}_{L^{\infty}([0,t],\Fc L^{0,1})}^3 \\
\leq  &\ C\, \varepsilon^{2} \,  t\,\left( \norm{v-v(0)}_{L^{\infty}([0,t],\Fc L^{0,1})}^3 + \norm{v(0)}_{\Fc L^{0,1}}^3\right).
\end{split}
\end{equation}
We use the fact that $v=e^{it\Delta} u$ and that $\norm{e^{it\Delta} f}_{\Fc L^{0,1}} = \norm{f}_{\Fc L^{0,1}}$ to finish the proof of \eqref{eq:bootstrap}.
\end{proof}

\begin{rk}\label{rk:GWP}

We note that in our case $u_0 \in \cap_{s\geq 0} H^s(\T)$ almost surely. Because of this, the solution in \Cref{thm:LWP1} is actually global. In fact, 
the NLS initial value problem \eqref{eq:cubicNLS} is globally well-posed in $H^s, \, s\geq 0$, \cite{Bourgain-lwp}. This is a consequence of the fact that via the $L^4$ Strichartz estimates one can prove local well-posedness in a small interval of time that is inversely proportional to the mass of the initial data, and then iterate  using the conservation of mass itself.  
%I believe we have GWP in $H^1 (\T)$ (add reference if correct). 
Our initial data a.s. lives there, so the solution $u(t)$ to \eqref{eq:cubicNLS}  is well-defined and exists for all times. Moreover, $H^1 (\T ) \subset \Fc L^{0,1}(\T)$ thanks to the Cauchy-Schwarz inequality:
\[ 
\norm{u}_{\Fc L^{0,1}} = \sum_{k} |u_k| \leq \left(\sum_{k} \langle k\rangle^2 |u_k|^2\right)^{1/2} \cdot \left(\sum_{k} \frac{1}{\langle k\rangle^2}\right)^{1/2} \sim \norm{u}_{H^1}.
\]
\end{rk}

We want to use \Cref{thm:LWP1} to show that as long as our initial datum is not too large, the difference between the solution to the nonlinear equation and the linear approximation remains smaller than the initial datum itself. More precisely, we have that 

\begin{cor}\label{thm:small_error} Fix $\gamma\in (0,1]$, $c_1,c_2>0$ and 
\begin{equation}\label{eq:cond_delta}
    0<\delta < \frac{\gamma}{4}.
\end{equation}
 Then there exists $\varepsilon_0 = \varepsilon_0 (\delta,\gamma,c_1,c_2)$ such that for all $\varepsilon<\varepsilon_0$ the following holds: if $T=c_1\,\varepsilon^{-1+\gamma}$ and $\norm{u_0}_{\Fc L^{0,1}} < c_2\, \varepsilon^{-1/2-\delta}$ then 
\[ 
\norm{u-e^{-it\Delta} u_0}_{L^{\infty}([0,T],\Fc L^{0,1})} < \varepsilon^{-1/2+\delta} .
\]
\end{cor}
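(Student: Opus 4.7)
\textbf{Proof plan for \Cref{thm:small_error}.}  The strategy is a standard continuity/bootstrap argument that feeds the a priori bound \eqref{eq:bootstrap} back into itself. The only real job is to verify that every term on the right-hand side of \eqref{eq:bootstrap} is dominated by $\varepsilon^{-1/2+\delta}$ under the hypotheses, and this is where the sharp constraint $\delta<\gamma/4$ will appear.

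First I would check that the full interval $[0,T]$ sits inside the local existence window $[0,T_\varepsilon]$ given by \eqref{eq:LWP_time}. From $\norm{u_0}_{\Fc L^{0,1}}<c_2\,\varepsilon^{-1/2-\delta}$ we get $T_\varepsilon\gtrsim c_2^{-2}\varepsilon^{-1+2\delta}$, while $T=c_1\varepsilon^{-1+\gamma}$. Since $\gamma>4\delta>2\delta$, the ratio $T/T_\varepsilon\lesssim c_1c_2^{2}\,\varepsilon^{\gamma-2\delta}$ tends to $0$ as $\varepsilon\to 0^{+}$, so there is $\varepsilon_0>0$ (depending on $\gamma,\delta,c_1,c_2$) such that $T\le T_\varepsilon$ and the estimate \eqref{eq:bootstrap} is available on all of $[0,T]$ whenever $\varepsilon<\varepsilon_0$.

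Next, set $M(t):=\norm{u-e^{-is\Delta}u_0}_{L^{\infty}([0,t],\Fc L^{0,1})}$, which is nondecreasing, continuous in $t$, and vanishes at $t=0$ by the continuity of $u$ in $\Fc L^{0,1}$ on $[0,T_\varepsilon]$ (this continuity follows from the fixed-point construction in \Cref{thm:LWP1}). Consider the bootstrap set
\[
E:=\{t\in[0,T]\mid M(t)\leq \varepsilon^{-1/2+\delta}\}.
\]
It is closed and contains $0$. To show $E$ is open in $[0,T]$, fix $t_0\in E$ and insert the bootstrap hypothesis into \eqref{eq:bootstrap}, taking the supremum for $t\in[0,t_0]$:
\[
M(t_0)\ \leq\ C\,\varepsilon^{2}\,t_0\,\bigl(\varepsilon^{-3/2+3\delta}+c_2^{3}\,\varepsilon^{-3/2-3\delta}\bigr).
\]
For $\varepsilon$ small the second term dominates, so $M(t_0)\lesssim c_1 c_2^{3}\,\varepsilon^{-1/2+\gamma-3\delta}$. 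The condition $\delta<\gamma/4$ is exactly what makes $\gamma-3\delta>\delta$, so this bound equals $\varepsilon^{\gamma-4\delta}\cdot\varepsilon^{-1/2+\delta}=o(\varepsilon^{-1/2+\delta})$. Shrinking $\varepsilon_0$ if necessary, we conclude $M(t_0)\leq \tfrac{1}{2}\varepsilon^{-1/2+\delta}$, strictly below the threshold; continuity of $M$ then gives a neighborhood of $t_0$ still in $E$, so $E$ is open.

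Connectedness of $[0,T]$ forces $E=[0,T]$, and in fact the improved bound just derived gives the strict inequality required by the statement. The only potential pitfall is the bookkeeping of exponents: one must simultaneously ensure (i) that the LWP time covers $[0,T]$ (requiring $\gamma>2\delta$) and (ii) that the trilinear nonlinear error $\varepsilon^{2}T\norm{u_0}_{\Fc L^{0,1}}^{3}$ is smaller than the allowed error $\varepsilon^{-1/2+\delta}$ (requiring $\gamma>4\delta$). Both are captured by the single hypothesis $\delta<\gamma/4$, so no finer iteration over sub-intervals is needed.
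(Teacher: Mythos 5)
Your proof is correct and follows essentially the same bootstrap route as the paper: establish $T\le T_\varepsilon$ from $\gamma>2\delta$, feed the a priori bound $\varepsilon^{-1/2+\delta}$ back into \eqref{eq:bootstrap}, and close the loop using $\gamma>4\delta$ to obtain the improved constant $\tfrac12$. The paper phrases the continuity argument via $\tau:=\sup\{t\mid M(t)<\varepsilon^{-1/2+\delta}\}$ and keeps both error terms in the explicit smallness condition \eqref{eq:cond_epsilon0}, while you phrase it as an open--closed argument and observe that the $c_2^3\varepsilon^{-1/2+\gamma-3\delta}$ term dominates; these are cosmetic differences only.
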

\begin{proof}
Suppose that $\norm{u_0}_{\Fc L^{0,1}} < c_2\, \varepsilon^{-1/2-\delta}$. Then \eqref{eq:LWP_time} and \eqref{eq:cond_delta} guarantee that 
\[
T_{\varepsilon} \geq c_2^{-2} \varepsilon^{-1+2\delta} \gg c_1 \varepsilon^{-1+\gamma}=T
\]
if $\varepsilon<\varepsilon_0$ is small enough.

Let us further reduce $\varepsilon_0$ (if necessary) so that the following condition is true for all $\varepsilon<\varepsilon_0$: 
\begin{equation}\label{eq:cond_epsilon0}
C\, c_1\, \left( \varepsilon^{-1/2+\gamma+3\delta} + c_2^3\,\varepsilon^{-1/2+\gamma-3\delta} \right) \leq \frac{1}{2} \,\varepsilon^{-1/2+\delta},
 \end{equation}
where $C$ is the positive constant from \eqref{eq:bootstrap}. Note that this is possible thanks to \eqref{eq:cond_delta}.

For $\varepsilon\leq \varepsilon_0$, let us define
\[ \tau:=\sup\left\lbrace t \in [0,T] \mid \norm{u-e^{-is\Delta} u_0}_{L^{\infty}([0,t],\Fc L^{0,1})} < \varepsilon^{-1/2+\delta}\right\rbrace. \]
We would like to show that $\tau=T$ thanks to a bootstrap argument. For $t\leq \tau$, \eqref{eq:bootstrap} yields:
\begin{equation}
\begin{split}
\norm{u(t)-e^{-it\Delta} u_0}_{\Fc L^{0,1}} \leq &\ C\, \varepsilon^{2} \, T\, \left( \varepsilon^{-3/2+3\delta} + c_2^3\,\varepsilon^{-3/2-3\delta} \right) \\
\leq &\ C\, c_1\,\varepsilon^{1+\gamma}\, \left( \varepsilon^{-3/2+3\delta} + c_2^3\,\varepsilon^{-3/2-3\delta} \right) \\
\leq &\ C\, c_1\, \left( \varepsilon^{-1/2+\gamma+3\delta} + c_2^3\,\varepsilon^{-1/2+\gamma-3\delta} \right) .
\end{split}
\end{equation}
This together with \eqref{eq:cond_epsilon0} shows that $\tau=T$.
\end{proof}

\subsection{Proof of \Cref{thm:mainLDP} for subcritical times}

Using these tools, we are ready to give a proof of the LDP for subcritical times $t\ll \varepsilon^{-1}$, which is essentially an extension of the LDP for the linear flow.

Let us fix a time $t=c_1 \varepsilon^{-1+\gamma}\ll \varepsilon^{-1}$ for some $\gamma\in(0,1]$ and $z_0>0$. We want to study the limit
\begin{equation}\label{eq:subcriticalLDP}
\lim_{\varepsilon\rightarrow 0^{+}} \varepsilon \, \log \P \left( \sup_{x\in\T} |u(t,x)| > z_0\, \varepsilon^{-1/2} \right).
\end{equation}

Let $\mathcal{D}_{\varepsilon}$ denote the event $\left\{\sup_{x\in\T} |u(t,x)| > z_0\, \varepsilon^{-1/2}\right\}$. Fix $\delta$ satisfying \eqref{eq:cond_delta}, and let us consider the set $\mathcal{A}_{\varepsilon}$ of initial data $u_0$ such that $\norm{u_0}_{\Fc L^{0,1}}> \varepsilon^{-1/2-\delta}$. Note that $t\gg T_{\varepsilon}$ in \eqref{eq:LWP_time} in this case, but $\norm{u(t)}_{L^{\infty}_x}$ is still a.s. finite thanks to \Cref{rk:GWP}.

First we derive an upper bound. By the triangle inequality and the embedding $\Fc L^{0,1}\subset L^{\infty}$ we have that $\mathcal{D}_{\varepsilon}\subset \mathcal{D}_{\varepsilon}^+$ where 
\[
\mathcal{D}^+_{\varepsilon}:=\left\lbrace \norm{e^{-it\Delta} u_0}_{L^{\infty}_x} + \norm{u(t)-e^{-it\Delta} u_0}_{\Fc L^{0,1}} > z_0\,\varepsilon^{-1/2} \right\rbrace .
\]
Finally, let us define the event 
\[
\mathcal{B}_{\varepsilon} := \left\lbrace \norm{u(t)-e^{-it\Delta} u_0}_{\Fc L^{0,1}}\leq z_0\, \varepsilon^{-1/2+\delta}\right\rbrace.
\]
Then we have that 
\begin{equation}\label{eq:subcritical_upperb}
\P (\mathcal{D}_{\varepsilon}) \leq \P (\mathcal{D}^+_{\varepsilon} \cap \mathcal{B}_{\varepsilon}) + \P (\mathcal{D}^+_{\varepsilon} \cap \mathcal{B}_{\varepsilon}^c)
\end{equation}

Let us study the first term in \eqref{eq:subcritical_upperb}. We have that
\[
\P (\mathcal{D}^+_{\varepsilon} \cap \mathcal{B}_{\varepsilon}) \leq \P \left( \norm{e^{-it\Delta} u_0}_{L^{\infty}_x} > z_0 (\varepsilon^{-1/2}-\varepsilon^{-1/2+\delta})\right).
\]
By \Cref{prop:linearLDP}, 
\[
\lim_{\varepsilon\rightarrow 0^{+}} \varepsilon \, \log \P \left( \norm{e^{-it\Delta} u_0}_{L^{\infty}_x} > z_0 (\varepsilon^{-1/2}-\varepsilon^{-1/2+\delta})\right) = -\frac{z_0^2}{\sum_{k\in\Z}c_k^2} .
\]

Finally, we consider the second term in \eqref{eq:subcritical_upperb}. By \Cref{thm:small_error}, if $\varepsilon<\varepsilon_0$ is small enough we can arrange:
\[
\P (\mathcal{D}^+_{\varepsilon} \cap \mathcal{B}_{\varepsilon}^c) \leq \P ( \mathcal{B}_{\varepsilon}^c) \leq \P \left( \norm{u_0}_{\Fc L^{0,1}} \geq  \varepsilon^{-1/2-\delta}\right) = \P (\mathcal{A}_{\varepsilon}).
\]
Note that we can write $\norm{u_0}_{\Fc L^{0,1}}=\sum_{k\in\Z} c_k R_k$ as we did in \eqref{eq:probMeasuresGE}. By \Cref{thm:GE} and \eqref{eq:GE_LDP_upper}, we have that $\log \P (\mathcal{A}_{\varepsilon})\sim -\varepsilon^{-1-2\delta}$, and therefore the second term in the right-hand side of \eqref{eq:subcritical_upperb} is much smaller than the first one for $\varepsilon$ small enough. This concludes the proof of the upper bound.

Next let us prove a lower bound. First of all note that $\mathcal{D}^-_{\varepsilon}\subset \mathcal{D}_{\varepsilon}$ if we define
\[
\mathcal{D}^-_{\varepsilon}:=\left\lbrace  \norm{e^{-it\Delta} u_0}_{L^{\infty}_x}-\norm{u(t)-e^{-it\Delta} u_0}_{\Fc L^{0,1}} > z_0\,\varepsilon^{-1/2} \right\rbrace .
\]

Therefore, we have that 
\begin{equation}\label{eq:subcritical_lowerb}
\begin{split}
\P (\mathcal{D}_{\varepsilon}) & \geq \P (\mathcal{D}^-_{\varepsilon} \cap \mathcal{B}_{\varepsilon})\geq \P \left( \left\lbrace \norm{e^{-it\Delta} u_0}_{L^{\infty}_x}> z_0 (\varepsilon^{-1/2} + \varepsilon^{-1/2+\delta})\right\rbrace \cap \mathcal{B}_{\varepsilon}\right) \\
& \geq \P \left( \norm{e^{-it\Delta} u_0}_{L^{\infty}_x}> z_0 (\varepsilon^{-1/2} + \varepsilon^{-1/2+\delta})\right) - \P ( \mathcal{B}_{\varepsilon}^c)
\end{split}
\end{equation}

By \Cref{prop:linearLDP}, we have that 
\[
\lim_{\varepsilon\rightarrow 0^{+}} \varepsilon \, \log \P \left( \norm{e^{-it\Delta} u_0}_{L^{\infty}_x} > z_0 (\varepsilon^{-1/2}+\varepsilon^{-1/2+\delta})\right) = -\frac{z_0^2}{\sum_{k\in\Z}c_k^2} .
\]
This describes the asymptotic behavior of the first summand in \eqref{eq:subcritical_lowerb}.
By \Cref{thm:small_error}, if $\varepsilon<\varepsilon_0$ the second summand satisfies
\[
\P ( \mathcal{B}_{\varepsilon}^c)\leq \P \left( \norm{u_0}_{\Fc L^{0,1}} \geq  \varepsilon^{-1/2-\delta}\right)=\P (\mathcal{A}_{\varepsilon})
\]
which is a lower order term by the same argument as in the upper bound. This concludes the proof of the theorem.

\section{LPD for NLS: critical times}
\label{sec:critical}

\subsection{The resonant approximation}\label{subsec:resonant}

In order to prove \Cref{thm:mainLDP} for times of order $\O (\varepsilon^{-1})$, the linear approximation is not enough and we must introduce an approximation that better captures the dynamics of the nonlinear equation over such times. This can be achieved by the \emph{resonant approximation}. 

We start by writing the solution to \eqref{eq:cubicNLS} in Fourier series:
\begin{equation}
u(t,x)= \sum_{k\in\Z} u_k(t) \, e^{ikx} \ .
\end{equation}
Then we derive equations for the Fourier coefficients:
\begin{equation}
i \pa_t u_k -|k|^2 u_k = \varepsilon^2 \, \sum_{k=k_1-k_2+k_3} u_{k_1} \overline{u_{k_2}} u_{k_3}.
\end{equation}
As in the linear case, we let $v(t,x):= e^{it\Delta} u(t,x)$ so that 
\begin{equation}\label{eq:pre_resonant}
i \pa_t v_k = \varepsilon^2 \, \sum_{k=k_1-k_2+k_3} v_{k_1} \overline{v_{k_2}} v_{k_3} e^{-it\Omega}
\end{equation}
for $\Omega$ as in \eqref{eq:def_omega}. We would like to approximate this system by the resonant system, which consists of removing any terms on the right-hand side of \eqref{eq:pre_resonant} for which $\Omega\neq0$.

By imposing $k=k_1-k_2+k_3$ and $\Omega=0$ one easily sees that we must have 
\[ k_1=k\ \mbox{and}\ k_2=k_3,\quad \mbox{or}\quad k_3=k\ \mbox{and}\ k_1=k_2. \]
We can remove most of those terms by setting $w(t,x)=e^{-2i t \varepsilon^2 \, M} v(t,x)$ where 
\begin{equation}
M=\sum_{k\in\Z} |v_k (t)|^2 =\sum_{k\in\Z} |u_k (t)|^2
\end{equation}
which is conserved. The remaining system is 
\begin{equation}\label{eq:pre_resonant_Fourier}
i \pa_t w_k = -\varepsilon^2\, |w_k|^2 w_k+ \varepsilon^2 \, \sum_{k=k_1-k_2+k_3,\Omega\neq 0} w_{k_1} \overline{w_{k_2}} w_{k_3} e^{-it\Omega}.
\end{equation}
The resonant approximation is given by $a(t,x) =\sum_{k\in\Z} a_k(t) \, e^{ikx}$ such that 
\begin{equation}\label{eq:resonant_Fourier}
\begin{cases}
i \pa_t a_k = -\varepsilon^2\, |a_k|^2 a_k,\\
a_k (0)= c_k \eta_k .
\end{cases}
\end{equation}
It turns out that one can explicitly solve this system after noting that $\pa_t |a_k(t)|^2 =0$. As a consequence $|a_k(t)|^2=c_k^2 |\eta_k|^2$ for all times $t$ and the system becomes linear. Direct integration then yields:
\begin{equation}\label{eq:def_a}
a(t,x)=\sum_{k\in\Z} c_k \eta_k \, e^{ikx +i t \varepsilon^2 \, c_k^2 |\eta_k|^2}.
\end{equation}
As a final step, one can undo the transformations we previously did and obtain
\begin{equation}\label{eq:resonant_app}
u_{\mathrm{app}} (t,x) = e^{2it \varepsilon^2 \, M}\,  \sum_{k\in\Z} c_k \eta_k \, e^{ikx +i \varepsilon^2 t\, c_k^2 |\eta_k|^2- it k^2}.
\end{equation}
Thanks to the rapid decay of the $c_k$, it is easy to see that $u_{\mathrm{app}}$ is well-defined globally in $t$ almost surely.

Our first result guarantees that the $u_{\mathrm{app}}$ enjoys the same LDP as the linear flow:

\begin{prop}\label{thm:resonantLDP} Fix $z_0>0$. For $t>0$ (which may depend on $\varepsilon$), we have that
\begin{equation}\label{eq:resonantLDP}
\lim_{\varepsilon \rightarrow0^{+}} \varepsilon\, \log \P \left( \sup_{x\in\T} |u_{\mathrm{app}} (t,x)| > z_0\,\varepsilon^{-1/2}\right) = - \frac{z_0^2}{\sum_{k\in\Z} c_k^2}.
\end{equation}
\end{prop}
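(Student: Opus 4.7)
The plan is to reduce \Cref{thm:resonantLDP} to the LDP for the linear flow already established in \Cref{prop:linearLDP}, by observing that the nonlinear phase corrections in \eqref{eq:resonant_app} do not alter the relevant probabilistic quantities.

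The first observation is that the scalar factor $e^{2it\varepsilon^2 M}$ in \eqref{eq:resonant_app} has modulus one, so
\[
|u_{\mathrm{app}}(t,x)| \;=\; \Bigl|\sum_{k\in\Z} c_k R_k \, e^{i \psi_k(t,x)}\Bigr|,
\qquad \psi_k(t,x) := \varphi_k + kx + \varepsilon^2 t\, c_k^2 R_k^2 - tk^2,
\]
where we have written $\eta_k = R_k e^{i\varphi_k}$ with $R_k \sim \mathrm{Rayleigh}(1/\sqrt{2})$ and $\varphi_k \sim U[0,2\pi]$, all mutually independent, as in \Cref{sec:linearUpperBound}.

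For the upper bound, the triangle inequality yields $\sup_{x\in\T} |u_{\mathrm{app}}(t,x)| \leq \sum_{k\in\Z} c_k R_k$, exactly as in \eqref{eq:overest}. Hence
\[
\P\!\left(\sup_{x\in\T}|u_{\mathrm{app}}(t,x)|>z_0\varepsilon^{-1/2}\right) \leq \P\!\left(\sum_{k\in\Z} c_k R_k > z_0\varepsilon^{-1/2}\right),
\]
and the analysis of \Cref{sec:linearUpperBound} via \Cref{thm:GE} with the cumulant-generating function \eqref{eq:GE-MGF-limit} gives $\limsup_{\varepsilon\to 0^+} \varepsilon\log \P \leq -z_0^2/\sum_k c_k^2$. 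Crucially, this bound depends on $t$ only through the modulus structure, which is unaffected by the resonant phase corrections, so it goes through verbatim regardless of how $t$ depends on $\varepsilon$.

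For the lower bound, the key observation is a distributional invariance. Fix $t$ and evaluate at $x=0$: define
\[
\tilde{\varphi}_k := \varphi_k + \varepsilon^2 t\, c_k^2 R_k^2 - tk^2 \pmod{2\pi}.
\]
Since $\varphi_k$ is uniform on $[0,2\pi]$ and independent of $R_k$, the shifted angle $\tilde{\varphi}_k$ is also uniform on $[0,2\pi]$ and independent of $R_k$, and the pairs $(R_k,\tilde\varphi_k)$ remain mutually independent across $k$. Consequently $\tilde{\eta}_k := R_k e^{i\tilde{\varphi}_k}$ is a family of i.i.d.\ standard complex Gaussians, and
\[
|u_{\mathrm{app}}(t,0)| \;=\; \Bigl|\sum_{k\in\Z} c_k \tilde{\eta}_k\Bigr| \;\stackrel{d}{=}\; \Bigl|\sum_{k\in\Z} c_k \eta_k\Bigr|,
\]
which is a Rayleigh random variable with parameter $\sigma^2=\tfrac12 \sum_k c_k^2$. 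Using the density \eqref{eq:densityRayleigh} exactly as in \Cref{sec:linearLowerBound} gives
\[
\P\!\left(\sup_{x\in\T} |u_{\mathrm{app}}(t,x)|>z_0\varepsilon^{-1/2}\right) \geq \P\!\left(|u_{\mathrm{app}}(t,0)|>z_0\varepsilon^{-1/2}\right) = \exp\!\left(-\frac{z_0^2\, \varepsilon^{-1}}{\sum_{k\in\Z} c_k^2}\right),
\]
which yields the matching lower bound $\liminf_{\varepsilon\to 0^+} \varepsilon \log \P \geq -z_0^2/\sum_k c_k^2$, concluding the proof. There is essentially no serious obstacle here; the whole content of the statement is that the randomness of the nonlinear phase shift in \eqref{eq:resonant_app} is absorbed by the rotational invariance of the $\varphi_k$'s, so the Gaussian scaling carries over from the linear setting with no loss.
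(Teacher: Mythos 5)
Your proposal is correct and follows essentially the same route as the paper: the upper bound is the same triangle-inequality reduction to $\sum_k c_k R_k$ and the G\"artner--Ellis argument of \Cref{sec:linearUpperBound}, and your observation that $\tilde\varphi_k$ remains uniform and independent of $R_k$ is precisely the content of the paper's \Cref{thm:GaussianInvariance} (proved there by conditioning and characteristic functions), after which both arguments conclude via the Rayleigh computation of \Cref{sec:linearLowerBound}. The only difference is presentational: the paper isolates the Gaussian-invariance observation as a standalone lemma, while you inline it.
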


Before presenting the proof of \Cref{thm:resonantLDP}, we need to establish a lemma about complex Gaussian random variables. In the proof of \Cref{prop:linearLDP}, we used that the solution to the linear equation is given by an infinite sum of complex Gaussian random variables to establish the LDP. For the resonant approximation, it is not clear a priori that this still holds, since in \eqref{eq:resonant_app} the complex exponential is no longer deterministic. \Cref{thm:GaussianInvariance} shows that this transformation actually preserves the distribution of $\eta_k$.

\begin{lem}\label{thm:GaussianInvariance} Let $\eta$ be a standard complex Gaussian and $a \in \R$. Then, $\eta \, e^{i a \,|\eta|^2}$ has again a standard complex Gaussian distribution.
\end{lem}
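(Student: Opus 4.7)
The plan is to exploit the polar decomposition of a standard complex Gaussian, which was already recalled in \Cref{sec:linearUpperBound}. Namely, we can write $\eta = R e^{i\varphi}$, where $R$ is Rayleigh distributed with parameter $1/\sqrt{2}$ and $\varphi$ is uniformly distributed on $[0,2\pi)$, with $R$ and $\varphi$ independent of each other. Substituting into the statement gives
\[
\eta\, e^{i a |\eta|^2} = R\, e^{i(\varphi + a R^2)},
\]
so the modulus is unchanged and the new phase is $\widetilde{\varphi} := \varphi + a R^2 \pmod{2\pi}$.

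The key step is to show that $\widetilde{\varphi}$ is again uniform on $[0,2\pi)$ and independent of $R$. I would do this by conditioning on $R$: for every fixed value $r \geq 0$, the conditional distribution of $\varphi$ given $R=r$ is still uniform on $[0,2\pi)$ by independence, and adding the deterministic shift $a r^2$ modulo $2\pi$ preserves uniformity on $[0,2\pi)$ (this is the invariance of Haar measure on $\S^1$ under rotation). Hence the conditional law of $\widetilde{\varphi}$ given $R=r$ does not depend on $r$, which simultaneously gives independence of $\widetilde{\varphi}$ from $R$ and uniformity of $\widetilde{\varphi}$ on $[0,2\pi)$.

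Since $(R,\widetilde{\varphi})$ has the same joint distribution as $(R,\varphi)$, we conclude that $R e^{i\widetilde{\varphi}}$ has the same distribution as $R e^{i\varphi} = \eta$, proving that $\eta\, e^{i a |\eta|^2}$ is again a standard complex Gaussian.

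There is no real obstacle here: the argument is essentially the rotational invariance of the Haar measure on $\S^1$ applied pathwise in $R$. The only small care needed is to justify writing the density as a product and applying Fubini (or equivalently the disintegration of the joint law into the marginal of $R$ and the conditional of $\varphi$), which is immediate since $R$ and $\varphi$ are independent with well-defined densities.
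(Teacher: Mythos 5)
Your proof is correct and takes essentially the same approach as the paper: both exploit the polar decomposition $\eta = R e^{i\varphi}$, condition on $R$, and use the rotation invariance of the uniform distribution on $[0,2\pi)$. The only difference is presentational --- the paper verifies uniformity of $\widetilde{\varphi}$ and its independence from $R$ in two separate characteristic-function computations, whereas you deduce both at once by observing that the conditional law of $\widetilde{\varphi}$ given $R=r$ is uniform for every $r$; this is slightly more economical but not a genuinely different route.
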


\begin{proof}
Let us recall that, since $\eta$ is a standard complex Gaussian, we can write $\eta$ as $R e^{i \varphi}$, where $R \sim Rayleigh(1/\sqrt{2})$ and $\varphi \sim U[0,2\pi]$, with $R$ and $\varphi$ independent of each other (see \cite{anderson1958introduction}, Section 2.7). Using this characterization, we have that 
\[
\eta \, e^{i a \,|\eta|^2} = R\, e^{i \varphi + i a \,R^2} = R\, e^{i \widetilde{\varphi}(R)},
\]
where $\widetilde{\varphi}(R):= \varphi + a \,R^2 \left(\text{mod } 2\pi\right)$. Hence, it suffices to show that $\widetilde{\varphi}(R) \sim U[0,2\pi]$, and that $\widetilde{\varphi}(R)$ and $R$ are independent to conclude the desired result.

To show the first statement, let us compute the characteristic function of $\widetilde{\varphi}(R)$: for $\xi \in \R$, we have
\[
\ee{e^{i \xi \widetilde{\varphi}(R)}} = \ee{\ee{e^{i \xi \widetilde{\varphi}(R)}\,\big|\, R}} = \ee{h(R)},
\]
where $h(r) = \ee{e^{i \xi \widetilde{\varphi}(r)}}$, since $R$ and $\varphi$ are independent. Now, using the translation invariance of the uniform distribution, it is easy to see that $\widetilde{\varphi}(r) \sim U[0,2\pi]$, for any $r \in \R$, and hence, $h(r)$ is actually constant and equal to $\ee{e^{i \xi \varphi}}$. This gives us that $\ee{e^{i \xi \widetilde{\varphi}(R)}} = \ee{e^{i \xi \varphi}}$, which yields the desired result, since the characteristic function determines the distribution.

To show that $\widetilde{\varphi}(R)$ and $R$ are independent, we will also use the characteristic function method: for $\xi_1,\xi_2 \in \R$, we have
\begin{align*}
\ee{e^{i \xi_1 \widetilde{\varphi}(R) + i \xi_2 R}} & = \ee{\ee{e^{i \xi_1 \widetilde{\varphi}(R) + i \xi_2 R}\,\big|\, R}} = \ee{\ee{e^{i \xi_1 \widetilde{\varphi}(R)}\,\big|\, R}e^{i \xi_2 R}} \\
& = \ee{\ee{e^{i \xi_1 \varphi}} e^{i \xi_2 R}} = \ee{e^{i \xi_1 \varphi}}\ee{e^{i \xi_2 R}},
\end{align*}
where in the second equality we have used the ``take out what is known'' property of conditional expectation. This implies that $\widetilde{\varphi}(R)$ and $R$ are independent, since its joint characteristic function factorizes. This completes the proof of \Cref{thm:GaussianInvariance}.
\end{proof}

\begin{proof}[Proof of \Cref{thm:resonantLDP}]
As in the proof of \Cref{prop:linearLDP}, we will compute the limit on the left-hand side of  \eqref{eq:resonantLDP} by deriving upper and lower bounds, and checking that they are equal.

Let us start with the upper bound. As in the proof of \Cref{thm:GaussianInvariance}, we can rewrite $\eta_k$ as $R_k e^{i \varphi_k}$. Using the same strategy as in \eqref{eq:overest}, we have
\begin{align*}
 \sup_{x \in \T} |u_{\mathrm{app}}(t,x)|^2 = \sum_{k \in \Z} (c_k R_k)^2 + \sup_{x \in \T} \sum_{j\neq k}^{\ast} c_j c_k R_j R_k \cos(\psi_j -\psi_k) \\
 \leq  \sum_{k \in \Z} (c_k R_k)^2 + \sum_{j\neq k}^{\ast} c_j c_k R_j R_k = \left(\sum_{k \in \Z} c_k R_k\right)^2,
 \end{align*}
where $\sum^{\ast}$ indicates a sum in all $j, k \in \Z$, and $\psi_k (t,x)= \varphi_k + k x + \varepsilon^2 t\, c_k^2 R_k^2 - k^2 t$.  Using this bound, we can estimate from above the probability in \eqref{eq:resonantLDP} to get
\begin{equation}\label{eq:probUpperBound_resonant}
 \P \left( \sup_{x \in \T} |u_{\mathrm{app}}(t,x)|\geq z_0\varepsilon^{-1/2}\right) \leq \P \left( \sum_{k \in \Z} c_k R_k \geq z_0\varepsilon^{-1/2}\right).
\end{equation}
Notice that the right-hand side of \eqref{eq:probUpperBound_resonant} is the same as the one in \eqref{eq:probUpperBound}. Hence, as we did in \Cref{sec:linearUpperBound}, we can apply the G\"artner-Ellis theorem to conclude that
\begin{equation}\label{eq:probUpperBound_limit_resonant}
\begin{aligned}
 & \limsup_{\varepsilon \rightarrow 0^+} \varepsilon \log \P \left( \sup_{x \in \T} |u_{\mathrm{app}}(t,x)|\geq z_0\varepsilon^{-1/2}\right) \\
 & \qquad \leq \limsup_{\varepsilon \rightarrow 0^+} \varepsilon \log \P \left( \sum_{k \in \Z} c_k R_k \geq z_0\varepsilon^{-1/2}\right) \le - \frac{z_0^2}{\sum_{j\in\Z} c_j^2}.
 \end{aligned}
\end{equation}

For the lower bound, we start by noting that, thanks to \eqref{eq:resonant_app} and \Cref{thm:GaussianInvariance}, for each $(t,x) \in \R \times \T$ fixed, $u_{\mathrm{app}}(t,x)$ has the same distribution as the solution to the linear equation in \eqref{eq:LSn}, $e^{-it\Delta} u_0(x)$. That is, $u_{\mathrm{app}}(t,x)$ is equal to the sum of independent complex Gaussian random variables $u^k_{\mathrm{app}}(t,x)$, for $k \in \Z$, with 
\[
\ee{u^k_{\mathrm{app}}(t,x)} = \ee{u^k_{\mathrm{app}}(t,x)u^k_{\mathrm{app}}(t,x)} = 0 \quad \text{and} \quad \ee{u^k_{\mathrm{app}}(t,x)\overline{u^k_{\mathrm{app}}(t,x)}} = c^2_k,
\]
and hence, $u_{\mathrm{app}}(t,x)$ has a complex normal distribution with
\[
\ee{u_{\mathrm{app}}(t,x)} = \ee{u_{\mathrm{app}}(t,x)u_{\mathrm{app}}(t,x)} = 0 \quad \mbox{and}\quad \ee{u_{\mathrm{app}}(t,x)\overline{u_{\mathrm{app}}(t,x)}} = \sum_{k \in \Z}c^2_k.
\] 
Then, using the same arguments as in \Cref{sec:linearLowerBound}, we obtain
\begin{equation}\label{eq:probLowerBound_limit_resonant}
\begin{aligned}
& \liminf_{\varepsilon \rightarrow 0^+} \varepsilon \log \pp{\sup_{x \in \T} |u_{\mathrm{app}}(t,x)| \geq z_0 \varepsilon^{-1/2}} \\
& \qquad \ge \liminf_{\varepsilon \rightarrow 0^+} \varepsilon \log\pp{|u_{\mathrm{app}}(t,0)| \geq z_0 \varepsilon^{-1/2}}
 =  -\frac{z_0^2}{\sum_{k \in \Z} c_k^2}.
 \end{aligned}
\end{equation}

Finally, combining \eqref{eq:probUpperBound_limit_resonant} and \eqref{eq:probLowerBound_limit_resonant}, the expression for the limit in \eqref{eq:resonantLDP} holds, which concludes the proof of \Cref{thm:resonantLDP}.
\end{proof}

\begin{rk}
Note that \eqref{eq:resonantLDP} holds even if $t=\O (\varepsilon^{-1})$, which is the timescale we are interested in: for the upper bound, \eqref{eq:probUpperBound_resonant} provides a uniform bound on time and space; and for the lower bound, $u_{\mathrm{app}}(t,x)$ is at stationarity and hence, it has the same distribution for all $(t,x) \in \R \times \T$.
\end{rk}

\subsection{Bounds on the error}

Our next goal is to obtain good estimates on the error when we approximate the solution $u$ to \eqref{eq:cubicNLS} by the resonant approximation \eqref{eq:resonant_app}. To do so, we will work in the Fourier-Lebesgue space $\Fc L^{2,1}$ defined in \eqref{eq:def_FL}, which is given by the norm:
\begin{equation}
\norm{f}_{\Fc L^{2,1}} = \sum_{k\in\Z} \langle k \rangle^2 |f_k|.
\end{equation}

Our first result is an analogue to \Cref{thm:LWP1} in this new space:

\begin{prop}\label{thm:LWP2}
If $u_0\in \Fc L^{2,1}$, then there exists $T_{\varepsilon}>0$ such that the unique solution to the IVP \eqref{eq:cubicNLS} lives in $\Fc L^{2,1}$ for all times $0\leq t\leq T_{\varepsilon}$. Moreover
\begin{equation}\label{eq:LWP_time2}
T_{\varepsilon} \sim \varepsilon^{-2} \, \norm{u_0}_{\Fc L^{2,1}}^{-2}.
\end{equation}
\end{prop}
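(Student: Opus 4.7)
The plan is to mirror the proof of \Cref{thm:LWP1} essentially verbatim in the stronger space $\Fc L^{2,1}$. The only new ingredient is the verification that $\Fc L^{2,1}$ is a Banach algebra under pointwise multiplication, which then closes the trilinear estimate needed to run the contraction mapping argument. Specifically, for $k=k_1+k_2$ one has the elementary bound $\langle k\rangle^2 \lesssim \langle k_1\rangle^2 + \langle k_2\rangle^2$, which inserted into the convolution gives
\[
\norm{fg}_{\Fc L^{2,1}} \lesssim \norm{f}_{\Fc L^{2,1}}\norm{g}_{\Fc L^{0,1}} + \norm{f}_{\Fc L^{0,1}}\norm{g}_{\Fc L^{2,1}} \lesssim \norm{f}_{\Fc L^{2,1}}\norm{g}_{\Fc L^{2,1}},
\]
using the continuous embedding $\Fc L^{2,1}\hookrightarrow \Fc L^{0,1}$. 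Iterating this bound gives $\norm{fgh}_{\Fc L^{2,1}} \lesssim \norm{f}_{\Fc L^{2,1}}\norm{g}_{\Fc L^{2,1}}\norm{h}_{\Fc L^{2,1}}$.

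With this in hand, I would set $v(t,x):=e^{it\Delta}u(t,x)$ as before, so that $v$ satisfies the integral formulation \eqref{eq:integral_eq}. Applying the weighted $\ell^1$ norm to that identity and using the trilinear estimate above yields the analogue of \eqref{eq:fixed_point},
\[
\sup_{0\leq t\leq T}\norm{v(t)}_{\Fc L^{2,1}} \leq \norm{v(0)}_{\Fc L^{2,1}} + C\,\varepsilon^{2}\,T\,\sup_{0\leq t\leq T}\norm{v(t)}_{\Fc L^{2,1}}^{3},
\]
together with the matching difference estimate
\[
\sup_{0\leq t\leq T}\norm{v(t)-w(t)}_{\Fc L^{2,1}} \leq C\,\varepsilon^{2}\,T\,\sup_{0\leq t\leq T}\norm{v(t)-w(t)}_{\Fc L^{2,1}}\left(\norm{v}_{L^{\infty}_{t}\Fc L^{2,1}}^{2}+\norm{w}_{L^{\infty}_{t}\Fc L^{2,1}}^{2}\right).
\]
A standard contraction on the closed ball of radius $2\norm{v(0)}_{\Fc L^{2,1}}$ in $C([0,T];\Fc L^{2,1})$ then provides a unique solution for any $T$ satisfying $C\,\varepsilon^{2}\,T\,\norm{v(0)}_{\Fc L^{2,1}}^{2}\ll 1$, which is exactly the size \eqref{eq:LWP_time2} once one undoes the substitution $v=e^{it\Delta}u$. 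The latter step is free because $e^{it\Delta}$ acts on Fourier coefficients by multiplication by unit-modulus phases, so $\norm{e^{it\Delta}u}_{\Fc L^{2,1}}=\norm{u}_{\Fc L^{2,1}}$.

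I do not foresee any genuine obstacle here: the argument is structurally identical to that of \Cref{thm:LWP1}, with the algebra property of $\Fc L^{2,1}$ replacing the trivial $\ell^{1}\ast\ell^{1}\subset\ell^{1}$ property used previously. The only mild bookkeeping point is keeping track of the weight when distributing $\langle k\rangle^{2}$ across three factors in the resonant sum, but this is handled uniformly by the algebra estimate. As in \Cref{rk:GWP}, the fact that the random initial data almost surely lies in $\bigcap_{s\geq 0}H^{s}(\T)\subset \Fc L^{2,1}$ ensures that the hypothesis of the proposition is satisfied almost surely, so the local solution produced here will be the relevant one when we later compare $u$ with the resonant approximation \eqref{eq:resonant_app}.
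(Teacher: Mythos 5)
Your proposal is correct and matches the paper's proof in all essential respects: both set $v=e^{it\Delta}u$, apply the weighted $\ell^1$ norm to the Duhamel/integral formulation, close the trilinear estimate by distributing the $\langle k\rangle^2$ weight across the convolution, and run the same contraction argument as in \Cref{thm:LWP1}. The only cosmetic difference is that you derive the trilinear bound by iterating the algebra inequality $\langle k\rangle^2\lesssim\langle k_1\rangle^2+\langle k_2\rangle^2$ plus the embedding $\Fc L^{2,1}\hookrightarrow\Fc L^{0,1}$, whereas the paper invokes the cruder one-step bound $\langle k\rangle^2\lesssim\prod_{j=1}^3\langle k_j\rangle^2$ for $k=k_1-k_2+k_3$; these lead to the same estimate.
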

\begin{proof}
Using \eqref{eq:pre_resonant}, we have that 
\begin{equation}
v_k (t) = v_k(0) -i \varepsilon^2  \, \int_{0}^{t} \sum_{k=k_1-k_2+k_3} (v_{k_1} \overline{v_{k_2}} v_{k_3})(s)\, e^{-is\Omega} \, ds
\end{equation}
and therefore 
\begin{equation}
\begin{split}
\norm{v(t)}_{\Fc L^{2,1}} \leq & \norm{v(0)}_{\Fc L^{2,1}} + \varepsilon^{2} \, \sum_{k\in\Z} \langle k\rangle^2 \int_0^{t} \Big | \sum_{k=k_1-k_2+k_3} (v_{k_1} \overline{v_{k_2}} v_{k_3})(s)\Big | \, ds \\
\leq & \norm{v(0)}_{\Fc L^{2,1}} + C\, \varepsilon^{2} \,  \sum_{k_1,k_2,k_3} \prod_{j=1}^{3} \langle k_j\rangle^2 \int_0^{t} \Big |(v_{k_1} \overline{v_{k_2}} v_{k_3})(s)\Big | \, ds \\
\leq & \norm{v(0)}_{\Fc L^{2,1}} + C\,\varepsilon^2 \,   \int_0^t\norm{v(s)}_{\Fc L^{2,1}}^3\, ds ,
\end{split}
\end{equation}
after using the fact that $\langle k\rangle^2 \lesssim \prod_{j=1}^{3} \langle k_j\rangle^2$ for $k=k_1-k_2+k_3$. The rest of the proof is similar to that of \Cref{thm:LWP1}.
\end{proof}

\begin{rk}\label{rk:GWP2}
Following \Cref{rk:GWP}, the solution in \Cref{thm:LWP2} is global. Note that $H^3(\T) \subset \Fc L^{2,1}$.
\end{rk}

As a consequence, we have the following perturbation result:

\begin{cor}\label{thm:bootstrap2} Suppose that $u(0),u_{\mathrm{app}}(0)\in \Fc L^{2,1}$ and that $t\leq T_{\varepsilon}$ as in \eqref{eq:LWP_time2}. Then there exists some positive constant $C$ (independent of $\varepsilon$, $T_{\varepsilon}$ and the initial data) such that for all times $t\leq T_{\varepsilon}$ the following inequality holds:
\begin{align}\label{eq:bootstrap2}
\norm{u(t)-u_{\mathrm{app}}(t)}_{\Fc L^{2,1}} & \leq \norm{u(0)-u_{\mathrm{app}}(0)}_{\Fc L^{2,1}}
+ C\, \varepsilon^{4} \, t\, \left( \norm{u-u_{\mathrm{app}}}_{L^{\infty}([0,t),\Fc L^{2,1})}^5 + \norm{u_{\mathrm{app}}(0)}_{\Fc L^{2,1}}^5 \right)\nonumber\\
& + C\, \varepsilon^{2} \, t\, \left( \norm{u-u_{\mathrm{app}}}_{L^{\infty}([0,t),\Fc L^{2,1})}^2 + \norm{u_{\mathrm{app}}(0)}_{\Fc L^{2,1}}^2 \right)\, \norm{u-u_{\mathrm{app}}}_{L^{\infty}([0,t),\Fc L^{2,1})}\\
& + C\, \varepsilon^2 \,\left( \norm{u-u_{\mathrm{app}}}_{L^{\infty}([0,t),\Fc L^{2,1})}^3 + \norm{u_{\mathrm{app}}(0)}_{\Fc L^{2,1}}^3+\norm{u(0)}_{\Fc L^{2,1}}^3 \right).\nonumber
\end{align}
\end{cor}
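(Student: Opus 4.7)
The plan is to derive an integral equation for $u-u_{\mathrm{app}}$ in a convenient gauge, then control the cubic nonlinearity by splitting it into its \emph{resonant} and \emph{non-resonant} parts. The key analytic input is the multiplicative inequality $\langle k\rangle^2\lesssim \langle k_1\rangle^2\langle k_2\rangle^2\langle k_3\rangle^2$ whenever $k=k_1-k_2+k_3$, already exploited in the proof of \Cref{thm:LWP2}, which makes $\Fc L^{2,1}$ behave like an algebra for the cubic (and, after one normal-form step, quintic) interactions that will arise.

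First I would pass to the gauge $w(t,x)=e^{-2it\varepsilon^2 M}e^{it\Delta}u(t,x)$, so that $w_k$ satisfies \eqref{eq:pre_resonant_Fourier}, and the companion $a_k$ obtained from $u_{\mathrm{app}}$ solves the purely resonant ODE \eqref{eq:resonant_Fourier}. Two observations will be used throughout: the gauge map is an $\Fc L^{2,1}$-isometry, so $\norm{u-u_{\mathrm{app}}}_{\Fc L^{2,1}}=\norm{w-a}_{\Fc L^{2,1}}$; and $|a_k(t)|=|a_k(0)|$ implies $\norm{a(t)}_{\Fc L^{2,1}}=\norm{u_{\mathrm{app}}(0)}_{\Fc L^{2,1}}$ for all $t$. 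Subtracting the two equations and applying Duhamel yields
\begin{align*}
w_k(t)-a_k(t) & = w_k(0)-a_k(0)+i\varepsilon^2\int_0^t \bigl(|w_k|^2 w_k-|a_k|^2 a_k\bigr)(s)\,ds \\
& \quad -i\varepsilon^2\int_0^t \sum_{\substack{k=k_1-k_2+k_3\\ \Omega\neq 0}} (w_{k_1}\overline{w_{k_2}}w_{k_3})(s)\,e^{-is\Omega}\,ds.
\end{align*}
The resonant cubic difference $|w_k|^2 w_k-|a_k|^2 a_k$ factors as $(w_k-a_k)$ times a quadratic polynomial in $(w_k,a_k)$; multiplying by $\langle k\rangle^2$, summing in $k$, and using the algebra bound above produces a contribution controlled by $C\varepsilon^2 t\,\norm{w-a}_{L^\infty \Fc L^{2,1}}\bigl(\norm{w-a}_{L^\infty \Fc L^{2,1}}^2+\norm{u_{\mathrm{app}}(0)}_{\Fc L^{2,1}}^2\bigr)$, which is precisely the second line of \eqref{eq:bootstrap2}.

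The main obstacle is the non-resonant integral. Here I would apply the standard normal-form trick of integration by parts in time using $e^{-is\Omega}=\partial_s(e^{-is\Omega})/(-i\Omega)$. Because every $k_j$ is an integer, $\Omega\in\Z$, hence $|\Omega|\geq 1$ on the non-resonant set and the factor $1/\Omega$ is harmless. This produces two kinds of new terms: boundary contributions at $s=0$ and $s=t$, of size $\varepsilon^2$ times a cubic expression evaluated at $w(0)=u(0)$ and at $w(t)$ (the latter split as $\norm{w-a}+\norm{u_{\mathrm{app}}(0)}$), which yield the last line of \eqref{eq:bootstrap2}; and a new time integral in which the derivative $\partial_s(w_{k_1}\overline{w_{k_2}}w_{k_3})$ is replaced, through the equations for $\partial_s w_{k_j}$, by an $\varepsilon^2$-weighted quintic expression, producing the $\varepsilon^4 t$ quintic term on the first line of \eqref{eq:bootstrap2}. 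The only bookkeeping needed is the algebra inequality extended to five factors via $\langle k\rangle^2\lesssim\prod_{j=1}^{5} \langle k_j\rangle^2$ when $k=k_1-k_2+k_3-k_4+k_5$, which follows from the same Young-convolution argument.
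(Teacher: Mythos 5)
Your proposal is correct and follows essentially the same route as the paper's proof: gauge to the interaction picture $w,a$ solving \eqref{eq:pre_resonant_Fourier} and \eqref{eq:resonant_Fourier}, subtract and Duhamel, bound the resonant difference $|w_k|^2 w_k-|a_k|^2 a_k$ directly (second line), integrate the non-resonant sum by parts in time using $|\Omega|\geq 1$, and account separately for the boundary terms (last line) and the $\partial_s$ term via $\norm{\partial_s w}_{\Fc L^{2,1}}\lesssim\varepsilon^2\norm{w}_{\Fc L^{2,1}}^3$ (the $\varepsilon^4 t$ quintic). The one cosmetic discrepancy is that the resonant diagonal term is most naturally closed by $\sup_k|w_k|\leq\norm{w}_{\Fc L^{2,1}}$ rather than by the trilinear convolution bound you invoke, but this does not affect the conclusion.
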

\begin{proof}
We subtract \eqref{eq:resonant_Fourier} from \eqref{eq:pre_resonant_Fourier}, and obtain 
\begin{equation}
i\pa_t (w_k -a_k) = \varepsilon^2 \left( |a_k|^2 a_k - |w_k|^2 w_k \right) + \varepsilon^2 \, \sum_{k=k_1-k_2+k_3,\Omega\neq 0} w_{k_1} \overline{w_{k_2}} w_{k_3} e^{-i t\Omega}.
\end{equation}
Integrating this equation, we find that 
\begin{equation}
\begin{split}
i\,[w_k(t) -a_k(t)] & =i\, [w_k (0) -a_k(0)]+ \varepsilon^2 \int_0^{t} \left( |a_k|^2 a_k - |w_k|^2 w_k \right)(s)\, ds \\
& + \varepsilon^2 \, \sum_{k=k_1-k_2+k_3,\Omega\neq 0} \int_0^{t} (w_{k_1} \overline{w_{k_2}} w_{k_3})(s) e^{-is\Omega} \, ds.
\end{split}
\end{equation}
We multiply this equation by $\langle k\rangle^2$, take absolute values and sum in $k\in\Z$. Next we estimate
\[
\begin{split}
\sum_{k\in\Z} \langle k\rangle^2 \Big | \int_0^{t} ( |a_k|^2 a_k - & |w_k|^2 w_k )\, ds\Big |  \lesssim \sum_{k\in\Z} \langle k\rangle^2  \int_0^{t} \left( |a_k|^2 +  |w_k|^2 \right) |a_k - w_k|\, ds\\
& \lesssim \int_0^{t} \left( \sup_k |a_k|^2 +  \sup_k |w_k|^2 \right) \sum_{k\in\Z} \langle k\rangle^2 |a_k(s) - w_k(s)|\, ds\\
& \lesssim \int_0^{t} \left( \norm{a(s)}_{\Fc L^{2,1}}^2 +  \norm{w(s)}_{\Fc L^{2,1}}^2\right) \norm{a(s)-w(s)}_{\Fc L^{2,1}}\, ds\\
& \lesssim t\, \left( \norm{a}_{L^{\infty}([0,t),\Fc L^{2,1})}^2 +  \norm{w}_{L^{\infty}([0,t),\Fc L^{2,1})}^2 \right) \norm{a-w}_{L^{\infty}([0,t),\Fc L^{2,1})}\\
& \lesssim t\, \left( \norm{a}_{L^{\infty}([0,t),\Fc L^{2,1})}^2 +  \norm{w-a}_{L^{\infty}([0,t),\Fc L^{2,1})}^2 \right) \norm{w-a}_{L^{\infty}([0,t),\Fc L^{2,1})}.
\end{split}
\]
Finally, we use the fact that 
\begin{equation}\label{eq:conserved_a}
\norm{a(t)}_{\Fc L^{2,1}}=\norm{a(0)}_{\Fc L^{2,1}}=\norm{u_{\mathrm{app}}(0)}_{\Fc L^{2,1}}
\end{equation}
which follows by an explicit computation using \eqref{eq:def_a} and \eqref{eq:resonant_app}, and we undo the transformations that relate $a$ to $u_{\mathrm{app}}$ and $w$ to $u$ (which leave the $\Fc L^{2,1}$-norm invariant). This accounts for the third term in \eqref{eq:bootstrap2}.

In order to get the other terms, we first integrate by parts
\begin{equation}\label{eq:IBP}
\int_0^{t} w_{k_1} \overline{w_{k_2}} w_{k_3} e^{-i s\Omega} \, ds = -\frac{1}{i\Omega}\, w_{k_1} \overline{w_{k_2}} w_{k_3} e^{-i s\Omega} \Big |_{s=0}^{s=t} + \frac{1}{i\Omega}\,\int_0^{t} \pa_s \left( w_{k_1} \overline{w_{k_2}} w_{k_3}\right) e^{-i s\Omega}\, ds
\end{equation}
The boundary term can be controlled as follows:
\begin{multline*}
\varepsilon^2 \, \sum_{k\in\Z} \langle k \rangle^2 \Big | \sum_{k=k_1-k_2+k_3,\Omega\neq 0} \frac{1}{\Omega} \left[ (w_{k_1} \overline{w_{k_2}} w_{k_3})(t) e^{-i t\Omega} - (w_{k_1} \overline{w_{k_2}} w_{k_3})(0)\right] \Big | \\
\lesssim \varepsilon^2 \sum_{k_1, k_2, k_3} \prod_{j=1}^{3} \langle k_j\rangle^2 \, |w_{k_j}(t)| + \varepsilon^2 \sum_{k_1, k_2, k_3} \prod_{j=1}^{3} \langle k_j\rangle^2 \, |w_{k_j}(0)| \\
\lesssim \varepsilon^2 \left( \norm{w(t)-a(t)}_{\Fc L^{2,1}}^3 + \norm{a(0)}_{\Fc L^{2,1}}^3 + \norm{w(0)}_{\Fc L^{2,1}}^3\right).
\end{multline*}
This gives rise to the fourth term in \eqref{eq:bootstrap2}. Finally, let us consider the rightmost term in \eqref{eq:IBP}. There are three cases depending on which term $\pa_s$ hits; we do one as an example:
\begin{multline*}
\varepsilon^2 \sum_{k\in\Z} \langle k\rangle^2 \, \Big | \sum_{k=k_1-k_2+k_3,\Omega\neq 0} \frac{1}{\Omega}\int_0^{t} (\pa_s w_{k_1}) \overline{w_{k_2}} w_{k_3}\, e^{-i s\Omega}\, ds \Big | \\
 \lesssim \varepsilon^2 \,\int_0^t \sum_{k_1,k_2,k_3} \langle k_2\rangle^2 |w_{k_2}(s)| \, \langle k_3\rangle^2 |w_{k_3}(s)| \, \langle k_1\rangle^2 |\pa_s w_{k_1}(s)| \, ds\\
 \lesssim \varepsilon^2 \, t \norm{w}_{L^{\infty}([0,t),\Fc L^{2,1})}^2\, \norm{\pa_s w}_{L^{\infty}([0,t),\Fc L^{2,1})} .
\end{multline*}
Finally, one can easily use \eqref{eq:pre_resonant_Fourier} to show that 
\[ 
\norm{\pa_s w}_{L^{\infty}([0,t),\Fc L^{2,1})} \lesssim \varepsilon^2 \, \norm{w}_{L^{\infty}([0,t),\Fc L^{2,1})}^3.
\]
A final step using the triangle inequality to bound the norm of $w$ in terms of $w-a$ and $a$ (together with \eqref{eq:conserved_a}) yields the second term in \eqref{eq:bootstrap2}.
\end{proof}

As an application of the estimates above, we present a bootstrap argument that allows us to control the difference between the solution and our approximation.

\begin{prop}\label{thm:small_error2} Fix $\delta\in (0,1)$, $d_1,d_2>0$. Then there exists $\varepsilon_0$ (depending on $\delta$, $d_1$ and $d_2$) such that the following holds for all $\varepsilon\leq \varepsilon_0$. If $T= d_1 \varepsilon^{-1}$ and $\norm{u(0)}_{\Fc L^{2,1}}\leq d_2 \varepsilon^{-1/2}$, then
\begin{enumerate}[(i)]
\item we can extend the time where \eqref{eq:bootstrap2} is valid from $T_{\varepsilon}$ (in \eqref{eq:LWP_time2}) all the way to $T$; and
\item we have the estimate
\begin{equation}\label{eq:small_error2}
 \norm{u-u_{\mathrm{app}}}_{L^{\infty}([0,T],\Fc L^{2,1})} < \varepsilon^{-1/2+\delta} .
 \end{equation}
\end{enumerate}
\end{prop}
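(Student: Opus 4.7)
Since at $t=0$ both $u$ and $u_{\mathrm{app}}$ agree with the random Fourier series $\sum_k c_k\eta_k e^{ikx}$, we have $\norm{u(0)-u_{\mathrm{app}}(0)}_{\Fc L^{2,1}}=0$ and $\norm{u_{\mathrm{app}}(0)}_{\Fc L^{2,1}}=\norm{u(0)}_{\Fc L^{2,1}}\le d_2\varepsilon^{-1/2}$. By \Cref{rk:GWP2} the solution $u$ exists globally in $\Fc L^{2,1}$, while $u_{\mathrm{app}}$ is explicit and global, so $t\mapsto\norm{u(t)-u_{\mathrm{app}}(t)}_{\Fc L^{2,1}}$ is continuous on $[0,T]$. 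For (i), I would simply inspect the proof of \Cref{thm:bootstrap2}: the bound \eqref{eq:bootstrap2} follows from Duhamel's formula together with the inequality $\langle k\rangle^{2}\lesssim\prod_{j=1}^{3}\langle k_j\rangle^{2}$ under $k=k_1-k_2+k_3$, and uses only that $u,u_{\mathrm{app}}\in C([0,t],\Fc L^{2,1})$; global existence thus extends \eqref{eq:bootstrap2} from $[0,T_\varepsilon]$ to all of $[0,T]$. For (ii) I set up a standard bootstrap: define
\[
\tau:=\sup\bigl\{t\in[0,T]:\norm{u-u_{\mathrm{app}}}_{L^{\infty}([0,t],\Fc L^{2,1})}<\varepsilon^{-1/2+\delta}\bigr\}>0,
\]
assume for contradiction $\tau<T$ (so equality holds at $\tau$), and substitute the bootstrap hypothesis together with $\norm{u_{\mathrm{app}}(0)}_{\Fc L^{2,1}}\le d_2\varepsilon^{-1/2}$ and $t\le d_1\varepsilon^{-1}$ into \eqref{eq:bootstrap2}.

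A term-by-term count shows that the quintic contribution on line 2 of \eqref{eq:bootstrap2} contributes $\lesssim_{d_1,d_2}\varepsilon^{1/2}$, the line 4 contribution is also $\lesssim_{d_1,d_2}\varepsilon^{1/2}$, and the purely cubic piece of line 3 (the $\norm{u-u_{\mathrm{app}}}^2\cdot\norm{u-u_{\mathrm{app}}}$ part) is $\lesssim d_1\varepsilon^{-1/2+3\delta}$; all of these are $o(\varepsilon^{-1/2+\delta})$ as $\varepsilon\to 0^+$. The delicate contribution is the subterm of line 3 that is linear in the difference and uses $\norm{u_{\mathrm{app}}(0)}_{\Fc L^{2,1}}^{2}$, namely $C\varepsilon^{2}t\,\norm{u_{\mathrm{app}}(0)}_{\Fc L^{2,1}}^{2}\,\norm{u-u_{\mathrm{app}}}_{L^{\infty}([0,t],\Fc L^{2,1})}$, which under the bootstrap evaluates to $Cd_1d_2^{2}\,\varepsilon^{-1/2+\delta}$. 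Since $d_1d_2^{2}$ is a fixed constant that may exceed $1$, this term is of the \emph{same} order as the bootstrap threshold and cannot be absorbed by choosing $\varepsilon$ small.

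To close the argument I would revisit the step in the proof of \Cref{thm:bootstrap2} where the integral $\int_0^{t}(\norm{a(s)}^{2}+\norm{w(s)}^{2})\norm{a(s)-w(s)}\,ds$ was controlled by pulling out $L^{\infty}$-in-time norms, and instead keep it inside the time integral. Using $\norm{u_{\mathrm{app}}(s)}_{\Fc L^{2,1}}=\norm{u_{\mathrm{app}}(0)}_{\Fc L^{2,1}}$ from \eqref{eq:conserved_a} and the bootstrap assumption to control $\norm{u(s)}_{\Fc L^{2,1}}^{2}\le 2d_2^{2}\varepsilon^{-1}+2\varepsilon^{-1+2\delta}\lesssim d_2^{2}\varepsilon^{-1}$, this yields
\[
\norm{u(t)-u_{\mathrm{app}}(t)}_{\Fc L^{2,1}}\le g_{d_1,d_2}(\varepsilon)+Cd_2^{2}\,\varepsilon\int_{0}^{t}\norm{u(s)-u_{\mathrm{app}}(s)}_{\Fc L^{2,1}}\,ds,
\]
where $g_{d_1,d_2}(\varepsilon)\lesssim_{d_1,d_2}\varepsilon^{1/2}$ collects all the safe terms identified above. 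Gronwall's inequality together with $\varepsilon t\le d_1$ then gives
\[
\norm{u(t)-u_{\mathrm{app}}(t)}_{\Fc L^{2,1}}\le g_{d_1,d_2}(\varepsilon)\,e^{Cd_2^{2}\varepsilon t}\le C(d_1,d_2)\,\varepsilon^{1/2},
\]
which for $\varepsilon<\varepsilon_0(\delta,d_1,d_2)$ small enough is strictly less than $\tfrac{1}{2}\varepsilon^{-1/2+\delta}$, contradicting $\tau<T$ and forcing $\tau=T$. The main obstacle, as the preceding discussion makes clear, is precisely the term of order $\varepsilon^{-1/2+\delta}$ with coefficient $Cd_1d_2^{2}$: for arbitrary $d_1,d_2>0$ it blocks a naive bootstrap from \eqref{eq:bootstrap2}, and one must re-derive that single estimate in a Gronwall-compatible integral form before the continuity argument can be closed.
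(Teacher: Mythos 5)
Your proposal is correct but takes a genuinely different route from the paper. The paper avoids Gronwall altogether: it chooses a short time step $\lambda\varepsilon^{-1}$ with $4\lambda<(Cd_2^2)^{-1}$ so that the linear-in-difference term $C\lambda d_2^2\,\varepsilon^{-1/2+\delta_1}$ is at most $\tfrac14\varepsilon^{-1/2+\delta_1}$, runs a direct bootstrap on $[0,\lambda\varepsilon^{-1}]$ with threshold $\varepsilon^{-1/2+\delta_1}$, and then restarts on each successive window $[n\lambda\varepsilon^{-1},(n+1)\lambda\varepsilon^{-1}]$ with a strictly smaller exponent $\delta_{n+1}<\delta_n$ so that the inherited error from the previous step (condition \eqref{eq:cond_rk}) can be absorbed; a fixed finite number of windows ($\sim d_1/\lambda$) reaches $T=d_1\varepsilon^{-1}$. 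You instead recognize that the source of the dangerous linear term in the derivation of \eqref{eq:bootstrap2} is $\varepsilon^{2}\int_0^{t}(\norm{a(s)}^{2}+\norm{w(s)}^{2})\norm{a(s)-w(s)}\,ds$, keep it in integral form, and close the argument in a single stroke via Gronwall, with the price being the bounded factor $e^{Cd_1 d_2^2}$. Both approaches isolate the same obstruction — the $O(1)$ coefficient $Cd_1d_2^{2}$ multiplying an $\varepsilon^{-1/2+\delta}$ term — but your Gronwall version avoids the iteration and the decreasing sequence of $\delta_n$'s, which is cleaner and more portable.

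One small inconsistency in your bookkeeping: you correctly compute that the cubic-in-difference piece $\varepsilon^{2}t\,\norm{u-u_{\mathrm{app}}}^{3}$ is $\lesssim d_1\varepsilon^{-1/2+3\delta}$ under the bootstrap, yet later you assert $g_{d_1,d_2}(\varepsilon)\lesssim\varepsilon^{1/2}$. For $\delta<1/3$ the dominant safe term is in fact $\varepsilon^{-1/2+3\delta}$, not $\varepsilon^{1/2}$. This does not break your argument: $g_{d_1,d_2}(\varepsilon)\,e^{Cd_1d_2^{2}}\lesssim\varepsilon^{-1/2+3\delta}=\varepsilon^{2\delta}\cdot\varepsilon^{-1/2+\delta}$ is still $<\tfrac12\varepsilon^{-1/2+\delta}$ once $\varepsilon$ is small, so the bootstrap closes. (Alternatively, the cubic-in-difference term could itself be fed into the Gronwall integrand, with coefficient $\varepsilon^{1+2\delta}\ll d_2^{2}\varepsilon$, leaving $g\lesssim\varepsilon^{1/2}$ as you wrote.) Either way the proof is sound; just make the stated size of $g$ match the term-by-term count.
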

\begin{proof}
{\bf Step 1.} First of all, note that \eqref{eq:bootstrap2} is valid for any time $t$ such that $u(t)\in\Fc L^{2,1}$. The choice of $T_{\varepsilon}$ guarantees that this condition is met, but it is not necessary as we will soon show. We start by setting $u(0)=u_{\mathrm{app}}(0)$ as in \eqref{eq:resonant_Fourier}. Next define
\[ 
\tau_1 := \sup \{t\in [0,T]\mid \norm{u(t)-u_{\mathrm{app}}(t)}_{\Fc L^{2,1}}<\varepsilon^{-1/2+\delta_1}\},
\]
for some small $\delta_1\in (0,1)$. Note that if $t\leq \tau_1$ then 
\[ 
\norm{u(t)}_{\Fc L^{2,1}}  \leq \norm{u(t)-u_{\mathrm{app}}(t)}_{\Fc L^{2,1}} + \norm{u_{\mathrm{app}}(t)}_{\Fc L^{2,1}}< \varepsilon^{-1/2+\delta_1} + d_2 \varepsilon^{-1/2},
\]
and therefore $u(t)\in\Fc L^{2,1}$, which implies that \eqref{eq:bootstrap2} is valid for such times. Moreover, $u(t)$ must continue to live in $\Fc L^{2,1}$ thanks to the local well-posedness theory of \Cref{thm:LWP2}. 

If $t\leq \min \{ \tau_1, \lambda \varepsilon^{-1}\}$ for some small $\lambda>0$ to be fixed later, then \eqref{eq:bootstrap2} yields
\[ 
\begin{split}
\norm{u(t)-u_{\mathrm{app}}(t)}_{\Fc L^{2,1}}  \leq &\, C\, \lambda \, \varepsilon^{3} \, \left( \varepsilon^{-5/2+5\delta_1}  + d_2^5 \,\varepsilon^{-5/2} \right) + C\,\lambda\, \varepsilon \, \left( \varepsilon^{-1+\delta_1} + d_2^2 \varepsilon^{-1} \right)\, \varepsilon^{-1/2+\delta_1}\\ 
& + C\, \varepsilon^2 \,\left( \varepsilon^{-3/2+3\delta_1} + 2d_2^3 \varepsilon^{-3/2}\right)\\
 \leq & \, C\, d_2^2\, \lambda\, \varepsilon^{-1/2+\delta_1} + C\, \lambda \, \left( \varepsilon^{-1/2+3\delta_1}+\varepsilon^{1/2+5\delta_1}  + d_2^5 \,\varepsilon^{1/2} \right)\\
& + C\, \left( \varepsilon^{1/2+3\delta_1} + 2d_2^3 \varepsilon^{1/2}\right).
\end{split}
\]
If $\varepsilon$ is small enough, it is easy to guarantee that 
\[ C\, \lambda \, \left( \varepsilon^{-1/2+3\delta_1}+\varepsilon^{1/2+5\delta_1}  + d_2^5 \,\varepsilon^{1/2} \right)+ C\, \left( \varepsilon^{1/2+3\delta_1} + 2d_2^3 \varepsilon^{1/2}\right)\leq \frac{1}{4}\varepsilon^{-1/2+\delta_1}.\]
Note that for the term in $\varepsilon^{1/2}$ to be controlled by a multiple of $\varepsilon^{-1/2+\delta_1}$ we need to impose that $\delta_1<1$.
Making $4\lambda< (C\, d_2^2)^{-1}$ (note that it does not depend on $\varepsilon$), one can make sure that 
\[ C\, d_2^2\, \lambda\, \varepsilon^{-1/2+\delta_1}\leq \frac{1}{4} \varepsilon^{-1/2+\delta_1}.\]
Therefore we have that if $t\leq \min \{ \tau_1, \lambda \varepsilon^{-1}\}$, then
\[
\norm{u(t)-u_{\mathrm{app}}(t)}_{\Fc L^{2,1}}  \leq  \frac{1}{2}\,\varepsilon^{-1/2+\delta_1}.
\]
This finishes the bootstrap argument and shows that $\tau_1\geq \lambda \varepsilon^{-1}$. 

Therefore we know that 
\begin{equation}\label{eq:step1_bootstrap}
\norm{u(t)-u_{\mathrm{app}}(t)}_{\Fc L^{2,1}} < \varepsilon^{-1/2+\delta_1}
\end{equation}
must be true for all times $0\leq t \leq \lambda \varepsilon^{-1}$.

{\bf Step 2.} Let us fix $t_1:= \lambda\, \varepsilon^{-1}$, and restart our equation for times $t>t_1$. We want to show that we can take another step of size $\lambda\, \varepsilon^{-1}$ and therefore reach times $t_2=2\lambda\, \varepsilon^{-1}$ while the error is still not too large.

To do that, let 
\[ \tau_2 := \sup \{t\in [t_1,T]\mid \norm{u(t)-u_{\mathrm{app}}(t)}_{\Fc L^{2,1}}<\varepsilon^{-1/2+\delta_2}\}\]
for some $\delta_2\in (0,\delta_1)$. First of all, note that we have
\begin{equation}
\begin{split}
\norm{u(t_1)}_{\Fc L^{2,1}} & \leq \norm{u(t_1)-u_{\mathrm{app}}(t_1)}_{\Fc L^{2,1}} + \norm{u_{\mathrm{app}}(t_1)}_{\Fc L^{2,1}} < \varepsilon^{-1/2+\delta_1} + \norm{u_{\mathrm{app}}(0)}_{\Fc L^{2,1}}\\
& = \varepsilon^{-1/2+\delta_1}+ d_2\varepsilon^{-1/2} = (\varepsilon^{\delta_1} + d_2) \, \varepsilon^{-1/2}.
\end{split}
\end{equation}
Therefore, $u(t)$ is guaranteed to exist for some times past $t_1$ by \Cref{thm:LWP2}. Then consider $t_1\leq t\leq \min\{ \tau_2, t_1+\lambda_2 \varepsilon^{-1}\}$ for $\lambda_2>0$ small enough (to be fixed soon) and let's use a version of \eqref{eq:bootstrap2} for initial data at $t_1$ instead of zero:
\[
\begin{split}
|| u(t)- &\, u_{\mathrm{app}}(t)||_{\Fc L^{2,1}} \leq \norm{u(t_1)-u_{\mathrm{app}}(t_1)}_{\Fc L^{2,1}}
\\
& + C\, \varepsilon^{4} \, (t-t_1)\, \left( \norm{u-u_{\mathrm{app}}}_{L^{\infty}([t_1,t),\Fc L^{2,1})}^5 + \norm{u_{\mathrm{app}}(t_1)}_{\Fc L^{2,1}}^5 \right)\\
& + C\, \varepsilon^{2} \, (t-t_1)\, \left( \norm{u-u_{\mathrm{app}}}_{L^{\infty}([t_1,t),\Fc L^{2,1})}^2 + \norm{u_{\mathrm{app}}(t_1)}_{\Fc L^{2,1}}^2 \right)\, \norm{u-u_{\mathrm{app}}}_{L^{\infty}([t_1,t),\Fc L^{2,1})}\\ 
& + C\, \varepsilon^2 \,\left( \norm{u-u_{\mathrm{app}}}_{L^{\infty}([t_1,t),\Fc L^{2,1})}^3 + \norm{u_{\mathrm{app}}(t_1)}_{\Fc L^{2,1}}^3+\norm{u(t_1)}_{\Fc L^{2,1}}^3 \right).
\end{split}
\]
Since $\delta_2<\delta_1$, we can make $\varepsilon$ small enough so that 
\begin{equation}\label{eq:cond_rk}
\varepsilon^{-1/2+\delta_1} \leq \frac{1}{4}\, \varepsilon^{-1/2+\delta_2}.
\end{equation}
For $t_1\leq t\leq \min\{ \tau_2, t_1+\lambda_2 \varepsilon^{-1}\}$ we have that
\[
\begin{split}
\norm{u(t)-u_{\mathrm{app}}(t)}_{\Fc L^{2,1}}  \leq &\, \frac{1}{4}\, \varepsilon^{-1/2+\delta_2}
+ C\, \varepsilon^{3} \, \lambda_2\, \left(\varepsilon^{-5/2+5\delta_2} + d_2^5 \varepsilon^{-5/2} \right) \\
& + C\, \varepsilon \,\lambda_2\, \left( \varepsilon^{-1+2\delta_2} + d_2^2 \varepsilon^{-1} \right)\,\varepsilon^{-1/2+\delta_2}\\ 
& + C\, \varepsilon^2 \,\left(\varepsilon^{-3/2+3\delta_2} + d_2^3 \varepsilon^{-3/2} + (\varepsilon^{\delta_1} + d_2)^3 \, \varepsilon^{-3/2} \right)\\
 \leq & \, \frac{1}{4}\, \varepsilon^{-1/2+\delta_2} + C \,\lambda_2\, d_2^2\, \varepsilon^{-1/2+\delta_2}
+ C \, \lambda_2\, \left(\varepsilon^{1/2+5\delta_2} + d_2^5 \varepsilon^{1/2} +\varepsilon^{-1/2+3\delta_2} \right) \\ 
& + C\,\left( \varepsilon^{1/2+3\delta_2} + d_2^3 \varepsilon^{1/2}+ (\varepsilon^{\delta_1} + d_2)^3 \, \varepsilon^{1/2} \right).
\end{split}
\]
If $\varepsilon$ is small enough, we can guarantee that
\[ C \, \lambda_2\, \left(\varepsilon^{1/2+5\delta_2} + d_2^5 \varepsilon^{1/2} +\varepsilon^{-1/2+3\delta_2} \right) + C\,\left( \varepsilon^{1/2+3\delta_2} + d_2^3 \varepsilon^{1/2}+ (\varepsilon^{\delta_1} + d_2)^3 \, \varepsilon^{1/2} \right) < \frac{1}{4}\, \varepsilon^{-1/2+\delta_2}.\]
If $4\lambda_2 < (C\,d_2^2)^{-1}$ then,
\[ C \,\lambda_2\, d_2^2\, \varepsilon^{-1/2+\delta_2}< \frac{1}{4} \, \varepsilon^{-1/2+\delta_2}.\]
Note that we can choose $\lambda_2=\lambda$. With this choice, we have that for $t_1\leq t \leq \min \{ \tau_2 , t_1+\lambda \varepsilon^{-1}\}$ we must have that 
\[
 \norm{u(t)-u_{\mathrm{app}}(t)}_{\Fc L^{2,1}} < \frac{3}{4}\, \varepsilon^{-1/2+\delta_2}.
\]
This bootstrap argument shows that $\tau_2\geq t_1 +\lambda \varepsilon^{-1}=2\lambda \varepsilon^{-1}$. In particular, we have that
\begin{equation}\label{eq:step2_bootstrap}
 \norm{u(t)-u_{\mathrm{app}}(t)}_{\Fc L^{2,1}} < \varepsilon^{-1/2+\delta_2}
 \end{equation}
must hold for all $0\leq t\leq 2\lambda\varepsilon^{-2}$.

{\bf Step 3.} We can repeat this procedure by choosing a decreasing sequence of positive small numbers $\delta_n$ (and possibly making $\varepsilon$ smaller at each step) to guarantee that 
\begin{equation}\label{eq:step3_bootstrap}
 \norm{u(t)-u_{\mathrm{app}}(t)}_{\Fc L^{2,1}} < \varepsilon^{-1/2+\delta_n}
 \end{equation}
 for all times $0\leq t \leq n \lambda \varepsilon^{-1}$, where $4\lambda < (C\,d_2^2)^{-1}$ is independent of $\varepsilon$. We can stop this procedure after a finite number of steps once $n\lambda \varepsilon^{-1}\geq d_1 \varepsilon^{-1}=T$. This concludes the proof of the proposition.
 
 Finally note that $\delta_1$ was free for us to choose as long as $\delta_1\in (0,1)$, and then one could take any decreasing sequence (no matter how slow). This shows that the only condition on $\delta$ in \eqref{eq:small_error2} is for it to live in $(0,1)$.
\end{proof}

\begin{rk}
Note that Step 3 in the proof above could be improved by talking a number of steps $n$ that depends on $\varepsilon$. This would reduce the range of $\delta$ in \Cref{thm:small_error2} and allow us to reach times $T=\O(\varepsilon^{-1} \, |\log \varepsilon|)$. This small gain is due to condition \eqref{eq:cond_rk}, which limits the choice of $n(\varepsilon)$ as follows:
\[
1>\delta_1-\delta_n=\sum_{i=1}^{n-1} (\delta_i-\delta_{i+1}) \geq \log 4\cdot \frac{n}{|\log \varepsilon|}\ .
\]
In this paper we will focus on times of the form $\varepsilon^{-r}$, and so we will not pursue this further.
\end{rk}

\subsection{Proof of \Cref{thm:mainLDP} for critical times}

 Fix $z_0>0$ (independent of $\varepsilon$) and let $t=d_1 \varepsilon^{-1}$ for some fixed $d_1>0$. We want to study the limit
\begin{equation}\label{eq:criticalLDP}
\lim_{\varepsilon\rightarrow 0^{+}} \varepsilon \, \log \P \left( \sup_{x\in\T} |u(t,x)| > z_0\, \varepsilon^{-1/2} \right).
\end{equation}
Note that the solution $u$ exists at time $t$ and $\norm{u(t)}_{L^{\infty}_x}<\infty$ almost surely thanks to \Cref{rk:GWP2}. Let $\mathcal{D}_{\varepsilon}$ be the event $\{\sup_{x\in\T} |u(t,x)| > z_0\, \varepsilon^{-1/2}\}$. Let us prove an upper bound for $\P(\mathcal{D}_{\varepsilon})$ first.

Fix $\delta\in (0,1)$. By the triangle inequality and the embedding $\Fc L^{2,1}\subset L^{\infty}$ we have that $\mathcal{D}_{\varepsilon}\subset \mathcal{D}_{\varepsilon}^+$ where 
\[
\mathcal{D}^+_{\varepsilon}:=\left\lbrace \norm{u_{\mathrm{app}}(t)}_{L^{\infty}_x} + \norm{u(t)-u_{\mathrm{app}}(t)}_{\Fc L^{2,1}} > z_0\,\varepsilon^{-1/2} \right\rbrace .
\]
Let us also define the event
\[
\mathcal{B}_{\varepsilon} := \left\lbrace \norm{u(t)-u_{\mathrm{app}}(t)}_{\Fc L^{2,1}}\leq z_0\, \varepsilon^{-1/2+\delta}\right\rbrace.
\]
Then we have that 
\begin{equation}\label{eq:critical_upperb}
\P (\mathcal{D}_{\varepsilon}) \leq \P (\mathcal{D}^+_{\varepsilon} \cap \mathcal{B}_{\varepsilon}) + \P (\mathcal{D}^+_{\varepsilon} \cap \mathcal{B}_{\varepsilon}^c).
\end{equation}

Let us study the first term in \eqref{eq:critical_upperb}. We have that
\[
\P (\mathcal{D}^+_{\varepsilon} \cap \mathcal{B}_{\varepsilon}) \leq \P \left( \norm{u_{\mathrm{app}}(t)}_{L^{\infty}_x} > z_0 (\varepsilon^{-1/2}-\varepsilon^{-1/2+\delta})\right).
\]
By \Cref{thm:resonantLDP}, 
\begin{equation}\label{eq:critical_upperb2}
\lim_{\varepsilon\rightarrow 0^{+}} \varepsilon \, \log \P \left( \norm{u_{\mathrm{app}}(t)}_{L^{\infty}_x} > z_0 (\varepsilon^{-1/2}-\varepsilon^{-1/2+\delta})\right) = -\frac{z_0^2}{\sum_{k\in\Z}c_k^2} .
\end{equation}
Next we consider the second term in \eqref{eq:critical_upperb}. First note that $\langle k\rangle^2 c_k \leq C_b \, \sqrt{c_k}$ for some positive constant $C_b>0$ which only depends on $b$. Fix $d_2>0$ large enough so that:
\begin{equation}\label{eq:cond_d2}
d_2^2 > 2\, z_0^2 \, C_b^2\, \frac{\sum_{k\in\Z} c_k}{\sum_{k\in\Z} c_k^2}.
\end{equation}
We will justify this choice shortly.

By \Cref{thm:small_error2}, if $\varepsilon<\varepsilon_0=\varepsilon_0 (\delta,d_1,d_2)$ is small enough we can arrange:
\[
\P (\mathcal{D}^+_{\varepsilon} \cap \mathcal{B}_{\varepsilon}^c) \leq \P ( \mathcal{B}_{\varepsilon}^c) \leq \P \left( \norm{u_0}_{\Fc L^{2,1}} \geq  d_2\,\varepsilon^{-1/2}\right).
\]
We claim that
\begin{equation}\label{eq:critical_upperb3}
\limsup_{\varepsilon\rightarrow 0^{+}}\varepsilon\, \log  \P \left( \norm{u_0}_{\Fc L^{2,1}} \geq  d_2\,\varepsilon^{-1/2}\right) \leq  -\frac{d_2^2}{C_b^2\, \sum_{k\in\Z} c_k}.
\end{equation}
Given that $\langle k \rangle^2 c_k\leq C_b \sqrt{c_k}$, the random variable  $\norm{u_0}_{\Fc L^{2,1}}\leq C_b \,\norm{\tilde{u}_0}_{\Fc L^{0,1}}$ almost surely, where $\tilde{u}_0$ has coefficients $\sqrt{c_k}$. As a result we can apply \Cref{thm:GE} and \eqref{eq:GE_LDP_upper} again to $\tilde{u}_0$ (with $b$ halved) in order to prove \eqref{eq:critical_upperb3}.

Thanks to our choice of $d_2$ in \eqref{eq:cond_d2}, \eqref{eq:critical_upperb3} implies have that the first term in \eqref{eq:critical_upperb} is asymptotically larger than the second one. In particular, we can arrange
\[
\P (\mathcal{D}_{\varepsilon}) \leq 2\, \P \left( \norm{u_{\mathrm{app}}(t)}_{L^{\infty}_x} > z_0 (\varepsilon^{-1/2}-\varepsilon^{-1/2+\delta})\right)
\]
for $\varepsilon$ small enough, which concludes the proof of the upper bound in view of \eqref{eq:critical_upperb2}.

Finally, let us prove a lower bound. First of all note that $\mathcal{D}^-_{\varepsilon}\subset \mathcal{D}_{\varepsilon}$ if we define
\[
\mathcal{D}^-_{\varepsilon}:=\left\lbrace  \norm{u_{\mathrm{app}}(t)}_{L^{\infty}_x}-\norm{u(t)-u_{\mathrm{app}}(t)}_{\Fc L^{2,1}} > z_0\,\varepsilon^{-1/2} \right\rbrace .
\]
Then we have that
\begin{equation}\label{eq:critical_lowerb}
\begin{split}
\P (\mathcal{D}_{\varepsilon}) & \geq \P (\mathcal{D}^-_{\varepsilon} \cap \mathcal{B}_{\varepsilon})\geq \P \left( \left\lbrace \norm{u_{\mathrm{app}}(t)}_{L^{\infty}_x}> z_0 (\varepsilon^{-1/2} + \varepsilon^{-1/2+\delta})\right\rbrace \cap \mathcal{B}_{\varepsilon}\right) \\
& \geq \P \left( \norm{u_{\mathrm{app}}(t)}_{L^{\infty}_x}> z_0 (\varepsilon^{-1/2} + \varepsilon^{-1/2+\delta})\right) - \P ( \mathcal{B}_{\varepsilon}^c)
\end{split}
\end{equation}

By \Cref{thm:resonantLDP}, we have that 
\begin{equation}\label{eq:critical_lowerb2}
\lim_{\varepsilon\rightarrow 0^{+}} \varepsilon \, \log \P \left( \norm{u_{\mathrm{app}}(t)}_{L^{\infty}_x} > z_0 (\varepsilon^{-1/2}+\varepsilon^{-1/2+\delta})\right) = -\frac{z_0^2}{\sum_{k\in\Z}c_k^2} .
\end{equation}
This describes the asymptotic behavior of the first summand in \eqref{eq:critical_lowerb}.
By \Cref{thm:small_error2}, if $\varepsilon<\varepsilon_0$ the second summand satisfies
\[
\P ( \mathcal{B}_{\varepsilon}^c)\leq \P \left( \norm{u_0}_{\Fc L^{2,1}} \geq  d_2 \varepsilon^{-1/2}\right).
\]
Using \eqref{eq:cond_d2} and arguing as in the proof of the upper bound, we conclude that this second term is asymptotically smaller than the first term in \eqref{eq:critical_lowerb}. In particular, we can arrange
\[
\P (\mathcal{D}_{\varepsilon})\geq \frac{1}{2} \P \left( \norm{u_{\mathrm{app}}(t)}_{L^{\infty}_x}> z_0 (\varepsilon^{-1/2} + \varepsilon^{-1/2+\delta})\right)
\]
for $\varepsilon$ small enough. Finally, \eqref{eq:critical_lowerb2} yields the desired asymptotic behavior and finishes the proof.

\section{Most Likely Initial Data for a Rogue Wave}\label{sec:MLE}

In the previous sections, we established the existence of a Large Deviations Principle for the solution of the NLS equation. This allowed us to quantify the probability that we observe a rogue wave: yet rare, rogue waves are actually plausible.

In this section, we change gears slightly and we ask ourselves the following question: ``Conditioned on the fact that we observe a rogue wave, what is the most likely initial datum that generates such phenomenon?'' For the sake of concreteness, we will focus on the solution of the linear Schr\"odinger equation. (In \Cref{rk:fromLineartoNLS}, we adapt our arguments for the linear equation to the nonlinear equation \eqref{eq:NLS_mainLDP} for critical and subcritical times.)

In order to answer this question, it is more convenient to take a different point of view regarding the initial data.  Let us consider the set
\[ 
\mathcal{H}=\{(r_k,\phi_k)_{k\in\Z}\mid r_k\geq 0,\, \phi_k\in [0,2\pi)\}.
\]
It is easy to see that there is a correspondence between elements of $\mathcal{H}$ and initial data for the solution $u$ to the linear Schr\"odinger equation 
\begin{equation}\label{eq:linearSchrodinger}
    \begin{cases}
    i\partial_t u + \Delta u = 0, \quad x\in \T ,\\
    u(t,x \,|\, \vartheta )|_{t=0} = u_0(x \,|\, \vartheta ) = \sum_{k\in\Z} c_k r_k e^{i kx + i\phi_k} ,
    \end{cases}
\end{equation}
via
\begin{equation}\label{eq:initialDataMapping}
\vartheta:=(r_k,\phi_k)_{k\in\Z} \in \mathcal{H} \mapsto u_0(x \,|\, \vartheta ) = \sum_{k\in\Z} c_k r_k e^{i kx + i\phi_k}.
\end{equation}

That is, we interpret the initial datum as a parameter for the solution of the linear Schr\"odinger equation. The variable $\vartheta$ lives on the space\footnote{Notice that we use the notation $(r_k,\phi_k)$, as opposed to $(R_k,\varphi_k)$, to emphasize the fact that we are not seeing the initial condition as random variables, but as parameters.} of parameters $\mathcal{H}$. To recover the probability framework we presented in \eqref{eq:intro_data}, we can endow $\mathcal{H}$ with the probability measure
\begin{equation}\label{eq:measureSpaceParameters}
m := \bigtimes_{k \in \Z} m_{R_k} \times m_{\varphi_k},
\end{equation}
where $R_k \sim Rayleigh(1/\sqrt{2})$, $\varphi_k \sim U[0,2\pi]$, $m_X$ is the probability measure of the random variable $X$, and $\bigtimes$ denotes the countable product of probability measures, as in Section 1.5 from \cite{daprato2006introduction}. The set of initial conditions that produce a rogue wave of height $z(\varepsilon):= z_0 \varepsilon^{-1/2}$ at time $t >0$, with $z_0 >0$ and $\varepsilon \rightarrow 0^+$, corresponds to 
\begin{equation}\label{eq:def_D}
\mathcal{D}(t, z(\varepsilon)):= \left \lbrace\vartheta \in \mathcal{H} : \sup_{x\in\T}\, \lvert u\left(t, x \,|\, \vartheta \right) \rvert \geq z_0 \varepsilon^{-1/2} \right \rbrace,
\end{equation}
where $u$ is the solution to the linear Schr\"odinger equation. By the definition of $m$ in \eqref{eq:measureSpaceParameters}, it is clear that $m(\{\vartheta\}) = 0$, for all $\vartheta \in \mathcal{H}$. Hence, the goal of the rest of the section is to give meaning to the idea of ``most likely initial datum'' that generates a rogue wave. 

\subsection{Heuristic Approximation Using MLE Techniques}

One possibility would be to say that $\vartheta^* \in \mathcal{D}(t, z(\varepsilon))$ is the most likely datum if it maximizes the density of the measure $m$, as it is customary in Statistics when studying Maximum Likelihood Estimators (MLEs). The problem with this approach is that it requires $m$ to be absolutely continuous with respect to a reference measure\footnote{Usually, the Lebesgue measure in $\R^K$ where $K$ is the dimension of the space of parameters}. This idea cannot be applied directly to our problem, since $\mathcal{H}$ is an infinite-dimensional vector space.

We propose a minimization problem, $\mathcal{P}$, to characterize the most likely initial data for a rogue wave. This pseudo-MLE problem comes motivated by studying the actual MLE problem when we truncate the number of non-zero Fourier modes for the initial datum, and then we make this number go to infinity. More concretely, let us define\footnote{In the definition of $\mathcal{D}^{(N)}(t, z(\varepsilon))$, we are implicitly using that there exists embeddings, $\mathcal{H}^{(N)} \hookrightarrow \mathcal{H}$ and $\mathcal{D}^{(N)}(t, z(\varepsilon)) \hookrightarrow \mathcal{D}(t, z(\varepsilon))$, via setting the missing Fourier modes equal to zero, so that we can define the solution of the linear Schr\"odinger equation through \eqref{eq:initialDataMapping}.}
\[ 
\mathcal{H}^{(N)}=\{(r_k,\phi_k)_{|k| \le N}\mid r_k\geq 0,\, \phi_k\in [0,2\pi) \},
\]
\[
m^{(N)}:= \bigtimes_{|k| \le N} m_{R_k} \times m_{\varphi_k},
\]
and
\[
\mathcal{D}^{(N)}(t, z(\varepsilon)):= \left \lbrace\vartheta \in \mathcal{H}^{(N)} : \sup_{x\in\T}\, \lvert u\left(t, x \,|\, \vartheta \right) \rvert \geq z_0 \varepsilon^{-1/2} \right \rbrace.
\]

 For this truncated scenario, the measure $m^{(N)}$ has a density with respect to the Lebesgue measure in $(\R^{2})^{2N+1}$. It is easy to see that the problem of maximizing this density on the set of initial data that generates a rogue wave is equivalent\footnote{Since we are using the polar representation of $\eta_k$ as $R_k e^{i \varphi_k}$, we need to express the Lebesgue measure in polar coordinates to obtain the claimed equivalence between both optimization problems.} to the following minimization problem:
\begin{equation}\label{eq:truncatedMinProblem}
    \mathcal{P}^{(N)}:=\begin{cases}
    \text{min } \sum_{|k|\le N} r_k^2,\\
   \quad \text{s.t. }(r_k,\phi_k)_{|k|\le N} \in \mathcal{D}^{(N)}(t, z(\varepsilon)).
    \end{cases}
\end{equation}

\noindent Formally taking $N \rightarrow \infty$, we can define $\mathcal{P}$ as
\begin{equation}\label{eq:mainMinProblem}
    \mathcal{P}:=\begin{cases}
    \text{min } \sum_{k \in \Z} r_k^2,\\
   \quad \text{s.t. }(r_k,\phi_k)_{k \in \Z} \in \mathcal{D}(t, z(\varepsilon)).
    \end{cases}
\end{equation}

We can show the existence of a minimizer for \eqref{eq:mainMinProblem} by solving a relaxation of the original problem. Let us define the set 
\[
\widetilde{\mathcal{D}}(t,z(\varepsilon))= \lbrace \vartheta \in \mathcal{H} : \sum_{k\in\Z} c_k r_k \geq z_0 \varepsilon^{-1/2} \rbrace
\]
and the relaxed problem 
\begin{equation}\label{eq:relaxationMinProblem}
    \widetilde{\mathcal{P}}:=\begin{cases}
    \text{min } \sum_{k \in \Z} r_k^2,\\
   \quad \text{s.t. }(r_k,\phi_k)_{k \in \Z} \in \widetilde{\mathcal{D}}(t, z(\varepsilon))
    \end{cases}.
\end{equation}
Note that $\mathcal{D}(t,z(\varepsilon))\subset \widetilde{\mathcal{D}}(t,z(\varepsilon))$ by \eqref{eq:overest}, and hence $\widetilde{\mathcal{P}}$ is, in fact, a relaxation of $\mathcal{P}$. Applying Lagrange multipliers formally\footnote{One could rigorously justify this step by assuming that $(r_k)_{k \in \Z} \in l^2(\Z)$, and then using the theory of Lagrange multipliers on Banach spaces. Note that this shrinkage of the feasible region does not affect the set of solutions of $\widetilde{\mathcal{P}}$. Hence, without loss of generality, in what follows we will use this assumption when solving $\mathcal{P}$ and $\widetilde{\mathcal{P}}$.}, we find the following ansatz: the set of all minimizers for \eqref{eq:relaxationMinProblem} is given by \begin{equation}\label{eq:relaxationMinSolution}
\widetilde{\mathcal{S}}:= \left\{(r_k^*,\phi_k)_{k \in \Z} \in \widetilde{\mathcal{D}}(t, z(\varepsilon)) : r_k^* = \frac{c_k z(\varepsilon)}{\sum_{k\in\Z} c_k^2} \right\}.
\end{equation}

To show that $\widetilde{\mathcal{S}}$ contains all the solutions for $\widetilde{\mathcal{P}}$, it is easy to see that, for any $(r_k,\phi_k)_{k \in \Z} \in \widetilde{\mathcal{D}}(t, z(\varepsilon))$, we have
\begin{align*}
& \sum_{k \in \Z} r_k^2  = \sum_{k \in \Z} (r_k^*)^2 + 2 r_k^*(r_k - r_k^*) + (r_k - r_k^*)^2 \ge \sum_{k \in \Z} (r_k^*)^2 + 2 \frac{z(\varepsilon)}{\sum_{k\in\Z} c_k^2}\sum_{k \in \Z}c_k r_k - c_k r_k^* \qquad \qquad \\
& \qquad \qquad \qquad \qquad \ge \sum_{k \in \Z} (r_k^*)^2 + 2 \left(\frac{z(\varepsilon)}{\sum_{k\in\Z} c_k^2}\right)^2 \sum_{k\in\Z} c_k^2 - 2\frac{z(\varepsilon)}{\sum_{k\in\Z} c_k^2}\sum_{k \in \Z}c_k r_k^* = \sum_{k \in \Z} (r_k^*)^2, 
\end{align*}
where the first inequality is strict unless $(r_k,\phi_k)_{k \in \Z} \in \widetilde{\mathcal{S}}$. Note that there are an infinite number of minimizers in $\widetilde{\mathcal{S}}$, since the phases are free. 

Back to the original problem, and using \eqref{eq:overest} once more, one can find a minimizer for $\mathcal{P}$ that lives in $\widetilde{\mathcal{S}}$ choosing the phases $(\phi^*_k)_{k \in \Z}$ such that
\begin{equation}\label{eq:phaseConstraint}
\sup_{x\in\T}\ \left\lvert \sum_{k\in\Z} c_k^2 \, e^{ikx -ik^2 t+ i\phi^*_k} \right\rvert \geq \sum_{k\in\Z} c_k^2.
\end{equation}
This is only possible if there exists some $x^*\in \T$ such that $kx^* -k^2 t +\phi^*_k=\phi^*_0$, for all $k \in \Z$. It is now clear that we can find a \textbf{two-parameter family of minimizers} for $\mathcal{P}$ depending on the position of the peak of the rogue wave, given by $x^*\in\T$, and the phase of the zeroth mode, given by $\phi^*_0\in [0,2\pi)$. These corresponds to the space-translation and phase rotation symmetries of the Schr\"odinger equation, both of which leave norm $\norm{u(t)}_{L^{\infty}_x}$ invariant. 

\noindent Now, we are finally ready to prove the following result.

\begin{prop}\label{prop:existence_minimizer}
Fix $z_0 >0$, $\varepsilon >0$, $x^*\in \T$, $\phi^*_0\in [0,2\pi)$, and $t>0$. Then there exists a unique solution to the minimization problem $\mathcal{P}$ in \eqref{eq:mainMinProblem} with peak at position $x^*\in\T$, time $t>0$, and zeroth phase $\phi^*_0$, given by $(r^*_k, \phi^*_k)_{k \in \Z}$, where
\begin{equation}\label{eq:optimal_r_k}
r_k^* := \frac{c_k z(\varepsilon)}{\sum_{j\in\Z} c_j^2},
\end{equation}
and
\begin{equation}\label{eq:MLE_phases}
\phi^*_k := \phi^*_0 - kx^* + k^2 t \, \left(\text{mod } 2\pi\right).
\end{equation}
Moreover, this minimizer satisfies
\begin{equation}\label{eq:conjecture}
\min_{(r_k,\phi_k)_{k\in\Z}\in \mathcal{D}(t,z(\varepsilon))} \sum_{k\in\Z} r_k^2 = \frac{z(\varepsilon)^2}{\sum_{k\in\Z} c_k^2},
\end{equation}
and corresponds to the initial datum
\begin{equation}\label{eq:u0_minimizer}
u_0^{\ast}(x)= \sum_{k\in\Z} \frac{c_k^2 z(\varepsilon)}{\sum_{j\in\Z} c_j^2} \, e^{ik(x-x^*)+ ik^2t+i\phi^*_0}.
\end{equation}
\end{prop}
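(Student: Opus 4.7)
The plan is to combine the relaxation analysis already carried out just before the statement with a direct verification that the proposed pair $(r_k^*,\phi_k^*)$ lies in $\mathcal{D}(t,z(\varepsilon))$ and saturates the lower bound. Concretely, \eqref{eq:overest} gives the inclusion $\mathcal{D}(t,z(\varepsilon))\subset \widetilde{\mathcal{D}}(t,z(\varepsilon))$, so any feasible point of $\mathcal{P}$ is feasible for $\widetilde{\mathcal{P}}$, and therefore
\[
\min_{\vartheta\in\mathcal{D}(t,z(\varepsilon))} \sum_{k\in\Z} r_k^2 \;\geq\; \min_{\vartheta\in\widetilde{\mathcal{D}}(t,z(\varepsilon))} \sum_{k\in\Z} r_k^2 \;=\; \frac{z(\varepsilon)^2}{\sum_{k\in\Z} c_k^2},
\]
where the last equality follows from the characterization of $\widetilde{\mathcal{S}}$ established earlier via the Lagrange-multiplier computation.

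To check that this bound is attained at $\vartheta^*=(r_k^*,\phi_k^*)$, I would flow the corresponding initial datum to time $t$ and evaluate at $x=x^*$. Using $u(t,x\mid\vartheta)=\sum_k c_k r_k e^{ikx-ik^2 t+i\phi_k}$ together with \eqref{eq:optimal_r_k}--\eqref{eq:MLE_phases},
\[
u(t,x^*\mid\vartheta^*) \;=\; \sum_{k\in\Z} c_k r_k^*\, e^{ikx^* - ik^2 t + i(\phi_0^*-kx^*+k^2 t)} \;=\; e^{i\phi_0^*}\sum_{k\in\Z} c_k r_k^* \;=\; e^{i\phi_0^*}\, z(\varepsilon),
\]
so $\sup_{x\in\T}|u(t,x\mid\vartheta^*)|\geq z(\varepsilon)$ and $\vartheta^*\in\mathcal{D}(t,z(\varepsilon))$. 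A direct computation gives $\sum_k (r_k^*)^2 = z(\varepsilon)^2/\sum_k c_k^2$, which proves the existence part and the value \eqref{eq:conjecture}. The formula \eqref{eq:u0_minimizer} for $u_0^*$ is immediate from \eqref{eq:initialDataMapping} upon combining the exponential factors $e^{i\phi_k^*}= e^{i\phi_0^* - ikx^* + ik^2 t}$.

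For uniqueness under the constraints on $x^*$ and $\phi_0^*$, suppose $(r_k,\phi_k)_{k\in\Z}\in\mathcal{D}(t,z(\varepsilon))$ is any minimizer whose peak at time $t$ is located at $x^*$ and whose zeroth phase equals $\phi_0^*$. Since the minimum values of $\mathcal{P}$ and $\widetilde{\mathcal{P}}$ coincide, this $\vartheta$ is also a minimizer of $\widetilde{\mathcal{P}}$, hence lies in $\widetilde{\mathcal{S}}$, forcing $r_k=r_k^*$ for all $k$. With $r_k=r_k^*$, the chain of inequalities in \eqref{eq:overest} is sandwiched as $z(\varepsilon)^2 \leq \sup_x|u(t,x\mid\vartheta)|^2 \leq \bigl(\sum_k c_k r_k^*\bigr)^2 = z(\varepsilon)^2$, so equality must hold at $x^*$. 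Writing $\psi_k(t,x)=\phi_k+kx-k^2 t$, this equality forces $\cos(\psi_j(t,x^*)-\psi_k(t,x^*))=1$ for every $j,k\in\Z$ (all $r_k^*$ being strictly positive); equivalently $\psi_k(t,x^*)$ is independent of $k$ modulo $2\pi$. Evaluating at $k=0$ fixes the common value to be $\phi_0^*$, and \eqref{eq:MLE_phases} follows.

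The main subtlety will be the phase uniqueness: the bound \eqref{eq:overest} was used loosely in the earlier sections as a supremum estimate, and one has to unpack it carefully to extract the equality condition $\psi_k(t,x^*)\equiv\phi_0^*\pmod{2\pi}$ at the prescribed peak. Specifying both the peak location $x^*$ and the zeroth phase $\phi_0^*$ is what breaks the two-parameter family of minimizers and pins down a single element; everything else reduces to plugging the ansatz into \eqref{eq:linearSchrodinger} and invoking the relaxation argument sketched in the paragraphs preceding the proposition.
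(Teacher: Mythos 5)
Your proof is correct and follows essentially the same route as the paper: lower bound via the relaxation $\widetilde{\mathcal{P}}$ and $\widetilde{\mathcal{S}}$, feasibility by evaluating at $x^*$, and uniqueness by forcing equality in the chain from \eqref{eq:overest}. Where the paper merely asserts that the constraint \eqref{eq:phaseConstraint} is ``binding,'' you make the same point more explicitly by extracting $\cos(\psi_j(t,x^*)-\psi_k(t,x^*))=1$ for all $j,k$ (valid since every $r_k^*>0$), which is a cleaner way to pin down the phases once the moduli are fixed.
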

\begin{proof}
The fact that $(r^*_k, \phi^*_k)_{k \in \Z}$ is a minimizer for $\mathcal{P}$ follows from the fact that it is a minimizer for the relaxed problem $\widetilde{\mathcal{P}}$. Hence, existence is established. It is also easy to see that \eqref{eq:conjecture} and \eqref{eq:u0_minimizer} follow from direct computation.

To establish uniqueness, let us assume that $\vartheta^{new}:=(r^{new}_k, \phi^{new}_k)_{k \in \Z}$ is another minimizer for $\mathcal{P}$. By \eqref{eq:relaxationMinProblem} and \eqref{eq:relaxationMinSolution}, we have that $\vartheta^{new}$ must also live in $\widetilde{\mathcal{S}}$, and hence $r^{new}_k = r^*_k$, for all $k \in \Z$. To see that the phases must coincide as well, one just needs to realize that the constraint in \eqref{eq:phaseConstraint} is actually binding for $(r^*_k, \phi^*_k)_{k \in \Z}$. Hence, it must also be binding for $\vartheta^{new}$ (otherwise, we could reduce the value of the objective function), which implies that $\phi^{new}_k = \phi^*_k$, for all $k \in \Z$, as we wanted to prove.
\end{proof}

\begin{rk}
One can show that, under the same conditions of \Cref{prop:existence_minimizer}, the moduli of the unique minimizer for $\mathcal{P}^{(N)}$ converges in $l^2$ to $(r^*_k)_{k \in \Z}$, and the phases end up aligning with $(\phi^*_k)_{k \in \Z}$. This justifies the step where we formally took $N \rightarrow \infty$ to propose $\mathcal{P}$.
\end{rk}

%\begin{rk}\label{rk:NonUniquenessMinimizer}
%In \Cref{prop:existence_minimizer} it does not suffice to fix $(t, x^*)$ to obtain uniqueness of the minimizer for the problem $\mathcal{P}$ (or $\mathcal{P}^N$, in the truncated case). One also needs to fix the phase of one of the Fourier modes. As pointed out in the introduction, this lack of uniqueness is one of the reasons why the method proposed in \cite{VandenEijnden2} does not apply to the problem we are studying in this paper.
%\end{rk}

\subsection{Concentration result}

Our next goal is to show that the minimizers described in \Cref{prop:existence_minimizer} concentrate most of the probability of the set $\mathcal{D}(t,z(\varepsilon))$. To make sense of this notion, we construct a 
%one-sided 
neighborhood $\mathcal{U}(\varepsilon,m_1, m_2)$ of the minimizers, given by the $(r_k, \phi_k)_{k\in\Z}\in\mathcal{H}$ satisfying two conditions:
\begin{equation}\label{eq:condU1}
   r^*_k \leq r_k \leq r^*_k + \varepsilon^{\beta} \qquad \mbox{for all}\ |k|\leq m_1,
\end{equation}
%\begin{equation}\label{eq:condU1}
%    \frac{c_k }{\sum_{j\in\Z} c_j^2}\, z_0 \varepsilon^{-1/2} - \varepsilon^{\beta} \leq r_k \leq \frac{c_k }{\sum_{j\in\Z} c_j^2}\, z_0 \varepsilon^{-1/2} + \varepsilon^{\beta} \qquad \mbox{for all}\ |k|\leq m_1,
%\end{equation}
for $\beta>0$ to be defined, and
\begin{equation}\label{eq:condU2}
 |\phi_k -\phi_k^{\ast}|\leq \varepsilon\qquad \mbox{for all}\ |k|\leq m_2\ \mbox{except $k=0,1$},
\end{equation}
where $r^*_k$ and $\phi_k^{\ast}=\phi_k^{\ast}(\phi_0,\phi_1)$ were defined in \eqref{eq:optimal_r_k} and \eqref{eq:MLE_phases}, respectively. Condition \eqref{eq:condU1} is a one-sided interval around the minimizers' moduli, $(r^*_k)_{k\in\Z}$. Condition \eqref{eq:condU2} captures the fact that a large number of phases must align at the right time $t>0$ for a rogue wave to happen. Notice that $(r^*_k, \phi^*_k)_{k \in \Z} \in \mathcal{U}(\varepsilon,m_1, m_2)$, for any choice of $m_1$ and $m_2$.
%\begin{equation}\label{eq:condU2}
%    \sum_{|k|\leq m} c_k r_k \geq \frac{z_0 \varepsilon^{-1/2}}{\sqrt{1-2\, \max_{\substack{|k|\leq m\\ k\neq 0,1}}|\phi_k - \phi_k^{\ast}|^2}}.
%\end{equation}

The numbers $m_1$ and $m_2$ will be chosen later in such a way that they go to infinity as $\varepsilon\rightarrow 0^+$. In particular, we will see that $m_1 \sim |\log \varepsilon|$ and $m_2 \sim \varepsilon^{-\alpha}$ for $\alpha \in (0,1)$. Hence, $m_1 \ll m_2$ as $\varepsilon\rightarrow 0^+$. That is, for $|k| \in (m_1, m_2)$, the size of the moduli is irrelevant, as long as the phases are synchronized. This is due to the fast damping effect of $(c_k)_{k\in\Z}$. However, it is not enough to control the modulus of $2m_1+1$ Fourier modes to generate a rogue wave. A lack of synchronization in the phases for $|k| \in (m_1, m_2)$ could potentially destroy a rogue wave generated by the first $2m_1+1$ Fourier modes. This two-scale structure for the linear Schr\"odinger equation is a new phenomenon\footnote{This mechanism to generate rogue waves highly depends on the coefficients $(c_k)_{k\in\Z}$. See \Cref{rk:coeff} for a motivation of our choice.} in the study of the generation of rogue waves. 

%\blue{R to M: Here's a possible solution (to be confirmed) to the problem that will come up later. First let us define $\Phi_m = \max_{\substack{|k|\leq m\\ k\neq 0,1}}|\phi_k - \phi_k^{\ast}|$. Then we could exchange (5.13) by the more strict condition:
%\[
%\sum_{|k|\leq m} c_k r_k \geq \frac{z_0 \varepsilon^{-1/2}+z_0 \Phi_m \varepsilon^{100}}{\sqrt{1-2\, \max_{\substack{|k|\leq m\\ k\neq 0,1}}|\phi_k - \phi_k^{\ast}|^2}}.
%\]
%The presence of $z_0 \Phi_m \varepsilon^{100}$ is due to two reasons: 1) $\Phi_m$ vanishes at the MLE so they're still contained in $U$ (which wouldn't be true if we remove it). 2) When $\Phi_m\neq 0$, we have some extra room due to $z_0\varepsilon^{100}$ to make sure the rest of the Fourier modes $|k|>m$ don't make the wave smaller than $z_0\varepsilon^{-1/2}$ (at least with very large probability).
%}

%Note that the minimizers live in the boundary of the (closed) set  $\mathcal{U}(\varepsilon,m)$, since we have that $\phi_k =\phi_k^{\ast}$ for all $k$ and \eqref{eq:condU2} is trivially satisfied since $\max_{|k|\leq m}|\phi_k - \phi_k^{\ast}|=0$.

Our first result shows that if one chooses $m_1$ and $m_2$ as proposed above, the set $\mathcal{U}(\varepsilon,m_1, m_2)$ has the same asymptotic probability as $\mathcal{D}(t,z(\varepsilon))$.

\begin{prop}\label{thm:LDP_U} 
Fix $z_0>0$. Suppose that 
\begin{equation}\label{eq:m1}
m_1=m_1(\varepsilon)=-\frac{1}{b} \log \left( \frac{2\sum_{j\in\Z} c_j^2}{z_0}\, \varepsilon^{1/2+ \beta}\right)
\end{equation}
for $\beta\in (0,1/2)$ and $m_2=m_2(\varepsilon)=\varepsilon^{-\alpha}$ for $\alpha\in (0,1)$. Then 
\begin{equation}\label{eq:LDP_U}
\lim_{\varepsilon\rightarrow 0^{+}} \varepsilon\, \log \P ( \mathcal{U}(\varepsilon,m_1, m_2)) = - \frac{z_0^2}{\sum_{k\in\Z} c_k^2}.
\end{equation}
\end{prop}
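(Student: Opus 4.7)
The plan is to exploit independence to compute $\P(\mathcal{U}(\varepsilon, m_1, m_2))$ as an explicit product of marginals and extract the leading asymptotic behavior. Under the product measure $m$ of \eqref{eq:measureSpaceParameters}, conditions \eqref{eq:condU1}--\eqref{eq:condU2} give
\begin{equation*}
\P(\mathcal{U}(\varepsilon, m_1, m_2)) = \prod_{|k|\leq m_1} \bigl(e^{-(r_k^*)^2} - e^{-(r_k^* + \varepsilon^{\beta})^2}\bigr) \cdot \Bigl(\frac{\varepsilon}{\pi}\Bigr)^{2m_2 - 1},
\end{equation*}
where the modulus factor comes from integrating the Rayleigh$(1/\sqrt{2})$ density and the phase factor uses that $\varphi_k \sim U[0,2\pi)$ is translation invariant (so the $t$-dependence of $\phi_k^*$ drops out). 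Taking logarithms splits the expression into three pieces:
\begin{equation*}
\log \P(\mathcal{U}(\varepsilon,m_1,m_2)) = -\sum_{|k|\leq m_1}(r_k^*)^2 + \sum_{|k|\leq m_1}\log\bigl(1 - e^{-2r_k^*\varepsilon^\beta - \varepsilon^{2\beta}}\bigr) + (2m_2 - 1)\log(\varepsilon/\pi).
\end{equation*}

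The first piece delivers the correct rate. Using \eqref{eq:optimal_r_k} and $m_1(\varepsilon) \to \infty$,
\begin{equation*}
-\varepsilon\sum_{|k|\leq m_1}(r_k^*)^2 = -\frac{z_0^2}{\bigl(\sum_{j\in\Z} c_j^2\bigr)^2}\sum_{|k|\leq m_1} c_k^2 \xrightarrow[\varepsilon\to 0^+]{} -\frac{z_0^2}{\sum_{j\in\Z} c_j^2},
\end{equation*}
and the remaining task is to show the other two pieces contribute only $o(\varepsilon^{-1})$. The phase term is easy: since $m_2 = \varepsilon^{-\alpha}$ with $\alpha<1$, we have $\varepsilon(2m_2 - 1)|\log(\varepsilon/\pi)| \lesssim \varepsilon^{1-\alpha}|\log \varepsilon|\to 0$.

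The main obstacle is the modulus correction. Each summand is non-positive, so only a lower bound is required. Here the choice \eqref{eq:m1} of $m_1$ is crucial: a direct computation gives $c_{m_1} = (2\sum_j c_j^2/z_0)\,\varepsilon^{1/2+\beta}$, hence $r_k^* \geq r_{m_1}^* = 2\varepsilon^\beta$ for every $|k|\leq m_1$, and therefore $x_k := 2r_k^*\varepsilon^\beta + \varepsilon^{2\beta} \geq 4\varepsilon^{2\beta}$. Applying the elementary inequality $\log(1 - e^{-x}) \geq \log(x/2)$ for $x \in (0,1]$ (together with the trivial $\log(1 - e^{-x}) \geq \log(1 - e^{-1})$ for $x\geq 1$) bounds each term from below by $2\beta\log\varepsilon - O(1)$. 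Summing over the $2m_1 + 1 = O(|\log\varepsilon|)$ indices yields a contribution of size $O(|\log\varepsilon|^2)$, which multiplied by $\varepsilon$ vanishes. Combining these three estimates then produces matching upper and lower bounds on $\varepsilon \log\P(\mathcal{U}(\varepsilon, m_1, m_2))$ and establishes \eqref{eq:LDP_U}.
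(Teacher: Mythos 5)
Your proof is correct and follows essentially the same structure as the paper's: split $\log\P(\mathcal{U})$ into the phase piece, the leading modulus piece giving the rate, and an error term $h(\varepsilon)$, then show the first and third vanish after multiplying by $\varepsilon$. (You even fix a small normalization slip in the paper, which writes $(2\varepsilon)^{2m_2-1}$ for the phase probability where it should be $(\varepsilon/\pi)^{2m_2-1}$ under $U[0,2\pi)$; it is immaterial since the correction is $o(\varepsilon^{-1})$.) The only substantive difference is how the error term is bounded: the paper applies $\log(1+y)\ge \frac{y}{1+y}$ with $y=-e^{-x_k}$, estimates the worst term at $|k|=m_1$ (where $x_{m_1}=5\varepsilon^{2\beta}$), and gets $h(\varepsilon)\gtrsim -\varepsilon\,m_1(\varepsilon)\,(1-e^{-5\varepsilon^{2\beta}})^{-1}\sim -\varepsilon^{1-2\beta}|\log\varepsilon|$, which uses $\beta<1/2$; you instead use $\log(1-e^{-x})\ge\log(x/2)$ on $(0,1]$ (plus the trivial bound for $x\ge 1$) to get a uniform per-term lower bound of $2\beta\log\varepsilon-O(1)$, so $h(\varepsilon)\gtrsim-\varepsilon|\log\varepsilon|^2$. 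Your bound is slightly sharper and does not visibly lean on $\beta<1/2$, but both arguments identify the same crucial point: the choice \eqref{eq:m1} of $m_1$ forces $r_{m_1}^*=2\varepsilon^\beta$, so the Rayleigh tail increments stay bounded away from zero in the right way.
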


%\begin{rk}\label{rk:newGE_Theorem}
%\red{M to M: Review GE once more.}
%To establish \Cref{thm:LDP_U}, we will use a modification of the G\"artner-Ellis Theorem that we presented in \Cref{thm:GE}. Notice that \eqref{eq:GE_LDP_lower} and \eqref{eq:GE_LDP_upper} still hold if we redefine $Z_{\varepsilon}$ in \eqref{eq:probMeasuresGE} as $\varepsilon^{1/2} \sum_{|k|\leq m_1(\varepsilon)} c_k R_k$, as long as $m_1=m_1(\varepsilon)\rightarrow \infty$ as $\varepsilon\rightarrow 0^+$. One can see this by noticing that in order to establish \eqref{eq:GE-MGF-limit} we can repeat the arguments in Section 2 starting at \eqref{eq:def_Fn} where one needs to replace the dummy variable $n$ by $m_1$.
%\end{rk}

\begin{proof}
Using \eqref{eq:condU1}, \eqref{eq:condU2}, and the independence between moduli and phases, we may bound
\begin{align}\label{eq:lower_bound_U}
\P(\mathcal{U}(\varepsilon, m_1, m_2)) & = \pp{ \sup_{|k|\leq m_2, k\neq 0,1} |\varphi_k -\phi_k^{\ast}(\varphi_0,\varphi_1)|\leq \varepsilon} \nonumber \prod_{|k|\leq m_1} \pp{r^*_k \leq R_k \leq r^*_k + \varepsilon^{\beta}} \nonumber \\
& = (2\varepsilon)^{2m_2-1}\ \prod_{|k|\leq m_1} \left[e^{-\left(\frac{c_k }{\sum_{j\in\Z} c_j^2}\, z_0 \varepsilon^{-1/2}\right)^2} - e^{-\left(\frac{c_k }{\sum_{j\in\Z} c_j^2}\, z_0 \varepsilon^{-1/2} \,+ \, \varepsilon^{\beta}\right)^2}\right],
\end{align}
where we used the definition of $r^*_k$ in \eqref{eq:optimal_r_k}. Next, we take logarithms in \eqref{eq:lower_bound_U} and we multiply by $\varepsilon$, which yields
\begin{equation}\label{eq:prob_U}
\begin{split}
& \varepsilon \, \log \P(\mathcal{U}(\varepsilon, m_1, m_2)) = \varepsilon\, [2m_2(\varepsilon)-1]\,\log (2\varepsilon) \\ & \qquad - \varepsilon \sum_{|k|\leq m_1(\varepsilon)} \left(\frac{c_k }{\sum_{j\in\Z} c_j^2}\, z_0 \varepsilon^{-1/2} \right)^2 + \varepsilon \sum_{|k|\leq m_1(\varepsilon)} \log \left( 1- e^{-2\, \frac{c_k }{\sum_{j\in\Z} c_j^2}\, z_0 \varepsilon^{-1/2+\beta}-\varepsilon^{2\beta}}\right).
\end{split}
\end{equation}

Given that $m_2(\varepsilon)=\varepsilon^{-\alpha}$ for $\alpha\in (0,1)$, the first term on the right-hand side of \eqref{eq:prob_U} tends to zero. That is, $\lim_{\varepsilon\rightarrow 0^{+}} \varepsilon\, [2m_2(\varepsilon)-1]\,\log (2\varepsilon)  = 0$.
The second term on the right-hand side of \eqref{eq:prob_U} yields
\[
\lim_{\varepsilon\rightarrow 0^{+}} \varepsilon \sum_{|k|\leq m_1(\varepsilon)} \left(\frac{c_k }{\sum_{j\in\Z} c_j^2}\, z_0 \varepsilon^{-1/2}\right)^2  = \lim_{\varepsilon\rightarrow 0^{+}} \frac{\sum_{|k|\leq m_1(\varepsilon)} c_k^2}{(\sum_{k\in\Z} c_k^2)^2}\,z_0^2= \frac{z_0^2}{\sum_{k\in\Z} c_k^2}
\]
given that $m_1(\varepsilon)\rightarrow \infty$. All that remains to show is that the last term on the right-hand side of \eqref{eq:prob_U} tends to zero. That is, if we define
\[
h(\varepsilon)=\varepsilon \sum_{|k|\leq m_1(\varepsilon)} \log \left( 1- e^{-2\, \frac{c_k }{\sum_{j\in\Z} c_j^2}\, z_0 \varepsilon^{-1/2+\beta}-\varepsilon^{2\beta}}\right),
\]
we would like to show that $\lim_{\varepsilon\rightarrow 0^{+}} h(\varepsilon)=0$. One can compute this limit using the inequalities
\begin{equation}\label{eq:logy}
\frac{y}{1+y} \leq \log (1+y) \leq y, \quad \mbox{for}\ y>-1,
\end{equation}
with $y = -\exp\left(-2\,\frac{c_k }{\sum_{j\in\Z} c_j^2}\, z_0 \varepsilon^{-1/2+\beta}-\varepsilon^{2\beta}\right)$. The right-hand side of \eqref{eq:logy} gives us that $h(\varepsilon)\leq 0$. Using the left-hand side of \eqref{eq:logy}, it follows that 
\[
\begin{split}
h(\varepsilon) & \geq - \varepsilon\sum_{|k|\leq m_1(\varepsilon)} \frac{\exp\left(-2\,\frac{c_k }{\sum_{j\in\Z} c_j^2}\, z_0 \varepsilon^{-1/2+\beta}-\varepsilon^{2\beta}\right)}{1- \exp\left(-2\,\frac{c_k }{\sum_{j\in\Z} c_j^2}\, z_0 \varepsilon^{-1/2+\beta}-\varepsilon^{2\beta}\right)}\\
& \geq - \varepsilon \, m_1(\varepsilon)\,  \frac{\exp\left(-2\,\frac{c_{m_1} }{\sum_{j\in\Z} c_j^2}\, z_0 \varepsilon^{-1/2+\beta}-\varepsilon^{2\beta}\right)}{1- \exp\left(-2\,\frac{c_{m_1} }{\sum_{j\in\Z} c_j^2}\, z_0 \varepsilon^{-1/2+\beta}-\varepsilon^{2\beta}\right)}\\
& \gtrsim -\varepsilon \, m_1(\varepsilon)\, \frac{1}{1-\exp \left(-5\,\varepsilon^{2\beta}\right)} \longrightarrow 0
\end{split}
\]
as long as $\beta<1/2$. The last inequality is due to the choice of $m_1$ in \eqref{eq:m1}, which implies that 
\[
\frac{c_{m_1}}{\sum_{j\in\Z}c_j^2} z_0 \varepsilon^{-1/2}\geq 2 \varepsilon^{\beta}.
\]
This concludes the proof of \Cref{thm:LDP_U}.
\end{proof}

 %As with \eqref{eq:truncated_LDP}, one can show that 
%\begin{equation}\label{eq:truncated_LDP2}
%\lim_{\varepsilon\rightarrow 0^{+}} \varepsilon\, \log  \P \left(\sum_{|k|\leq m} c_k R_k \geq \frac{z_0 \varepsilon^{-1/2}}{\sqrt{1-2\, \varepsilon^2}}\right) = -\frac{z_0^2}{\sum_{k\in\Z}c_k^2}
%\end{equation}
%as long as $m(\varepsilon)\rightarrow\infty$ as $\varepsilon\rightarrow 0^+$. 

%Back to \eqref{eq:lower_bound_U}, we have that 
%\[
%\varepsilon\,\log \P (\mathcal{U}(\varepsilon,m))\geq \varepsilon\, [2m(\varepsilon)-1]\,\log (2\varepsilon) + \varepsilon\, \log  \P \left(\sum_{|k|\leq m} c_k R_k \geq \frac{z_0 \varepsilon^{-1/2}}{\sqrt{1-2\, \varepsilon^2}}\right).
%\]
%In view of \eqref{eq:truncated_LDP2},
%we can finish the proof if we show that 
%\[
%\lim_{\varepsilon\rightarrow 0^{+}} \varepsilon\, [2m(\varepsilon)-1]\,\log (2\varepsilon)  = 0.
%\]
%This certainly holds if $m(\varepsilon)\lesssim \varepsilon^{-1+}$.

In view of \Cref{thm:LDP_U}, we have constructed a one-sided neighborhood of the minimizers, $\mathcal{U}(\varepsilon,m_1,m_2)$, which (asymptotically) concentrates as much probability as the set of rogue waves $\mathcal{D}(t,z(\varepsilon))$. Note, however, that we are allowing some flexibility in $\mathcal{U}$, since we only control a finite number of Fourier modes, and therefore $\mathcal{U}$ may contain waves that will not quite reach height $z_0\varepsilon^{-1/2}$. Hence, a better set to compare $\mathcal{U}$ with is
\begin{equation}
\mathcal{D} (t,z(\varepsilon) - a(\varepsilon))= \left\lbrace \vartheta\in\mathcal{H} : \sup_{x\in\T} |u(t,x\, |\,\vartheta)|\geq z_0 \varepsilon^{-1/2}  - a(\varepsilon)\right\rbrace,
\end{equation}
with $a(\varepsilon) \rightarrow 0$ as $\varepsilon \rightarrow 0^+$. Notice that $\mathcal{D} (t,z(\varepsilon) - a(\varepsilon))$ is virtually indistinguishable from $\mathcal{D}(t,z(\varepsilon))$ and enjoys the same LDP.

Our next goal is to show that $\mathcal{U}(\varepsilon,m_1,m_2)$ is almost fully contained in $\mathcal{D} (t,z(\varepsilon) - a(\varepsilon))$. In order to prove this, we need a result about the solution to \eqref{eq:linearSchrodinger} for initial data in $\mathcal{U}(\varepsilon,m_1,m_2)$. 

\begin{lem} 
Consider the set 
\[
\mathcal{U}_2 (\varepsilon,m_2) = \{(\phi_k)_{k\in\Z}\mid |\phi_k -\phi_k^{\ast}|\leq \varepsilon\quad \mbox{for all}\ |k|\leq m_2,\ \mbox{except $k=0,1$} \}.
\]
If $u$ is the solution to \eqref{eq:linearSchrodinger} with initial data given by $(r_k,\phi_k)_{k\in\Z}\in\mathcal{H}$ and $(\phi_k)_{k\in\Z}\in \mathcal{U}_2 (\varepsilon,m_2)$, then we have that
\begin{equation}\label{eq:truncation+error}
    \sup_{x\in\T}|u(t,x)|^2 \geq (1-2\varepsilon^2)\, \norm{ (c_k r_k)_{|k|\leq m_1}}_{\ell^1_k}^2 + E(\varepsilon,m_1, m_2),
\end{equation}
where the error term $E(\varepsilon,m_1, m_2):= E^+(\varepsilon,m_1, m_2)+E^-(\varepsilon,m_1, m_2)$, with
\begin{equation}\label{eq:truncation_error_positive}
\begin{split}
    E^+(\varepsilon,m_1,m_2)= \,(1&-2\varepsilon^2) \, \left(\norm{ (c_k r_k)_{|k|\leq m_2}}_{\ell^1_k}^2 - \norm{ (c_k r_k)_{|k|\leq m_1}}_{\ell^1_k}^2\right)
    \\ & \, +2\varepsilon^2\, \norm{ (c_k r_k)_{|k|\leq m_2}}_{\ell^2_k}^2 + \norm{ (c_k r_k)_{|k|> m_2}}_{\ell^2_k}^2
    \end{split}
\end{equation}
and
\begin{equation}\label{eq:truncation_error_negative}
E^-(\varepsilon,m_1,m_2) = -\sum_{(j,k)\in \mathcal{R}}c_j c_k r_j r_k,
\end{equation}
where $\mathcal{R}$ is defined as $\{(j,k)\in\Z^2: j\neq k,\ \mbox{and}\ |j|>m_2(\varepsilon) \ \mbox{or}\ |k|> m_2(\varepsilon)\}$.
\end{lem}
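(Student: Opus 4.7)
The plan is to obtain the lower bound by evaluating $|u(t,x)|^2$ at the single point $x=x^*$ at which the ideal phases $\phi_k^*$ align, and then to estimate the resulting double sum from below by splitting it according to whether the indices lie in the ``controlled'' range $A:=\{|k|\leq m_2\}$. On $A$ the phases are nearly synchronized, so cosines are close to $1$; off $A$ we simply discard mass using the trivial bound $\cos\geq -1$, which is precisely the content of $E^-$.

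More concretely, the identity \eqref{eq:MLE_phases} involves two free parameters $(\phi_0^*, x^*)$, and since $\mathcal{U}_2$ places no constraint on $\phi_0$ and $\phi_1$, I would set $\phi_0^*:=\phi_0$ and fix $x^*\in\T$ so that $\phi_1\equiv\phi_0^*-x^*+t \,(\text{mod } 2\pi)$; with these choices the phase defects $\delta_k:=\phi_k-\phi_k^*$ satisfy $\delta_0=\delta_1=0$ and, by hypothesis on $\mathcal{U}_2$, $|\delta_k|\leq \varepsilon$ for every $k\in A$. Using the defining identity $kx^*-k^2 t+\phi_k^*\equiv \phi_0^*\pmod{2\pi}$, evaluation at $x=x^*$ gives
\[
u(t,x^*)= e^{i\phi_0^*}\sum_{k\in\Z} c_k r_k\, e^{i\delta_k},
\]
so, after taking the squared modulus and noting that the imaginary part cancels by symmetry in $(j,k)$,
\[
\sup_{x\in\T}|u(t,x)|^2\;\geq\; |u(t,x^*)|^2 \;=\; \sum_{j,k\in\Z} c_j c_k r_j r_k \cos(\delta_j-\delta_k).
\]

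The heart of the argument is a three-way split of this double sum. On the diagonal $j=k$ the cosine equals $1$, contributing exactly $\|(c_k r_k)_{k\in\Z}\|_{\ell^2}^2$. For $j\neq k$ with both $j,k\in A$, the elementary estimate $\cos(y)\geq 1-y^2/2$ gives $\cos(\delta_j-\delta_k)\geq 1-2\varepsilon^2$, and since $c_jc_kr_jr_k\geq 0$ this produces $(1-2\varepsilon^2)\sum_{j\neq k,\, j,k\in A} c_j c_k r_j r_k$. For the remaining pairs — precisely those in $\mathcal{R}$ — I use only $\cos\geq -1$, producing the negative remainder $E^-$.

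The last step is a short algebraic rewriting. Using $\sum_{j\neq k,\, j,k\in A}c_jc_kr_jr_k = \|(c_kr_k)_{k\in A}\|_{\ell^1}^2-\|(c_kr_k)_{k\in A}\|_{\ell^2}^2$ and combining with the diagonal gives
\[
(1-2\varepsilon^2)\|(c_kr_k)_{|k|\leq m_2}\|_{\ell^1}^2+2\varepsilon^2\|(c_kr_k)_{|k|\leq m_2}\|_{\ell^2}^2+\|(c_kr_k)_{|k|>m_2}\|_{\ell^2}^2;
\]
telescoping the $\ell^1$ norm over $A$ against the norm over $\{|k|\leq m_1\}$ then isolates the leading term $(1-2\varepsilon^2)\|(c_kr_k)_{|k|\leq m_1}\|_{\ell^1}^2$ and assembles the residual into $E^+$, reproducing \eqref{eq:truncation+error}--\eqref{eq:truncation_error_negative}. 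There is no real analytical obstacle here; the only subtle point is the opening calibration of $(\phi_0^*,x^*)$ in terms of the unconstrained modes $\phi_0,\phi_1$, which is what allows the cosine estimate to apply uniformly across all of $A$ rather than only across $A\setminus\{0,1\}$.
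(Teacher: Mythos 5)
Your proof is correct and follows essentially the same route as the paper: evaluate $|u(t,\cdot)|^2$ at the synchronization point $x^*$ determined by the free modes $\phi_0,\phi_1$, bound $\cos(\delta_j-\delta_k)\geq 1-2\varepsilon^2$ for pairs in $\{|j|,|k|\leq m_2\}$ via $1-\cos y\leq y^2/2$, use $\cos\geq -1$ on $\mathcal{R}$, and rearrange. The only cosmetic difference is that you first factor out $e^{i\phi_0^*}$ and square, whereas the paper starts from the expansion \eqref{eq:overest} and then restricts to $x^*$; the calculations are identical.
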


\begin{rk}\label{rk:ErrorIsPositive}
Notice that, when $m_1 \le m_2$, one has in fact that $E^+(\varepsilon,m_1,m_2) \ge 0$.
\end{rk}

\begin{proof}
As in \eqref{eq:overest}, we write 
\begin{equation}\label{eq:truncation_bound1}
\sup_{x\in\T}|u(t,x)|^2  = \sum_{k \in \Z} (c_k r_k)^2 + \sup_{x \in \T} \sum_{j\neq k} c_j c_k r_j r_k \cos(\psi_j -\psi_k)
\end{equation}
where $\psi_k=\psi_k (t,x) = \phi_k + k x - k t^2$. Let $\phi_0,\phi_1\in[0,2\pi)$ be free and choose $x^*$ so that 
\[
\phi_1 + x^* -t^2 = \phi_0.
\]
Then we define $\phi^{\ast}_k=\phi^{\ast}_k(\phi_0, \phi_1)$ for $k\neq 0,1$ according to \eqref{eq:MLE_phases}. For $|j|,|k|\leq m_2$, $j,k\neq 0,1$, we use the trivial bound $1- \cos y \leq y^2/2$ together with the fact that $(\phi_k)_{k\in\Z}\in \mathcal{U}_2 (\varepsilon,m_2)$ to estimate 
\begin{equation}\label{eq:truncation_bound2}
\begin{split}
 1- \sup_{x\in\T} \cos (\psi_j -\psi_k) & \leq \frac{1}{2}\, |\psi_j -\psi_k|^2 \Big |_{x=x^*} = \frac{1}{2}\, |\phi_j + j x^* - j t^2 - (\phi_k + k x^* - k t^2)|^2\\
 & = \frac{1}{2}\, | \phi_j +\phi_0^{\ast} - \phi_j^{\ast} - (\phi_k + \phi_0^{\ast}-\phi_k^{\ast})|^2 \\
 & = \frac{1}{2}\, |(\phi_j-\phi_j^{\ast})-(\phi_k-\phi_k^{\ast})|^2 \leq 2 \varepsilon^2.
 \end{split}
\end{equation}
The cases where $\{j,k\}\cap\{0,1\}\neq \emptyset$ satisfy the same estimate by construction. By \eqref{eq:truncation_bound1} and \eqref{eq:truncation_bound2},
\begin{align*}
\sup_{x\in\T}|u(t,x)|^2 \geq \sum_{k \in \Z} (c_k r_k)^2 + (1-2 \varepsilon^2) \sum_{j\neq k, |j|,|k|\leq m_2} c_j c_k r_j r_k - \sum_{(j,k)\in\mathcal{R}} c_j c_k r_j r_k
\end{align*}
Finally, one can rearrange the terms on the right-hand side to obtain \eqref{eq:truncation+error}.
\end{proof}

\noindent In view of \eqref{eq:condU1} and \eqref{eq:truncation+error}, if $\vartheta\in\mathcal{U}(\varepsilon,m_1,m_2)$, it follows that
\begin{equation}
    \sup_{x\in\T}|u(t,x)|^2 \geq (1-2\varepsilon^2)\, \left(\frac{\sum_{|k|\leq m_1(\varepsilon)}c_k^2}{\sum_{k\in\Z} c_k^2}\, z_0 \varepsilon^{-1/2}\right)^2 + E(\varepsilon,m_1,m_2)
\end{equation}
It is easy to check that such $\vartheta\in\mathcal{U}(\varepsilon,m_1,m_2)$ will live in $\mathcal{D}(t,z(\varepsilon)-a(\varepsilon))$ as long as the error is not too small, namely
\begin{equation}\label{eq:E_condition}
\begin{split}
E(\varepsilon,m_1,m_2) \geq [z_0\varepsilon^{-1/2}-a(\varepsilon)]^2- (1-2\varepsilon^2)\, \left(\frac{\sum_{|k|\leq m_1(\varepsilon)}c_k^2}{\sum_{k\in\Z} c_k^2}\, z_0 \varepsilon^{-1/2}\right)^2 =: f(\varepsilon).
\end{split}
\end{equation}

\noindent Our next goal is to show that the error satisfies this inequality with very high probability.

\begin{lem}\label{thm:error_dexp}
Suppose that $m_1$ and $m_2$ are chosen as in \Cref{thm:LDP_U}. Then for any $c>0$ the error term in \eqref{eq:truncation_error_positive} and \eqref{eq:truncation_error_negative} satisfies
\begin{equation}
\log \P \left( E(\varepsilon,m_1,m_2)) < -c\,\varepsilon \right) \lesssim - c\,\varepsilon\, \exp(b\,\varepsilon^{-\alpha}), \qquad \mbox{as}\ \varepsilon\rightarrow 0^{+}.
\end{equation}
%\red{Note: We could improve this by substituting $c\varepsilon$ by a power of $c\varepsilon^N$. Is that useful?}
\end{lem}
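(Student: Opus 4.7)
The plan is to reduce the statement to a tail bound for the negative part $E^-$ of the error. Since $m_1 \sim |\log \varepsilon|$ and $m_2 = \varepsilon^{-\alpha}$ give $m_1 \leq m_2$ for $\varepsilon$ small, \Cref{rk:ErrorIsPositive} yields $E^+ \geq 0$, so
\[
\{E < -c\varepsilon\} \subseteq \{E^- < -c\varepsilon\} = \{|E^-| > c\varepsilon\}.
\]
Using the algebraic identity $\sum_{j\neq k} c_j c_k r_j r_k = S^2 - \sum_j c_j^2 r_j^2$ with $S := \sum_j c_j r_j$, together with the analogous identity on the index range $\{|j|,|k| \leq m_2,\ j\neq k\}$, the terms in $|E^-|$ telescope to
\[
|E^-| = X^2 + 2XY - \sum_{|j|>m_2} c_j^2 r_j^2 \ \leq\ X^2 + 2XY,
\]
where $X := \sum_{|j|>m_2} c_j r_j$ and $Y := \sum_{|j|\leq m_2} c_j r_j$. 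The key structural observation is that $X$ and $Y$ are independent random variables, since they depend on disjoint collections of $\{r_j\}_{j \in \Z}$.

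Next, I would derive sharp tail bounds for $X$ and $Y$. Writing $X = c_{m_2+1}\,\tilde X$ with $\tilde X = \sum_{n\geq 0} e^{-bn}(r_{m_2+1+n} + r_{-(m_2+1+n)})$, one recognizes $\tilde X$ as a weighted sum of independent Rayleigh variables with geometrically decaying weights; it therefore has $O(1)$ mean and is sub-Gaussian with $O(1)$ variance proxy (both independent of $\varepsilon$). A Hoeffding-type bound then yields
\[
\P(X > t) \lesssim \exp\bigl(-c\,t^2\, e^{2bm_2}\bigr), \qquad \text{for } t \gtrsim e^{-bm_2}.
\]
Similarly, $Y$ has $O(1)$ mean and sub-Gaussian variance proxy $\lesssim \sum_k c_k^2 < \infty$, so $\P(Y > T) \lesssim \exp(-cT^2)$ for $T$ large.

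Finally, I would combine these via a union bound,
\[
\P(|E^-| > c\varepsilon) \leq \P\bigl(X^2 > \tfrac{c\varepsilon}{2}\bigr) + \P\bigl(2XY > \tfrac{c\varepsilon}{2}\bigr) \leq \P\bigl(X > \sqrt{c\varepsilon/2}\bigr) + \P(X > \delta) + \P\bigl(Y > \tfrac{c\varepsilon}{4\delta}\bigr),
\]
valid for any $\delta > 0$ since $X, Y \geq 0$. The balanced choice $\delta = \sqrt{c\varepsilon}\, e^{-bm_2/2}$ makes each of the three probabilities at most $\exp(-c'\, c\varepsilon\, e^{bm_2})$ as $\varepsilon \to 0^+$, by the tail bounds above (in fact the first term is much smaller, of order $\exp(-c\varepsilon\, e^{2bm_2})$). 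Since $bm_2 = b\varepsilon^{-\alpha}$, the conclusion follows. The main obstacle is achieving the exponent $e^{b\varepsilon^{-\alpha}}$ rather than $e^{b\varepsilon^{-\alpha}/2}$: a naive Chernoff bound treating $X$ as a sum of non-negative variables gives only an exponential tail $\exp(-t\,e^{bm_2})$, which after the AM--GM-style split above degrades the final rate to $\sqrt{c\varepsilon}\, e^{bm_2/2}$. Exploiting the sub-Gaussianity of $\tilde X$, rather than just its positivity, is precisely what recovers the quadratic tail with the full factor $e^{2bm_2}$ and ultimately the target decay.
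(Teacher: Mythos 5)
Your proof is correct, and it achieves the same doubly exponential rate as the paper's, but via a genuinely different argument. The paper runs a single Chernoff bound: $\P(E < -c\varepsilon) \leq e^{-\lambda c\varepsilon}\,\E[e^{-\lambda E}]$, splits $\E[e^{-\lambda E}]$ by Cauchy--Schwarz into $\E[e^{-2\lambda E^+}]^{1/2}\,\E[e^{-2\lambda E^-}]^{1/2}$, discards the first factor via $E^+\geq 0$, controls the second by the AM--GM inequality $2r_jr_k \leq r_j^2+r_k^2$ followed by the explicit squared-Rayleigh MGF $\E[e^{aR_k^2}]=(1-a)^{-1}$, and finally takes $\lambda \sim e^{bm_2}$. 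You also use $E^+\geq 0$ to reduce to a tail bound for $|E^-|$, but then proceed differently: the algebraic identity $|E^-| = X^2 + 2XY - \sum_{|j|>m_2}c_j^2r_j^2 \leq X^2 + 2XY$, with $X$ and $Y$ the high- and low-frequency parts of $\sum c_kr_k$, replaces the diagonalization-and-MGF step with sub-Gaussian (Hoeffding-type) tail bounds for $X$ and $Y$, combined through a balancing union bound at scale $\delta = \sqrt{c\varepsilon}\,e^{-bm_2/2}$. Both arguments rest on the Gaussian-type decay of the Rayleigh tail — the paper through the MGF of $R_k^2$, you through sub-Gaussianity of $R_k$ directly — and you correctly identify this as essential: positivity alone gives only an exponential tail for $X$ and a strictly worse doubly exponential rate, whereas the paper likewise needs to run Chernoff on the sub-exponential $R_k^2$ rather than on the merely positive $R_k$. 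One minor comment: the independence of $X$ and $Y$ that you flag as the key structural observation is never actually used, since your final union bound works for any $X,Y$ with the stated tails.
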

\begin{rk}\label{rk:error_dexp} Note that for $f$ as in \eqref{eq:E_condition}, we can choose $a(\varepsilon)$ in such a way that the following holds.
\[
\P \left( E(\varepsilon,m_1,m_2)) < f(\varepsilon) \right) \leq \P \left( E(\varepsilon,m_1,m_2)) \leq -c\varepsilon \right)
\]
This is due to the fact that:
\begin{equation}\label{eq:asymp_f}
    f(\varepsilon)\lesssim - \varepsilon^{2\beta-} \quad\mbox{for}\ \beta\in (0,1/2).
\end{equation}
In order to prove \eqref{eq:asymp_f}, we first write:
\begin{equation}\label{eq:error_f}
    \begin{split}
        f(\varepsilon) = &\ -2z_0\, \varepsilon^{-1/2}\, a(\varepsilon) + z_0^2 \varepsilon^{-1}\left( 1- \frac{\norm{(c_k)_{|k|\leq  m_1(\varepsilon)}}_{\ell^2_k}^4}{\norm{(c_k)_{k\in\Z}}_{\ell^2_k}^4}\right) \\
        & +a(\varepsilon)^2 + 2\varepsilon^2 \, \left(\frac{\sum_{|k|\leq m_1(\varepsilon)}c_k^2}{\sum_{k\in\Z} c_k^2}\, z_0 \varepsilon^{-1/2}\right)^2.
    \end{split}
\end{equation}
We want to choose $a(\varepsilon)$ in such a way that $f$ is eventually negative. To do so, we must identify the top order of \eqref{eq:error_f} as $\varepsilon\rightarrow 0^{+}$. First of all, note that the second summand in \eqref{eq:error_f} has size $\O (\varepsilon^{2\beta})$, since
\[
1- \frac{\norm{(c_k)_{|k|\leq  m_1(\varepsilon)}}_{\ell^2_k}^4}{\norm{(c_k)_{k\in\Z}}_{\ell^2_k}^4} = \norm{(c_k)_{|k|>m_1(\varepsilon)}}_{\ell^2_k}^2 \, \frac{\norm{(c_k)_{|k|\leq m_1(\varepsilon)}}_{\ell^2_k}^2+\norm{(c_k)_{k\in\Z}}_{\ell^2_k}^2}{\norm{(c_k)_{k\in\Z}}_{\ell^2_k}^4} \sim c_{m_1(\varepsilon)}^2 \cdot 1 \sim \varepsilon^{1+2\beta},
\]
which follows from \eqref{eq:m1} and the choice of $(c_k)_{k \in \Z}$. Similarly, the last summand in \eqref{eq:error_f} has size $\O (\varepsilon)$. As a result, we can choose $a(\varepsilon)=\varepsilon^{1/2+2\beta-}$, so that the first summand in \eqref{eq:error_f} dominates. Then \eqref{eq:asymp_f} follows right away.
\end{rk}
\begin{proof}
For $\lambda>0$ to be fixed, we can bound the desired probability using Markov inequality:
\begin{align}\label{eq:chernoff_error}
\P \left( E(\varepsilon,m_1, m_2)) \leq -c\,\varepsilon \right) & =\P \left( e^{-\lambda E(\varepsilon, m_1, m_2))} \geq e^{\lambda c \varepsilon} \right) \leq e^{-\lambda c\varepsilon}\, \E[e^{-\lambda E(\varepsilon,m_1, m_2)}]\nonumber\\
& \leq e^{-\lambda c \varepsilon}\, \E [e^{-2 \lambda E^+(\varepsilon,m_1, m_2)}]^{1/2}\, \E [e^{-2 \lambda E^-(\varepsilon,m_1, m_2)}]^{1/2}.
\end{align}
The last inequality follows from the Cauchy-Schwartz inequality, \eqref{eq:truncation_error_positive}, and \eqref{eq:truncation_error_negative}. 

Let us estimate the second factor in \eqref{eq:chernoff_error} by 1, since $E^{+}(\varepsilon,m_1,m_2)$ is always non-negative by \eqref{eq:truncation_error_positive} and \Cref{rk:ErrorIsPositive}. Next we estimate the third factor in \eqref{eq:chernoff_error}. We aim to show that it is uniformly bounded in $\varepsilon$ as long as we choose $\lambda$ correctly. To show this, we first bound the exponent:
\[
\begin{split}
2\lambda \sum_{(j,k)\in \mathcal{R}}c_j c_k r_j r_k & \leq \lambda \sum_{(j,k)\in\mathcal{R}} c_j c_k (r_j^2 + r_k^2) \\
& \leq 2\lambda \sum_{|k|\leq m_2(\varepsilon)} c_k r_k^2 \sum_{|j|>m_2(\varepsilon)}c_j  + 2\lambda \sum_{|k|>m_2(\varepsilon)} c_k r_k^2 \sum_{j\neq k}c_j \\
& = 4\lambda\, \frac{e^{-b(m_2(\varepsilon)+1)}}{1-e^{-b}} \,\sum_{|k|\leq m_2(\varepsilon)}c_k r_k^2 + 2\lambda\, \frac{1+e^{-b}}{1-e^{-b}} \, \sum_{|k|> m_2(\varepsilon)} c_k r_k^2.
\end{split}
\]
Using this bound and independence, we have that
\begin{align}\label{eq:ring_terms}
\E \left[e^{2\lambda \sum_{(j,k)\in \mathcal{R}}c_j c_k R_j R_k}\right] & \leq \E \left[ e^{4\lambda\, \frac{e^{-b(m_2(\varepsilon)+1)}}{1-e^{-b}} \,\sum_{|k|\leq m_2(\varepsilon)}c_k R_k^2 }\right] \, \E \left[ e^{2\lambda\, \frac{1+e^{-b}}{1-e^{-b}} \, \sum_{|k|> m_2(\varepsilon)} c_k R_k^2}\right]\nonumber\\
& = \prod_{|k|\leq m_2(\varepsilon)} \frac{1}{1-4\lambda\, \frac{e^{-b(m_2(\varepsilon)+1)}}{1-e^{-b}} c_k} \, \prod_{|k|>m_2(\varepsilon)} \frac{1}{1-2\lambda\, \frac{1+e^{-b}}{1-e^{-b}} c_k}.
\end{align}
%\footnote{Here we assume that $c_k=e^{-b|k|}$. If $c_k=e^{-b|k|^2}$ one can choose $\lambda = C \exp (bm^2)$ instead. We will not mention such small changes from now on.}

Let us bound the first factor in \eqref{eq:ring_terms}. Fix $\lambda = C \exp (bm_2(\varepsilon))=C\exp(b\varepsilon^{-\alpha})$ for some $C>0$ to be chosen soon. Since $|k|\leq m_2(\varepsilon)$, we have that $\lambda e^{-b(m_2(\varepsilon)+1)}\leq C e^{-b} < C$. Choose $C$ small enough so that 
\begin{equation}
\frac{4C}{1-e^{-b}} <\frac{1}{2}. 
\end{equation}
This choice guarantees that
\[
\log \left( \prod_{|k|\leq m_2(\varepsilon)} \frac{1}{1-4\lambda\, \frac{e^{-b(m_2(\varepsilon)+1)}}{1-e^{-b}} c_k}\right) \leq \sum_{|k|\leq m_2(\varepsilon)} \log \left( \frac{1}{1-c_k/2}\right)\leq \sum_{|k|\leq m_2(\varepsilon)} c_k \lesssim 1.
\]
Consequently, the first factor in \eqref{eq:ring_terms} is uniformly bounded in $\varepsilon$. We perform a similar analysis with the second factor in \eqref{eq:ring_terms}. By making $C$ smaller (if necessary), we can arrange:
\[ 
2\lambda\, \frac{1+e^{-b}}{1-e^{-b}} c_k = 2 C e^{b\,m_2(\varepsilon)}\, \frac{1+e^{-b}}{1-e^{-b}}\, e^{-b \,m_2(\varepsilon) -b \,(|k|-m_2(\varepsilon))} \leq \frac{1}{2} e^{-b (|k|-m_2(\varepsilon))}.
\]
As a consequence,
\[
\log \left(\prod_{|k|>m_2(\varepsilon)} \frac{1}{1-2\lambda\, \frac{1+e^{-b}}{1-e^{-b}} c_k}\right) \leq \sum_{|k|>m_2(\varepsilon)}\log \left( \frac{1}{1-  \frac{1}{2} e^{-b (|k|-m_2(\varepsilon))}}\right) \leq \sum_{|k|>m_2(\varepsilon)} e^{-b (|k|-m_2(\varepsilon))} \lesssim 1
\]
uniformly in $\varepsilon$. Therefore we have that all the terms in \eqref{eq:ring_terms} are uniformly bounded in $\varepsilon$.

\noindent This result, together with \eqref{eq:chernoff_error}, yields
\[
\P \left( E(\varepsilon,m)) < -c\,\varepsilon \right) \leq C_b\, e^{-c \lambda\varepsilon} \lesssim_b \exp (-c\,\varepsilon\exp (b\, \varepsilon^{-\alpha}))
\]
and the lemma follows.
\end{proof}

As we mentioned in \Cref{rk:error_dexp}, \Cref{thm:error_dexp} shows that our one-sided neighborhood of minimizers, $\mathcal{U}(\varepsilon,m_1,m_2)$, is almost entirely contained in the set 
\[
\mathcal{D} \left(t,z(\varepsilon) - \varepsilon^{\frac{1}{2}+2\beta-}\right)= \left\lbrace \vartheta\in\mathcal{H} : \sup_{x\in\T} |u(t,x\, |\,\vartheta)|\geq z_0 \varepsilon^{-1/2}  - \varepsilon^{\frac{1}{2}+2\beta-}\right\rbrace
\]
for $\beta\in (0,1/2)$ and $m_1$ and $m_2$ as in \eqref{eq:m1}. More precisely, we have shown that the difference between the two events is doubly exponentially unlikely, i.e. there exists some $c>0$ such that
\[
\P \left( \mathcal{U}(\varepsilon,m_1,m_2) -\mathcal{D} \left(t,z(\varepsilon) - \varepsilon^{\frac{1}{2}+2\beta-}\right)\right) \ll \exp(- \exp (c \varepsilon^{-\alpha})), \qquad \mbox{as}\ \varepsilon\rightarrow 0^{+}.
\]

We summarize the results of this section in the following theorem. For the sake of clarity, we only present a special case of our results where we fix $\alpha=1/2$ and $\beta=2/5$.

\begin{thm}\label{thm:concentration}
Consider the set $\mathcal{U}(\varepsilon)$ given by the $\vartheta = (r_k, \phi_k)_{k\in\Z}\in\mathcal{H}$ satisfying:
\begin{equation}
    r^*_k \leq r_k \leq r^*_k + \varepsilon^{2/5}, \qquad \mbox{for all}\ |k|\leq m_1(\varepsilon) = -\frac{1}{b} \log \left( \frac{2\sum_{j\in\Z} c_j^2}{z_0}\, \varepsilon^{9/10}\right),
\end{equation}
and
\begin{equation}
 |\phi_k -\phi_k^{\ast}|\leq \varepsilon, \qquad \mbox{for all}\ |k|\leq m_2(\varepsilon)=\varepsilon^{-1/2}\ \mbox{except $k=0,1$},
\end{equation}
where $r^*_k$ and $\phi_k^{\ast}=\phi_k^{\ast}(\phi_0,\phi_1)$ were defined in \eqref{eq:optimal_r_k} and \eqref{eq:MLE_phases}, respectively. Then $\mathcal{U}(\varepsilon)$ concentrates asymptotically as much probability as the set of rogue waves. Moreover, $\mathcal{U}(\varepsilon)$ is almost entirely contained in the set  $\mathcal{D} (t,z_0\varepsilon^{-1/2}-\varepsilon)$. More precisely,
\begin{equation}
\log \P \left( \mathcal{U}(\varepsilon) -\mathcal{D} (t,z_0\varepsilon^{-1/2} - \varepsilon)\right) \lesssim - \exp (c \varepsilon^{-1/2})) \qquad \mbox{as}\ \varepsilon\rightarrow 0^{+}.
\end{equation}
\end{thm}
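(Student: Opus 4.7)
The plan is to obtain the theorem as a direct specialization of the two technical pillars of this section, namely \Cref{thm:LDP_U} (the LDP for the neighborhoods $\mathcal{U}(\varepsilon, m_1, m_2)$) and \Cref{thm:error_dexp} (the doubly-exponential tail for the error $E(\varepsilon, m_1, m_2)$). The choices $\alpha = 1/2$ and $\beta = 2/5$ built into the definition of $\mathcal{U}(\varepsilon)$ lie strictly inside the admissible ranges $(0,1)$ and $(0,1/2)$ required by both results, so no new estimates are needed: the argument is essentially an assembly. For the first claim I would invoke \Cref{thm:LDP_U} verbatim with these parameters to obtain
\[
\lim_{\varepsilon \to 0^+} \varepsilon \log \P(\mathcal{U}(\varepsilon)) = -\frac{z_0^2}{\sum_{k\in\Z} c_k^2},
\]
which coincides with the LDP for the rogue-wave set $\mathcal{D}(t, z_0 \varepsilon^{-1/2})$ established in \Cref{prop:linearLDP}.

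For the containment statement I would fix $\vartheta = (r_k, \phi_k)_{k \in \Z} \in \mathcal{U}(\varepsilon)$. The phase condition \eqref{eq:condU2} places $(\phi_k)_{k \in \Z}$ inside the set $\mathcal{U}_2(\varepsilon, m_2)$ of the unlabeled lemma preceding \Cref{thm:error_dexp}, and inserting the modulus condition \eqref{eq:condU1} ($r_k \geq r_k^*$ for $|k| \leq m_1$) into \eqref{eq:truncation+error} yields
\[
\sup_{x \in \T} |u(t, x \,|\, \vartheta)|^2 \geq (1 - 2\varepsilon^2)\left(\frac{\sum_{|k| \leq m_1(\varepsilon)} c_k^2}{\sum_{k \in \Z} c_k^2}\, z_0 \varepsilon^{-1/2}\right)^2 + E(\varepsilon, m_1, m_2).
\]
Hence $\vartheta \in \mathcal{U}(\varepsilon) \setminus \mathcal{D}(t, z_0 \varepsilon^{-1/2} - \varepsilon)$ forces $E(\varepsilon, m_1, m_2) < f(\varepsilon)$, with $f$ as in \eqref{eq:E_condition} taken with $a(\varepsilon) = \varepsilon$.

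The remaining step is the asymptotic analysis of $f$, in the spirit of \Cref{rk:error_dexp}. Expanding $(z_0 \varepsilon^{-1/2} - \varepsilon)^2$ and using $1 - (\sum_{|k| \leq m_1} c_k^2 / \sum_k c_k^2)^2 \sim c_{m_1}^2 \sim \varepsilon^{9/5}$, which is a direct consequence of the explicit value $b m_1(\varepsilon) = \log\bigl(\tfrac{z_0}{2 \sum_j c_j^2} \varepsilon^{-9/10}\bigr)$, one obtains $f(\varepsilon) = -2 z_0 \varepsilon^{1/2} + \O(\varepsilon^{4/5})$, so $f(\varepsilon) \leq -\varepsilon$ for all sufficiently small $\varepsilon$. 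This gives the inclusion $\mathcal{U}(\varepsilon) \setminus \mathcal{D}(t, z_0 \varepsilon^{-1/2} - \varepsilon) \subseteq \{E(\varepsilon, m_1, m_2) < -\varepsilon\}$, and \Cref{thm:error_dexp} with $c = 1$ and $\alpha = 1/2$ then bounds its log-probability by $-\varepsilon \exp(b \varepsilon^{-1/2}) = -\exp(b \varepsilon^{-1/2} + \log \varepsilon)$, which in turn is at most $-\exp(c \varepsilon^{-1/2})$ for any $c < b$ and $\varepsilon$ small enough. The one delicate point is confirming that the negative cross-term $-2 z_0 \varepsilon^{1/2}$ from the binomial expansion dominates both the positive $\O(\varepsilon^{4/5})$ Fourier-truncation contribution and the positive $\O(\varepsilon)$ loss from $(1 - 2\varepsilon^2)$; this is precisely where the two-scale balance between $m_1 \sim |\log \varepsilon|$ and $m_2 \sim \varepsilon^{-1/2}$ in the construction of $\mathcal{U}(\varepsilon)$ is essential.
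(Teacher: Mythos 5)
Your proposal is correct and follows essentially the same route as the paper: the theorem is presented there as a summary specialization of the preceding development, and you assemble exactly the same ingredients — \Cref{thm:LDP_U} with $\alpha=1/2,\beta=2/5$ for the LDP, the unlabeled truncation lemma giving \eqref{eq:truncation+error}, the analysis of $f(\varepsilon)$ from \Cref{rk:error_dexp}, and \Cref{thm:error_dexp} — to conclude. Your direct computation $f(\varepsilon)=-2z_0\varepsilon^{1/2}+\O(\varepsilon^{4/5})$ with $a(\varepsilon)=\varepsilon$ is a clean shortcut compared to the paper's general $a(\varepsilon)=\varepsilon^{1/2+2\beta-}$ choice (from which the theorem follows a fortiori since $\mathcal{D}(t,z-\varepsilon)\supset\mathcal{D}(t,z-\varepsilon^{13/10-})$), and the absorption of the $\varepsilon$ prefactor into the double exponential is handled correctly.
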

\begin{rk}\label{rk:fromLineartoNLS}
The construction of the set $\mathcal{U}(\varepsilon)$ in \Cref{thm:concentration} is based on the linear Schr\"odinger equation. However, due to the method we used to prove \Cref{thm:mainLDP}, it is possible to adapt\footnote{Note that $(r_k^{\ast}, \phi^*_k)_{k \in \Z}$ may not solve the minimization problem associated with \eqref{eq:NLS_mainLDP} anymore, which is analogous to \eqref{eq:mainMinProblem}. Despite that, the proof of \Cref{thm:mainLDP} shows that the linear part of the equation dominates for such times, and therefore the $(r_k^{\ast}, \phi^*_k)_{k \in \Z}$ for the linear equation are a good approximation to the actual minimizers.} \Cref{thm:concentration} to handle subcritical times of the nonlinear equation \eqref{eq:NLS_mainLDP}. 

Regarding critical times, it is possible to construct an analogous set $\mathcal{U}_{\mathrm{app}}(\varepsilon)$ based on the resonant approximation in \eqref{eq:resonant_app}. To do so, one must change the definition of $\phi_k^{\ast}$ in \eqref{eq:MLE_phases} to include the additional terms appearing in the phases in \eqref{eq:resonant_app}, namely
\[
\phi_k^{\ast} :=\phi^*_0 - kx^* + k^2 t - \varepsilon^2 t\, (c_k r_k^{\ast})^2 \, \left(\text{mod } 2\pi\right).
\]
\end{rk}
\begin{rk}
Beyond \Cref{thm:concentration}, an interesting open question would be to investigate the difference $\mathcal{D} (t,z_0\varepsilon^{-1/2} -\varepsilon)-\mathcal{U}(\varepsilon)$. In particular, is it possible to prove that for some choice of $m_1$ and $m_2$ we have that 
\[
\P \left( \mathcal{D} (t,z(\varepsilon))- \mathcal{U}(\varepsilon,m_1,m_2)\right) \ll \P \left( \mathcal{D} (t,z(\varepsilon))\right) \quad \mbox{as}\ \varepsilon\rightarrow 0^{+}\, ?
\]
This would indicate that any rogue wave that doesn't belong to the set $\mathcal{U}$ is much more unlikely to happen than those we can characterize in $\mathcal{U}$. In order to prove such a result, one might need a better understanding of the asymptotic behavior of $\P \left( \mathcal{D} (t,z(\varepsilon))\right)$ beyond the top order (which is all we need in the proof of \Cref{thm:mainLDP}).
\end{rk}

\begin{rk}\label{rk:growth}
Another interesting open question is that of quantifying the growth of rogue waves. Suppose that we have a rogue wave of height $z>0$ at time $t>0$. What is the smallest (in the sense of $L^{\infty}_x$) initial datum $\vartheta\in\mathcal{H}$ that could have given rise to a wave of such height? And for such $\vartheta$, what is the growth coefficient? Mathematically, that would be
\[
\frac{\sup_{x\in\T} |u(t,x\, |\, \vartheta)|}{\sup_{x\in\T}  |u(0,x\, |\,\vartheta)|} = \frac{z}{\sup_{x\in\T}  |u(0,x\, |\, \vartheta)|}.
\]
We investigate this question numerically in the next subsection.
\end{rk}

\subsection{Numerical Examples}

\begin{figure}
    \centering
    \includegraphics[width=4.5in]{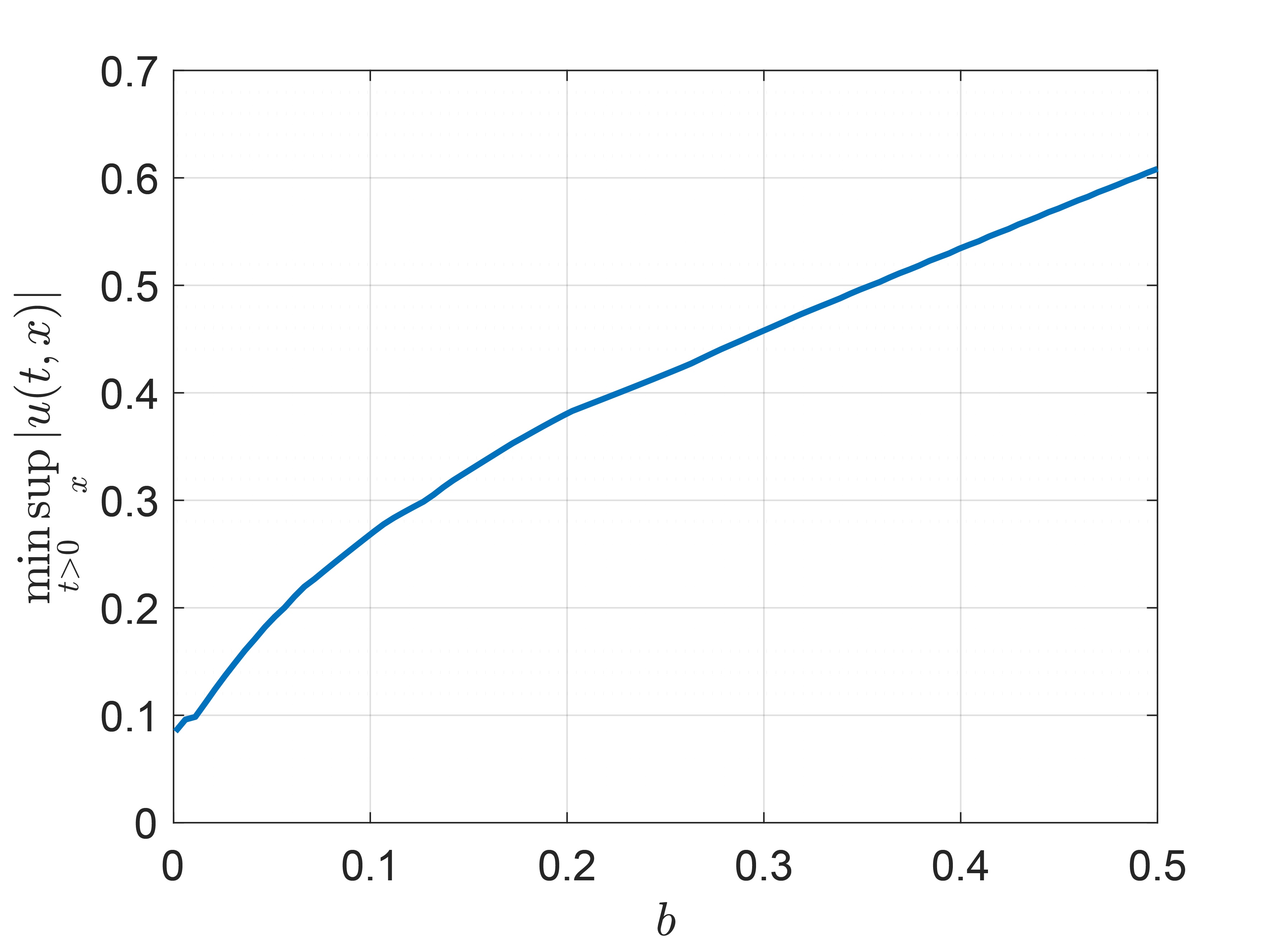}
    \caption{Dependence of $\mathrm{m}$ as defined in \eqref{eq:minimax_simulation} on $b$ for $u(t,x)$ given by \eqref{eq:sol_simulation} where $n=100$, $z=1$, ${0\leq t\leq 2\pi}$, ${-\pi\leq x\leq \pi}$, and using $N=2500$ grid points each for $x$ and $t$ (i.e., $\Delta t=\Delta x\approx 2.5\times 10^{-3}$). The simulations consider $100$ $b$-values in the range $10^{-3}\leq b\leq 5\times 10^{-1}$ with $\Delta b\approx 5\times 10^{-3}$.}
    \label{fig:mvsb}
\end{figure}

\begin{figure}
    \centering
    \includegraphics[width=5.5in]{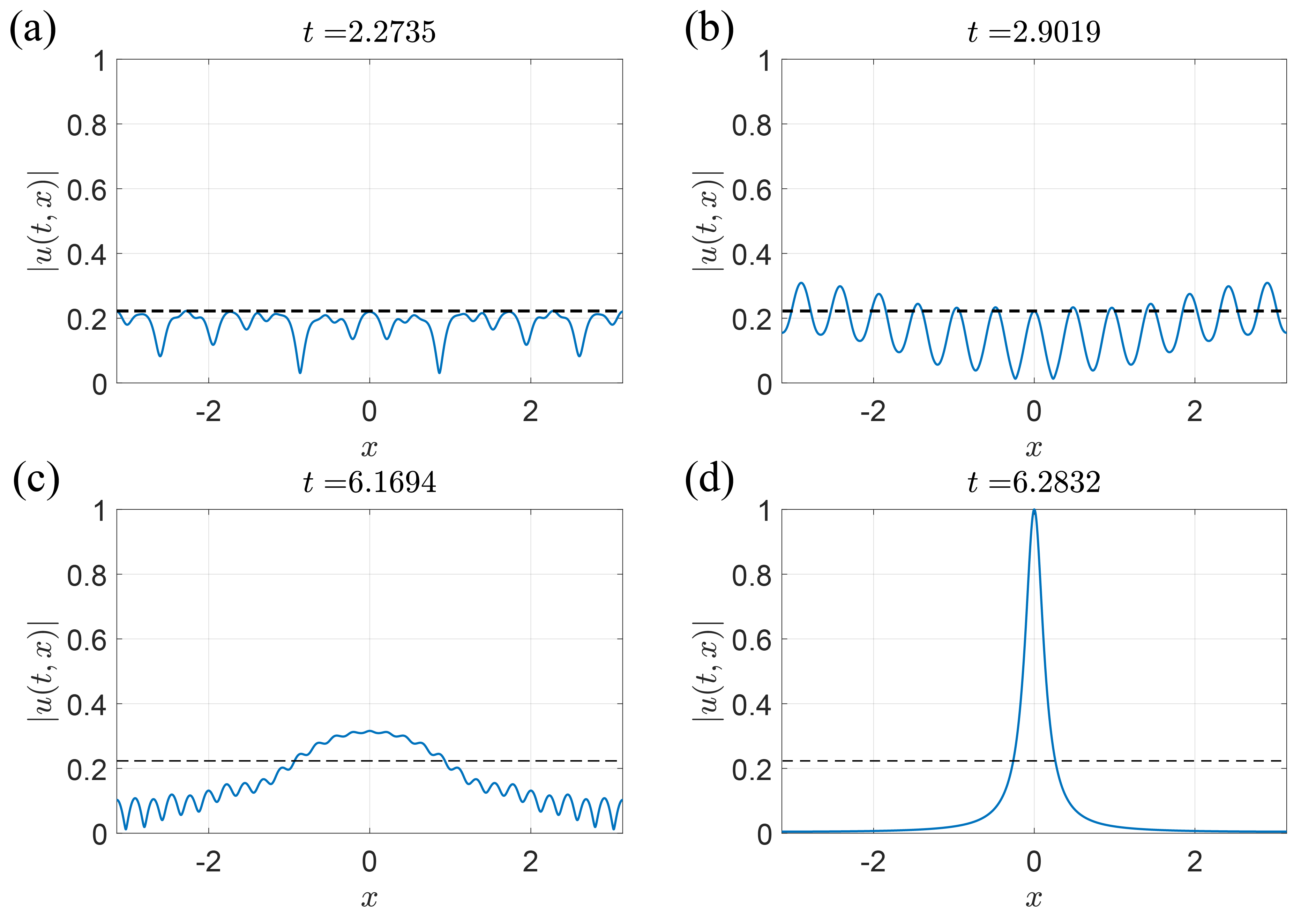}
    \caption{Snapshots of numerical simulations of \eqref{eq:sol_simulation} at times: (a) $t\approx2.2735$, (b) $t\approx2.9019$, (c) $t\approx6.1694$, and (d) $t\approx6.2832$. The solution was simulated for ${0\leq t\leq 2\pi}$, ${-\pi\leq x\leq \pi}$ using $z=1$, $b=0.07$, $n=100$, and $N=10^4$ grid points each for $x$ and $t$ (i.e., $\Delta t=\Delta x\approx 6.28\times 10^{-4}$). The dashed line in each image represents the value $\displaystyle{\mathrm{m}\approx 0.2225}$ as defined in \eqref{eq:minimax_simulation}, which is obtained in (a).}
    \label{fig:snapshots}
\end{figure}

In this section we investigate the question of maximum possible growth of rogue waves as described in \Cref{rk:growth}. The main question is the following: which is the smallest initial datum $\vartheta\in\mathcal{H}$ that gives rise to a rogue wave of height $z>0$ at time $t_0>0$. This smallness is measured in terms of the initial height: $\sup_{x\in\T} |u(0,x\,|\, \vartheta)|$.

In the case of the linear Schr\"odinger equation, we focus on the minimizers obtained in \Cref{prop:existence_minimizer}. Using the symmetries of the Schr\"odinger equation, we may assume that the peak of the rogue wave happens at $x^{\ast}=0$, with phase $\phi_0^{\ast}=0$, at time $t=0$. Moreover, we may normalize the height so that $z=1$. This initial datum is given by
\begin{equation}\label{eq:datum_simulation}
u_0^{\ast}(x)= \sum_{k\in\Z} \frac{c_k^2}{\sum_{j\in\Z} c_j^2} \, e^{ikx}.
\end{equation}

Then the question of maximum possible growth is equivalent to finding the following quantity:
\begin{equation}\label{eq:minimax_simulation}
\mathrm{m}=\min_{t\geq 0} \sup_{x\in\T} |u(t,x)|.
\end{equation}

We conduct numerical simulations to find \eqref{eq:minimax_simulation} in the case where $u(t,x)$ is the solution to the linear Schr\"odinger equation with initial datum \eqref{eq:datum_simulation}. To do so, we must truncate the series in \eqref{eq:datum_simulation}, so we consider
\begin{equation}\label{eq:datum_simulation2}
u_0^{\ast}(x)= \sum_{|k|\leq n} \frac{c_k^2}{\sum_{|j|\leq n} c_j^2} \, e^{ikx}
\end{equation}
and the associated solution
\begin{equation}\label{eq:sol_simulation}
u(t,x)= \sum_{|k|\leq n} \frac{c_k^2}{\sum_{|j|\leq n} c_j^2} \, e^{ikx-it|k|^2}.
\end{equation}

We were first interested in exploring the dependence of growth on the constant $b$ appearing in the exponent of $c_k$ in \eqref{eq:intro_data}. We therefore computed $u(t,x)$ from \eqref{eq:sol_simulation}
with $n=100$, $-\pi\leq x\leq \pi$, and $0\leq t\leq 2\pi$ using 2500 data points each for $x$ and $t$ (i.e., $\Delta t=\Delta x\approx 2.5\times 10^{-3}$). We then computed $\mathrm{m}$ in \eqref{eq:minimax_simulation} for $100$ values of $b$ in the range $10^{-3}\leq b\leq 5\times 10^{-1}$ and note that $\mathrm{m}=\mathrm{m}(b)$ generally decreases as $b$ is decreased (Figure \ref{fig:mvsb}). Note that Figure \ref{fig:mvsb} seems to break down as $b\to 0^+$. We suspect that this is due to the use of $n=100$. Equations \eqref{eq:condU1} and \eqref{eq:m1} seem to indicate that one needs to control a certain number of Fourier modes for the truncation error to stay small. This number grows proportionally to $1/b$, so we might expect that for $b\lesssim 0.01$ (up to an unknown constant) this number would need to be larger than 100.

As a demonstration of this growth, we chose $b=0.07$ and used a refined discretization of $\Delta t=\Delta x\approx 6.28\times 10^{-4}$ to capture dynamics on fast time scales. This computation yields $\mathrm{m}\approx 0.2225$, representing a wave whose height can grow by a factor of approximately $4.5$. Snapshots of the simulation are shown in Figure \ref{fig:snapshots}. The dashed black line indicates the value of $\mathrm{m}$, which is obtained in Figure \ref{fig:snapshots}(a).

\bibliographystyle{amsplain}
\bibliography{references}
\end{document}